\theoremstyle{plain}
\newtheorem{theorem}{Theorem}[section]
\newtheorem{lemma}{Lemma}[section]
\numberwithin{equation}{section}
\newcommand{\p}{\partial}
\newcommand{\rr}{\mathbb{R}}
\newcommand{\zz}{\mathbb{Z}}
\newcommand{\ci}{\mathbb{T}}
\newcommand{\wh}{\widehat}
\newcommand{\ee}{\varepsilon}
\newcommand{\vp}{\varphi}
\newcommand{\s}{\sigma}
\title{The Cauchy problem for the integrable RZQ equation}
\author{John Holmes, Katie Massey, and Ryan C. Thompson}
\date{May 2025}
\begin{document}
\begin{abstract}
    In this paper we study a new integrable fifth-order Camassa–Holm (CH)–type equation derived by Reyes, Zhu, and Qiao \cite{rzq}, which we call the RZQ equation. The m-form of this equation possesses a striking similarity to the m-form of the CH equation. However, unlike the CH equation, the nonlocal form of this equation cannot be interpreted as a nonlocal perturbation of Burgers' equation. 

    We prove that the initial value problem corresponding to the RZQ equation is well-posed in the sense of Hadamard, in Sobolev spaces $H^s$, $s>7/2$. We further show that the data-to-solution map is not uniformly continuous in the $H^s$ topology, though it is H\"older continuous in a weaker topology.  The initial value problem corresponding to the RZQ equation is ill-posed in $H^s$ for $s<7/2$.
\end{abstract} 
\maketitle

\section{Introduction}
  In this paper we study the following initial value problem (ivp) for the fifth order integrable  Camassa-Holm (CH) type equation, derived recently by Reyes, Zhu, and Qiao \cite{rzq} as in   \cite{qr}, which we shall hereafter refer to as the RZQ equation
\begin{align}\label{qrequation}
&m_t+vm_x+2v_xm=0, \quad t\in \mathbb R\\
&u(x,0)=u_0(x), \quad x\in \mathbb T \text{ or } \mathbb R, 
\end{align}  
where $m = \lambda ^2v $, $v = \lambda ^2u $, and $\lambda ^2= (1-\partial_x^2) $.  
By letting 
$\displaystyle \lambda^{-k} u  = \mathcal{F}^{-1}\left( \frac{\mathcal{F}(u)}{(1+\xi^2)^{k/2}}\right)$, $k \in \mathbb R$, where $\mathcal{F}$ is the Fourier transform operator and $\mathcal{F}^{-1}$ its inverse, we can write the ivp for the RZQ equation in the following nonlocal form
\begin{equation}\label{QR-nl}  
\begin{cases}
  u_t  + \lambda^{-2} [ \lambda^2 u \lambda ^2u_x ] 
  = 
  - \lambda^{-4} [   \lambda^2 u_x \lambda ^2u_{xx}  + 2 
 \lambda ^2  u \lambda^2 u_x   ]   
 \\
 u(x,0) = u_0(x).
 \end{cases}
\end{equation}

We seek to show that the RZQ equation is well-posed in the sense of Hadamard in Sobolev spaces $H^s$ for $s>7/2$.  Furthermore, we show that the continuity of the data-to-solution map is sharp.  Indeed, we find that while the mapping is continuous, it is not uniformly continuous.  Finally, we show that the RZQ equation is ill-posed in $H^s$ for $s<7/2$ on the real line.

Note that if we replace $v$ with $u$ in equation \eqref{qrequation}, we obtain the celebrated CH equation which first appeared in \cite{fuchssteiner1981} and models unidirectional propagation of shallow water waves over a flat bottom and arises as a bi-Hamiltonian integrable system with connections to geodesic flow on diffeomorphism groups \cite{camassa1993, const4, misiolek1998}.  We know the CH equation to be completely integrable with many useful conservation laws \cite{len1, mc1}.  Furthermore, local well-posedness has been shown in Sobolev spaces $H^s$ for $s>3/2$ and blow-up scenarios have been found \cite{const2, const3, jnz1, lo1, mc1}.  Global existence of solutions was also proved in \cite{bressan1, bressan2, jzz1}.  It is also worth noting that, like the RZQ equation, the CH equation has a corresponding nonlocal form
\begin{equation}\label{CH-nonloc}
    u_t+uu_x = -\lambda ^{-2} \p_x\left[u^2+\frac12u_x^2\right].
\end{equation}
It is from this form that we may see one of the most striking features of the CH equation which are solitary wave, or ``peakon," solutions of the form 
\begin{align}\label{ch-peakon}
    u(x,t) = ce^{-|x-ct|}.
\end{align}    These are weak solutions with a peaked profile and numerically equivalent wave speed and amplitude \cite{camassa1993}.  The orbital stability of these solutions was proven in \cite{const5}.

Recently, there have been explorations into higher order generalizations of the Camassa-Holm equation \cite{coc1, ding1, ding2, han1, holm1, mcl1, tang1, tian1, wang1, wang2}.  One that is of interest is the fifth order Camassa-Holm (FOCH) model \cite{lq1}
\begin{equation}\label{foch}
    \begin{cases}
        m_t+um_x+bu_xm = 0, \ \ \ \ \ \ \ \ \ \ \ \ t>0, \ x\in \rr, \\
        m=(1-\alpha^2\p_x^2)(1-\beta^2\p_x^2)u, \ \ \ t>0, \ x \in \rr,
    \end{cases}
\end{equation}
where $b\in\rr$ is a constant, $\alpha,\beta \in \rr$ are two parameters, $\alpha \neq 0$, $\alpha \beta \neq 0$.  Note that by letting $b=2$, $\alpha=1$, and $\beta=0$ we obtain the CH equation.  This model was developed by Liu and Qiao \cite{lq1} where they found explicit single pseudo-peakon, two-peakon, and $N$-peakon solutions along with their dynamical interactions.  The \textit{pseudo-peakons} are bounded weak traveling wave solutions with continuous first and second derivatives but whose higher order derivatives blow up \cite{qr}. In particular, the pseudo-peakon solution takes the form 
\begin{align}\label{qr-peakon}
     u(x,t) = ce^{-|x-ct|}(1+|x-ct|) .
\end{align}
\begin{figure}[h]
\includegraphics[width=6cm]{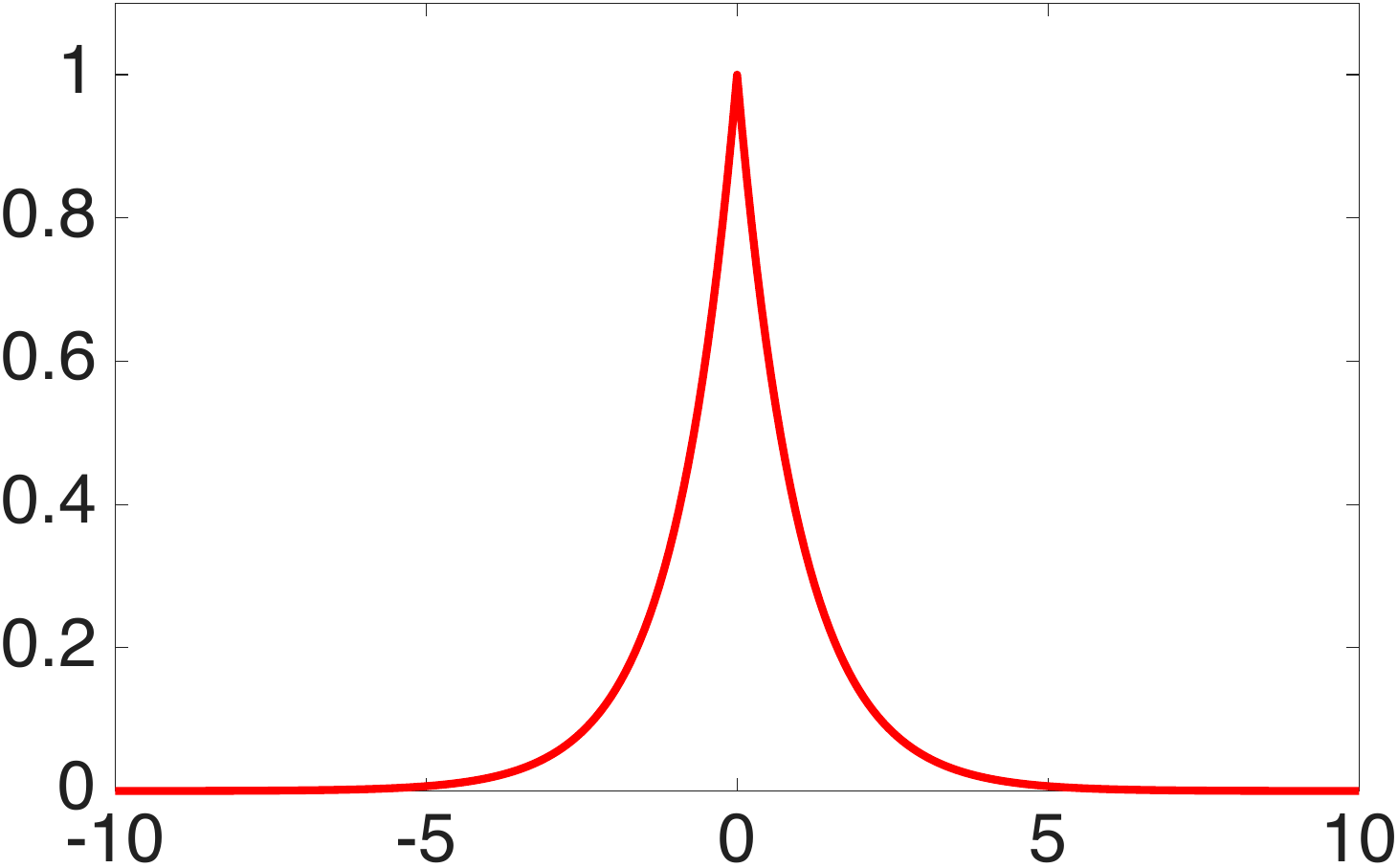}\hspace{ 0.5in}
\includegraphics[width=6cm]{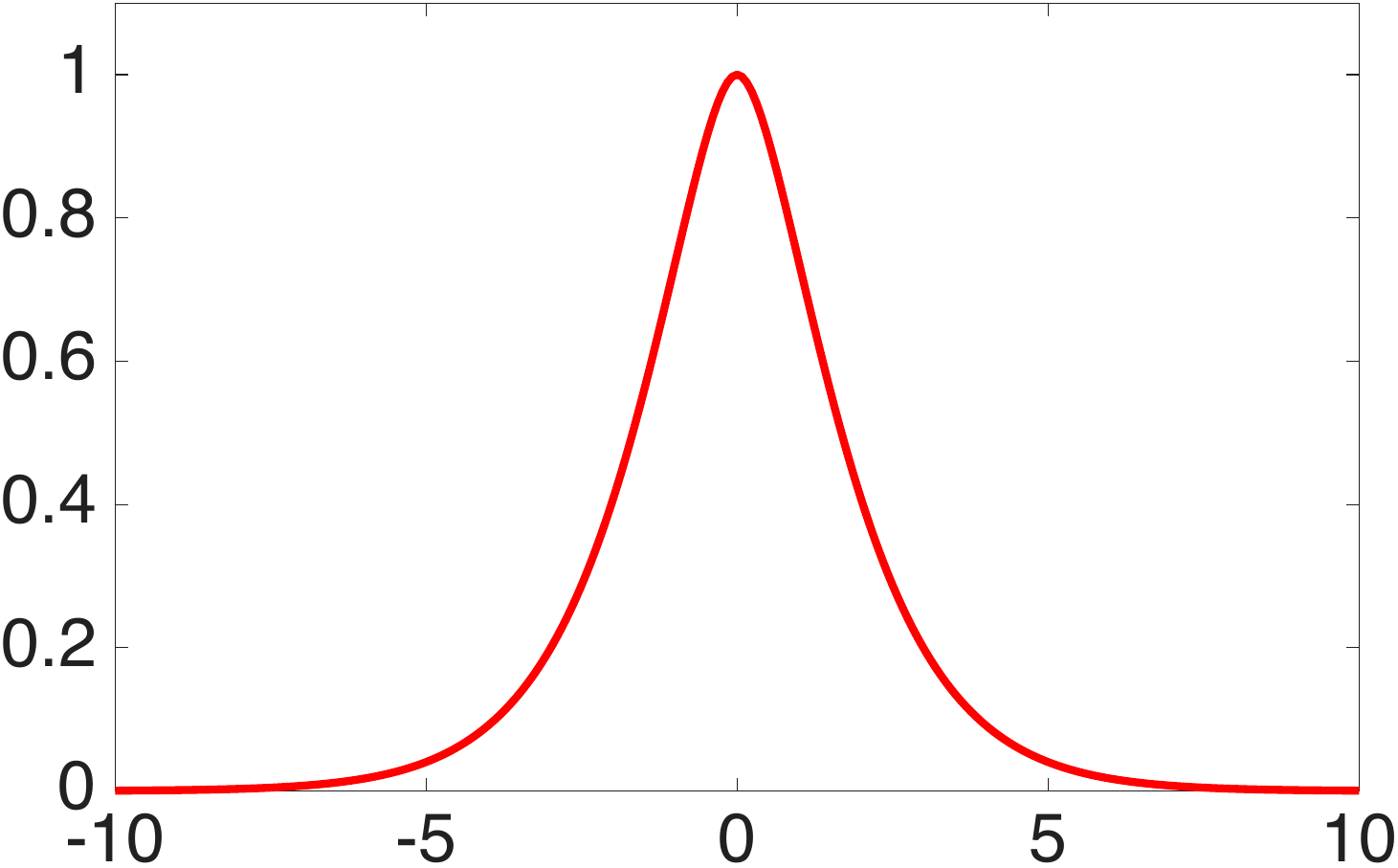}
\centering
\caption{Left: CH peakon \eqref{ch-peakon}; right: is the RZQ pseudo-peakon \eqref{qr-peakon}.}
\end{figure}

For the case where $\alpha=\beta=1$, local well-posedness was established in Sobolev spaces $H^s$ for $s>7/2$ and it was shown that solutions do not exhibit finite-time blow-up \cite{mcl1}.  For the case where $b=2$ and $m=u-u_{xx}+u_{xxxx}$, local well-posedness in Sobolev spaces $H^s$ for $s>9/2$ was shown via use of Kato's theory \cite{tian1}.  It was in \cite{rzq} and again in \cite{qr} that Qiao and Reyes extrapolated the RZQ equation \eqref{qrequation} from the FOCH equation \eqref{foch} in pursuit of other higher order Camassa-Holm type equations that generate the pseudo-peakon solutions.  The RZQ equation is completely integrable, possessing a Lax pair, and infinitely many conserved quantities \cite{qr}.  In \cite{rzq}, local well-posedness was established in Sobolev spaces $H^s(\rr)$ for $s>7/2$ using Kato's theory and finite-time blow up was obtained under certain initial conditions.  In this paper, we establish well-posedness in similar Sobolev spaces in both the periodic and non-periodic cases but by using a completely different approach that also gives us a solution size estimate utilized in our subsequent results.

\begin{theorem}\label{WP}
In both the periodic and non-periodic cases, the initial value problem for the RZQ equation is locally well-posed in $H^s$ for $s>7/2$.  
Furthermore, there exists a $T>0$ which depends only upon the size of the initial data, $\|u_0\|_{H^s}$, and $s$, such that for all $0\le t \le T$, the solution satisfies
$$
\| u(t) \|_{H^s} \le 2 \| u_0\|_{H^s}. 
$$
\end{theorem}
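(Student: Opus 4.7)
The plan is to establish well-posedness by a Friedrichs--mollifier (Bona--Smith / Kato--Lai) construction combined with direct $H^s$ energy estimates performed on the non-local form \eqref{QR-nl}. This approach treats the periodic and non-periodic settings simultaneously and produces the factor-of-two size bound as a by-product of the main estimate.

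For each $\epsilon>0$ let $J_\epsilon$ denote a standard Friedrichs mollifier and let $F(u)$ denote the right-hand side of \eqref{QR-nl}. The regularized initial value problem $\p_t u^\epsilon = J_\epsilon F(J_\epsilon u^\epsilon)$, $u^\epsilon(0)=u_0$, is a locally Lipschitz ODE on $H^s$ and hence admits a maximal smooth solution by Picard. The heart of the proof is the $\epsilon$-uniform differential inequality
\[
\frac{d}{dt}\|u^\epsilon(t)\|_{H^s}^2 \le C_s\,\|u^\epsilon(t)\|_{H^s}^3,
\]
from which $\|u^\epsilon(t)\|_{H^s}\le \|u_0\|_{H^s}/(1-\tfrac12 C_s t\|u_0\|_{H^s})$ follows by integration; choosing $T=1/(C_s\|u_0\|_{H^s})$ then yields $\|u^\epsilon(t)\|_{H^s}\le 2\|u_0\|_{H^s}$ on $[0,T]$, uniformly in $\epsilon$.

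To derive this estimate I rewrite the nonlinearity in the symmetric form
\[
u_t + \tfrac12\lambda^{-2}\p_x(v^2) + \lambda^{-4}\p_x(v^2) + \tfrac12\lambda^{-4}\p_x(v_x^2) = 0,\qquad v=\lambda^2 u,
\]
which follows from \eqref{QR-nl} since $2vv_x=\p_x(v^2)$ and $2v_xv_{xx}=\p_x(v_x^2)$. Pairing with $u$ in $H^s$ and using $\lambda^s u=\lambda^{s-2}v$, the two $\lambda^{-4}$ contributions are controlled by a direct product estimate: since $s>7/2$ the space $H^{s-3}$ is a Banach algebra, so
\[
\|\lambda^{-4}\p_x(v^2)\|_{H^s}+\|\lambda^{-4}\p_x(v_x^2)\|_{H^s}\lesssim \|v^2\|_{H^{s-3}}+\|v_x^2\|_{H^{s-3}}\lesssim \|v\|_{H^{s-2}}^2=\|u\|_{H^s}^2,
\]
and Cauchy--Schwarz gives each contribution $\lesssim \|u\|_{H^s}^3$. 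The $\lambda^{-2}\p_x(v^2)$ term, however, does not close by a direct product bound, which would require $v^2\in H^{s-1}$ and so a net loss of one derivative. Here I decompose
\[
\lambda^{s-2}(vv_x)=v\,\lambda^{s-2}v_x+[\lambda^{s-2},v]v_x,
\]
so that the diagonal contribution becomes $2\int v\,w_x\,w=-\int v_x\,w^2$ with $w=\lambda^{s-2}v$, bounded by $\|v_x\|_{L^\infty}\|u\|_{H^s}^2$, while the commutator piece is controlled by the Kato--Ponce estimate $\|[\lambda^{s-2},v]v_x\|_{L^2}\lesssim \|v_x\|_{L^\infty}\|v\|_{H^{s-2}}$. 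The embedding $H^s\hookrightarrow C^3$, valid precisely for $s>7/2$, then bounds $\|v_x\|_{L^\infty}=\|u_x-u_{xxx}\|_{L^\infty}\lesssim \|u\|_{H^s}$, so this term is also $\lesssim \|u\|_{H^s}^3$.

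With the uniform $H^s$ bound in hand, a parallel difference estimate on $u^\epsilon-u^{\epsilon'}$ in a lower Sobolev norm shows that $\{u^\epsilon\}$ is Cauchy in $C([0,T];H^{s'})$ for every $s'<s$; interpolation with the uniform $H^s$ bound upgrades this to convergence in $C([0,T];H^s)$. The limit $u$ solves \eqref{QR-nl} and inherits $\|u(t)\|_{H^s}\le 2\|u_0\|_{H^s}$. Uniqueness follows from the same difference estimate applied to any two $H^s$ solutions, and continuous dependence of the data-to-solution map is obtained by the standard Bona--Smith trick of approximating arbitrary $H^s$ data by smooth data. The principal obstacle throughout is the $\lambda^{-2}\p_x(v^2)$ term: its two orders of smoothing are insufficient for a direct product estimate in $H^s$, and only the commutator cancellation, together with the $C^3$-embedding that kicks in exactly at $s=7/2$, prevents a spurious loss of one derivative.
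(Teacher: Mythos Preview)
Your energy estimate is correct and is essentially what the paper does: the paper also mollifies the equation, passes $J_\epsilon$ across the inner product, applies Kato--Ponce to the commutator $[\lambda^{s-2},v]v_x$, integrates the diagonal piece by parts, and uses the algebra property on the $\lambda^{-4}$ terms. Your identification of the $\lambda^{-2}\partial_x(v^2)$ term as the one forcing $s>7/2$ via the $C^3$ embedding is exactly right.

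There is, however, a genuine gap in the passage to the limit. The sentence ``interpolation with the uniform $H^s$ bound upgrades this to convergence in $C([0,T];H^s)$'' is false as stated: a sequence that is Cauchy in $H^{s'}$ and merely \emph{bounded} in $H^s$ is Cauchy in every $H^{s''}$ with $s''<s$, but need not be Cauchy in $H^s$ itself (take $f_n=n^{-s}e^{inx}$ on the torus). With your choice $u^\epsilon(0)=u_0$ you have no extra smoothness to trade, so interpolation alone cannot close at the top index. The paper deals with this in two steps: it takes mollified data $u^\epsilon(0)=J_\epsilon u_0$, extracts a weak-$*$ limit $u\in L^\infty([0,T];H^s)$ via Banach--Alaoglu, obtains strong convergence only in $C([0,T];H^{s-\sigma})$, and then \emph{separately} upgrades the limit to $C([0,T];H^s)$ by proving that $t\mapsto u(t)$ is weakly continuous in $H^s$ and that $t\mapsto\|u(t)\|_{H^s}$ is Lipschitz (the latter via a mollified energy estimate on the actual solution). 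If you prefer a genuine Bona--Smith argument, you must also take $u^\epsilon(0)=J_\epsilon u_0$ and exploit the quantitative rates $\|J_\epsilon u_0\|_{H^{s+1}}\lesssim\epsilon^{-1}\|u_0\|_{H^s}$ and $\|J_\epsilon u_0-J_{\epsilon'}u_0\|_{H^{s-1}}\lesssim\max(\epsilon,\epsilon')\|u_0\|_{H^s}$, interpolating between $H^{s-1}$ (with a rate of decay) and $H^{s+1}$ (with a controlled rate of growth) to get Cauchy in $H^s$. Either route works, but the step as you wrote it does not.
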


To establish this result, we apply a Galerkin-type approximation argument after   mollifying the term $\lambda^{-2} [ \lambda^2u \lambda^2 u_x ]$ and studying the corresponding mollified Cauchy problem.  We are then able to extract a solution utilizing the fundamental existence theorem for ordinary differential equations in Sobolev spaces $H^s$.  By further refining our mollified sequences, we may pass through a convergent subsequence that guarantees us a solution to the original ivp \eqref{QR-nl}.

We also show that the continuity of data-to-solution map is sharp by proving that it is not better than continuous.  The proof is based on the method of approximate solutions for CH-type equations in Sobolev and related spaces found in \cite{himhol, himonas2007, himonas2009euler, himonas2010cauchy, hkt, holmes2017fornbergwhitham, holmes2018huntersaxton, holmes2021rbfamily, holmes2021forq, keytig, kotz, li2020nonuniform, li2021nonuniform, li2020generalized, lyz,  thompson2013periodic2ch}.

\begin{theorem}\label{nonuniform}
In both the periodic and non-periodic cases, the data-to-solution map is not uniformly continuous from $H^s$ to $C([0, T]; H^s)$. 
\end{theorem}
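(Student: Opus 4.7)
The plan is the approximate-solutions method of Himonas and coauthors, which has been applied to numerous Camassa--Holm-type equations (as cited just before the theorem). I exhibit two sequences $u^{1,n}$ and $u^{-1,n}$ of two-scale functions whose initial data approach each other in $H^s$ while their values at some positive time $t\in(0,T]$ remain separated, and then use Theorem \ref{WP} to transport this obstruction from the approximate solutions to the true solutions of \eqref{QR-nl}.

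In the periodic case I would try the ansatz
$$
u^{\omega,n}(x,t)=\omega\,n^{-1}+n^{-s}\cos(nx-\omega t),\qquad \omega\in\{-1,+1\},\ n\in\nn.
$$
Decompose $u^{\omega,n}=u_L+u_H$ into the constant low piece $u_L=\omega n^{-1}$ and the oscillation $u_H$. Since $\lambda^2 u_L=u_L$ and $(u_L)_x=0$, the leading contribution to $\lambda^{-2}[\lambda^2 u\,\lambda^2 u_x]$ is $u_L\,u_{H,x}=-\omega n^{-s}\sin(nx-\omega t)$, which exactly cancels $u^{\omega,n}_t$. The self-interaction $\lambda^{-2}[\lambda^2 u_H\,\lambda^2 u_{H,x}]$ and every term on the right of \eqref{QR-nl} evaluated at $u^{\omega,n}$ produce oscillations at frequency $\sim 2n$ of amplitude at most $\sim n^{3-2s}$, so the error
$$
E^{\omega,n}:=u^{\omega,n}_t+\lambda^{-2}[\lambda^2u^{\omega,n}\lambda^2u^{\omega,n}_x]+\lambda^{-4}\!\left[\lambda^2u^{\omega,n}_x\lambda^2u^{\omega,n}_{xx}+2\lambda^2u^{\omega,n}\lambda^2u^{\omega,n}_x\right]
$$
obeys $\|E^{\omega,n}\|_{H^{s}}\lesssim n^{3-s}\to 0$ whenever $s>7/2$. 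On $\rr$, where the constant $\omega n^{-1}$ is not in $H^s$, I would replace $u_L$ by a localized profile $\omega n^{-1}\vp(x/n^{\ee})$ (with $\vp$ Schwartz) and damp the oscillating part by a second slowly spreading bump; the cancellations above persist up to remainders controlled by derivatives of the localizers, which can be made arbitrarily small.

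By Theorem \ref{WP}, each $u^{\omega,n}(\cdot,0)$ generates an exact solution $U^{\omega,n}\in C([0,T];H^s)$ on a common interval $[0,T]$ depending only on $\sup_n\|u^{\omega,n}(\cdot,0)\|_{H^s}<\infty$, with the size bound $\|U^{\omega,n}\|_{L^\infty_T H^s}\le 2\|u^{\omega,n}(\cdot,0)\|_{H^s}$. The difference $w^{\omega,n}:=U^{\omega,n}-u^{\omega,n}$ solves a linearization of \eqref{QR-nl} with source $-E^{\omega,n}$ and coefficients built from $U^{\omega,n}$ and $u^{\omega,n}$. Running the same Kato--Ponce/commutator machinery that drives Theorem \ref{WP}, one first obtains an energy estimate for $w^{\omega,n}$ in a slightly lower norm $H^{\sigma}$ with $7/2<\sigma<s$, giving $\|w^{\omega,n}\|_{L^\infty_T H^{\sigma}}\lesssim \|E^{\omega,n}\|_{L^1_T H^{\sigma}}\to 0$; interpolating with the uniform $H^s$ bound from Theorem \ref{WP} then yields $\|w^{\omega,n}\|_{L^\infty_T H^s}\to 0$.

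The conclusion is immediate: in the periodic case
$$
u^{1,n}(\cdot,t)-u^{-1,n}(\cdot,t)=\tfrac{2}{n}+2n^{-s}\sin(nx)\sin t,
$$
so $\|u^{1,n}(\cdot,0)-u^{-1,n}(\cdot,0)\|_{H^s}=O(n^{-1})\to 0$ while $\|u^{1,n}(\cdot,t)-u^{-1,n}(\cdot,t)\|_{H^s}\ge c|\sin t|$ for some $c>0$ and all $n$ large, for every $t\in(0,T]$. Combined with $\|U^{\omega,n}-u^{\omega,n}\|_{L^\infty_T H^s}\to 0$, the triangle inequality delivers the failure of uniform continuity for the data-to-solution map. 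The main obstacle is the energy estimate for $w^{\omega,n}$: the RZQ equation sits at the sharp regularity $H^{7/2+}$ and several bilinear terms lose up to three derivatives even after the smoothing $\lambda^{-2}$ or $\lambda^{-4}$, so the delicate commutator cancellations used for Theorem \ref{WP} must be carefully rearranged for the difference of an exact and an approximate solution, and then supplemented with the interpolation step in order to reach the full $H^s$ norm.
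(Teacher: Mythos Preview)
Your overall strategy matches the paper's: approximate solutions $u^{ap}_{\omega,n}=\omega n^{-1}+n^{-s}\cos(nx-\omega t)$ in the periodic case, low-high localized versions on $\rr$, an $H^\sigma$ energy estimate for the difference $w=U^{\omega,n}-u^{ap}_{\omega,n}$, and then an interpolation to $H^s$. However, the interpolation step as you describe it does not work, and this is a genuine gap.

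You propose to interpolate between the decay $\|w\|_{H^\sigma}\to 0$ and the \emph{uniform $H^s$ bound} from Theorem~\ref{WP}. But interpolating between $H^\sigma$ and $H^s$ only controls intermediate norms $H^r$, $\sigma\le r<s$; it cannot produce $\|w\|_{H^s}\to 0$ from a merely bounded $H^s$ norm. The paper closes this by going \emph{above} $s$: since the initial data $u^{ap}_{\omega,n}(0)$ are smooth, the exact solutions belong to $C([0,T];H^{s+1})$ on the same $[0,T]$ (persistence of regularity on the $H^s$ lifespan), with $\|U^{\omega,n}(t)\|_{H^{s+1}}\lesssim \|u^{ap}_{\omega,n}(0)\|_{H^{s+1}}\lesssim n$. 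One then interpolates between $H^\sigma$ and $H^{s+1}$:
\[
\|w\|_{H^s}\le \|w\|_{H^\sigma}^{1/(s+1-\sigma)}\,\|w\|_{H^{s+1}}^{(s-\sigma)/(s+1-\sigma)}\lesssim n^{-\alpha/(s+1-\sigma)}\,n^{(s-\sigma)/(s+1-\sigma)},
\]
and a quantitative decay rate $\alpha>s-\sigma$ in $H^\sigma$ (which the error bound $\|E\|_{H^\sigma}\lesssim\max\{n^{\sigma+3-2s},n^{2\sigma-2s}\}$ supplies) is what makes the exponent negative. Without the $H^{s+1}$ (or some $H^k$, $k>s$) endpoint, the argument stalls at $H^{s-}$.

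A secondary issue: the energy estimate for $w$ cannot be closed for arbitrary $7/2<\sigma<s$. The commutator step (Lemma~\ref{ccm} with coefficient $u^{ap}+U\in H^s$) requires $\sigma+1\le s$, so one must take $5/2<\sigma<s-1$, as in the paper; for $7/2<s\le 9/2$ your stated range $(7/2,s)$ is incompatible with this constraint. Finally, on $\rr$ the paper does not merely localize the constant; it takes the low-frequency piece $u_\ell$ to be the \emph{exact} RZQ solution with data $\omega n^{-1}\varphi(n^{-\delta}x)$, which eliminates several cross terms in the error and is what makes the twelve-term bookkeeping manageable.
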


The multi-pseudo peakon solutions derived in \cite{rzq, qr} combined with the argument found in \cite{byers, hhg} is sufficient to prove that the ivp for the RZQ equation is ill-posed in $H^s$ for $s<7/2$. Combined with our well-posedness result in $H^s$ for $s>7/2$, this establishes $7/2$ as the critical index of well-posedness in Sobolev spaces for the non-periodic case.
\begin{figure}[h]
\includegraphics[width=6cm]{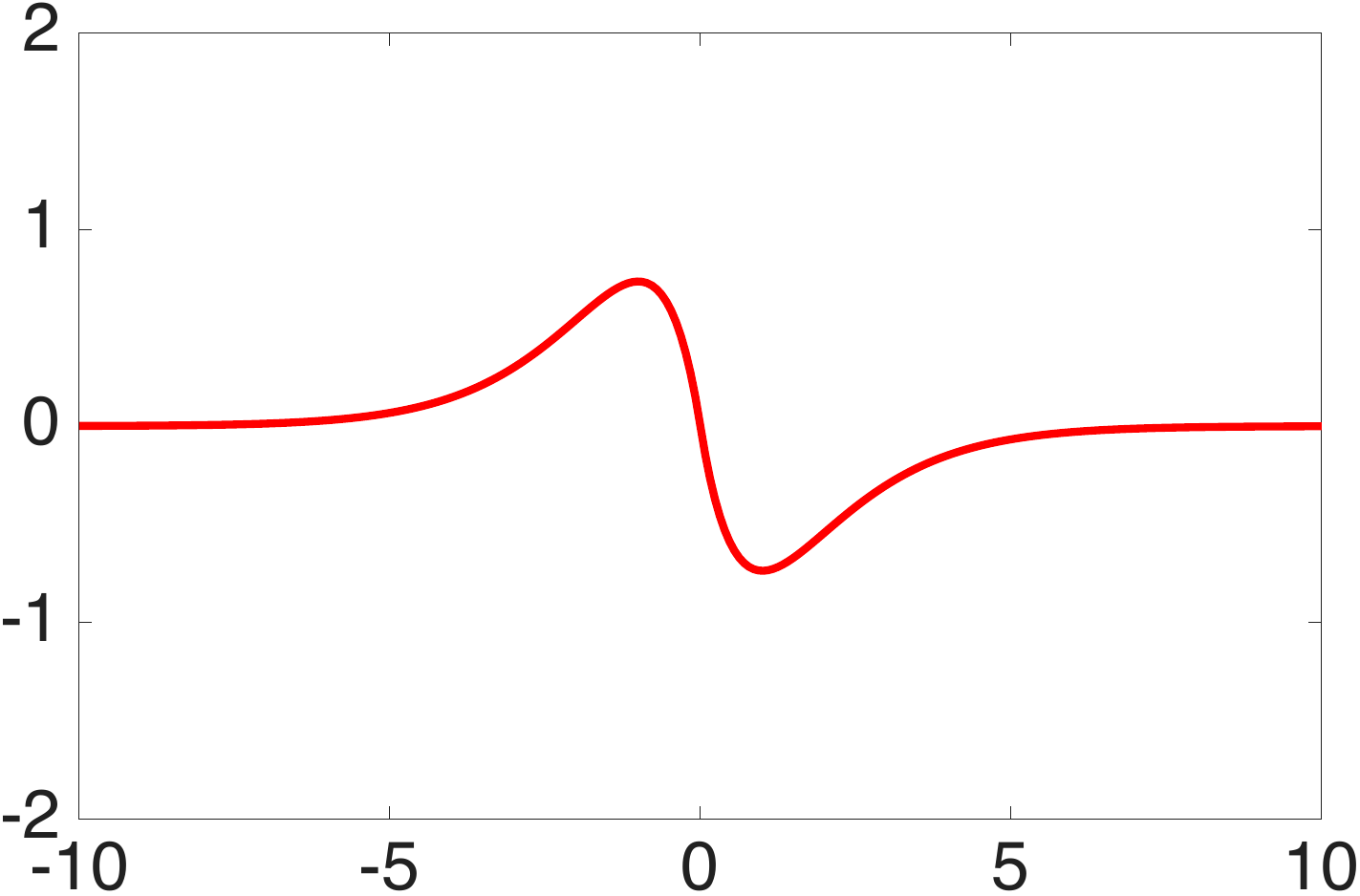}\hspace{ 0.5in}
\includegraphics[width=6cm]{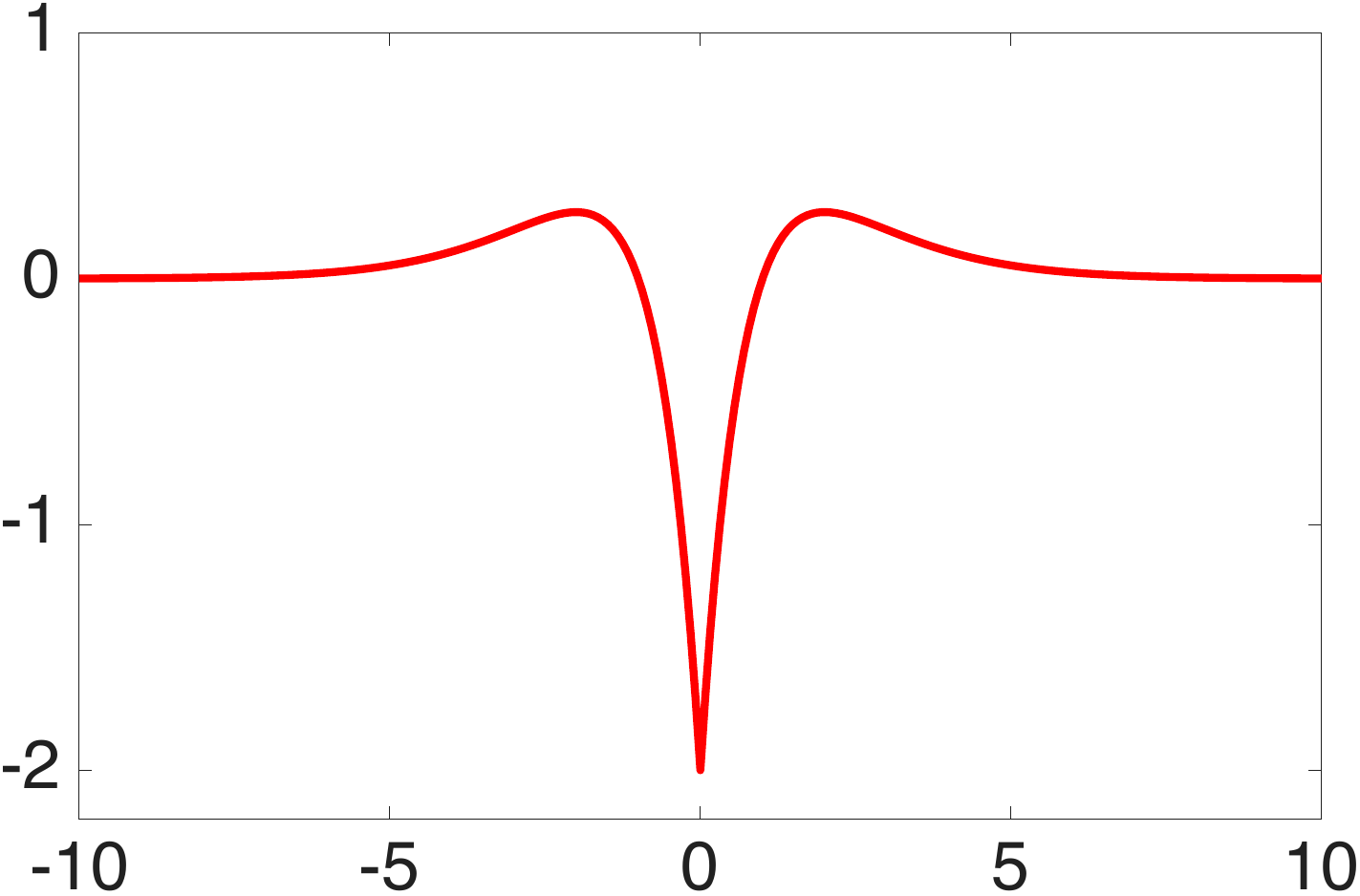}
\centering
\caption{The first (left) and second (right) derivatives of the RZQ pseudo-peakon \eqref{qr-peakon}.}
\end{figure}

\begin{theorem}\label{illp}
The RZQ equation is ill-posed in Sobolev spaces $H^s$ for $s<7/2$ on the real line.
\end{theorem}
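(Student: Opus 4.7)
\textbf{Proof plan for Theorem \ref{illp}.} The plan is to combine the multi-pseudo-peakon solutions of the RZQ equation constructed in \cite{rzq, qr} with the peakon-based ill-posedness mechanism of \cite{byers, hhg}, originally developed for CH-type equations.

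First, I would pin down the precise Sobolev regularity of the pseudo-peakon profile $\phi(x)=e^{-|x|}(1+|x|)$. A direct convolution computation shows that, up to a multiplicative constant, $\phi$ is the Green's function of the operator $\lambda^4=(1-\p_x^2)^2$; equivalently, $\lambda^4\phi = C\delta_0$ in the sense of distributions. Consequently $\wh\phi(\xi) = 4(1+\xi^2)^{-2}$, and
$$
\|\phi\|_{H^s(\rr)}^2 = \int_{\rr} (1+\xi^2)^s\,|\wh\phi(\xi)|^2\,d\xi \sim \int_{\rr} (1+\xi^2)^{s-4}\,d\xi
$$
is finite if and only if $s<7/2$. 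Thus a single travelling pseudo-peakon $u(x,t)=c\phi(x-ct)$ already furnishes a one-parameter family of weak solutions that live exactly at the regularity threshold.

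Next, I would invoke the $N$-pseudo-peakon solutions $u(x,t) = \sum_{i=1}^N p_i(t)\phi(x-q_i(t))$ derived in \cite{rzq, qr}, where the strengths $p_i(t)$ and positions $q_i(t)$ satisfy an explicit Hamiltonian ODE system. Each such $u(\cdot,t)$ is a weak solution of the RZQ equation lying in $H^s(\rr)$ for every $s<7/2$. Following the template of \cite{byers, hhg}, I would then construct a sequence of two-pseudo-peakon initial data whose $H^s$ limit admits two distinct $H^s$-valued weak solutions, or equivalently whose associated solution curves in $C([0,T];H^s)$ fail to converge. The underlying mechanism is the same as in the CH case: by tuning the amplitudes and initial positions one engineers configurations in which arbitrarily small $H^s$ perturbations of the initial data produce order-one changes in the solution at positive times, contradicting Hadamard well-posedness whenever $s<7/2$.

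The main obstacle is verifying that the RZQ $N$-pseudo-peakon ODE system enjoys the qualitative features (near-collision or amplitude sensitivity analogous to the CH setting) required to drive the Byers/HHG construction, together with carefully justifying the weak-solution manipulations in the fifth-order nonlocal formulation \eqref{QR-nl}. Once these dynamical inputs are in place, the remaining step is a routine $H^s$ estimate on translated and rescaled copies of $\phi$, which is immediate from the explicit Fourier representation $\wh\phi(\xi)=4(1+\xi^2)^{-2}$ established above.
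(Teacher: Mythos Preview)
Your plan matches the paper's proof essentially step for step: compute $\wh\phi(\xi)\sim(1+\xi^2)^{-2}$ to see that the pseudo-peakon lies in $H^s(\rr)$ precisely for $s<7/2$, invoke the $N$-pseudo-peakon solutions from \cite{rzq,qr}, and then run the Byers/HHG mechanism.

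The one point worth flagging is what you label as the ``main obstacle.'' The paper dispatches it in a single line: the Hamiltonian system governing the RZQ pseudo-peakon amplitudes and positions is
\[
\dot p_j=-\frac{\p H}{\p q_j},\qquad \dot q_j=\frac{\p H}{\p p_j},\qquad H=\frac12\sum_{i,j}p_ip_je^{-|q_i-q_j|},
\]
which is \emph{identical} to the finite-dimensional CH peakon dynamical system. Hence the near-collision and amplitude-sensitivity analysis from \cite{hhg} applies verbatim with no new dynamical work required; only the Sobolev exponent shifts from $3/2$ to $7/2$ because $\wh\phi$ has two extra powers of $(1+\xi^2)^{-1}$ compared to the CH peakon. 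So your obstacle is not an obstacle at all once you recognize this coincidence of the underlying ODE systems.
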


Finally, we show the following result concerning continuity of the data-to-solution map.

\begin{theorem}
    If $s > \frac72$, $u_0 \in H^s$ and $1 \le r < s$, then the data-to-solution map corresponding to the RZQ equation  is H\"older continuous in the $H^r$ topology.
That is, for $u_0, w_0$ in the ball centered at $0$ with radius $\rho,$ the corresponding solutions satisfy 
\begin{equation}\label{holder}
    \|u(t) - w(t)\|_{H^r} \leq c\|u_0-w_0\|_{H^r}^{\alpha}, 
\end{equation}
where
\begin{equation}
    \alpha = 
\begin{cases}
    1 & \text{ if $(s,r) \in A_1$} \\
    \frac{2(s-3)}{s-r} & \text{ if $(s,r) \in A_2$} \\
    s-r & \text{ if $(s,r) \in A_3$}. 
\end{cases}
\end{equation}
Regions, $A_1, A_2,$ and $A_3$ are depicted in Figure 3.
\end{theorem}
 \begin{figure} 
 \label{figure3}
\begin{tikzpicture}[scale=0.97]
  \begin{axis}[
    xmin=-0.25, xmax=10,
    ymin=-0.25, ymax=10,
    axis lines=center,
    xlabel={$s$}, ylabel={$r$},
    xtick={0,2,4,6,8,10},
    ytick={0,2,4,6,8,10},
    enlargelimits=true,
    ticks=both,
    axis line style={->},
    clip=false
  ]
    \addplot [thick, black, smooth, domain=0:9] {x};
    \node at (axis cs:10,9.2) {\small $r = s$};

    \addplot [thick, black, smooth, domain=0:9] {x - 1};
    \node at (axis cs:10.5,8.2) {\small $r = s - 1$};

    \addplot [thick, black, smooth, domain=0:7] {6 - x}; 
    \node at (axis cs:8.4, -1.2) {\small $r = 6 - s$};

    \addplot [thick, black, dashed, domain=0:9] {1}; 
    \node at (axis cs:9.8, 1) {\small $r = 1$};
    
    \addplot [dashed, thick, black] coordinates {(3.5, -0.25) (3.5, 10)};
    \node[black] at (axis cs:3.5,10.3) {\small $s = \tfrac{7}{2}$};

    \node[black] at (axis cs:7,3) {\footnotesize $A_1$};
    \node[black] at (axis cs:5.5,5) {\footnotesize $A_3$};
    \node at (axis cs:4.1,1.4) {\footnotesize $A_2$};
    
  \end{axis}
\end{tikzpicture}
\caption{Regions $A_1$, $A_2$ and $A_3$.}
\end{figure}

Due to technical limitations, the continuity of the data-to-solution map in the $H^r$ topology  for $  r <1$ is unknown. We contrast this with earlier results by Chen, Liu and Zhang \cite{clz} for the $b$-family of equations (which includes the CH equation) or Himonas and Holmes \cite{himonasholmes} for the Novikov equation, where both the H\"older exponent is different and the region includes $r\ge 0$.  This proof employs the following new estimate which may be useful to other mathematicians.
\begin{lemma} 
If $1 \leq \mu \leq 3, \sigma - 3 > \frac12,$ and $\mu + \sigma \geq 6,$ then 
    \[\|fg\|_{H^{\mu-3}} \leq c_{\mu,\sigma} \|f\|_{H^{\sigma-3}} \|g\|_{H^{\mu-3}}.\]
\end{lemma}

Our paper is organized as follows.  In section two, we establish well-posedness in Sobolev spaces and our solution size estimate as specified in Theorem \ref{WP}.  Then in section three, we prove Theorem \ref{nonuniform} and show that the continuity of the data-to-solution map is sharp.  In section four, we prove Theorem \ref{illp} and establish ill-posedness and finally in section five we provide the proof of H\"older continuity.

\section{Proof of Well-posedness}
In this section, we shall prove Theorem \ref{WP}. As the proofs in the non-periodic case bears great similarity to the periodic case, we provide details for the periodic case and briefly outline the necessary modifications for the non-periodic setting. We refer the reader to \cite{himhol} for a similar proof with more details provided. 

We observe that for $u \in H^s$ the term $\lambda^{-2} [ \lambda^2 u \lambda^2 u_x ]  \in H^{s-1}$, $s>7/2$. Replacing this term with the mollified smooth version, we obtain an ODE on the space $H^s$
\begin{align}\label{mollifiedQR}
 & u_t  +J_\ee \lambda^{-2} [ \lambda^2 J_\ee u \lambda^2\partial_x J_\ee u ] 
  = 
  - \lambda^{-4} [   \lambda^2 u_x \lambda ^2u_{xx}  + 2 
 \lambda ^2  u \lambda^2 u_x   ]  ,
 \\
\label{mollifiedQRdata}
& u_\ee(x,0) =  J_\ee u_0(x).
\end{align}
Here, for each $\ee \in (0, 1]$, the operator $J_\ee u = j_\ee * u$ is the well-known Friedrichs mollifier. The function $j_\ee$ is defined by first fixing a Schwartz function $j(x)\in\mathcal{S}(\mathbb{R})$ and then define the periodic functions $j_\ee$ by
\[
j_\ee(x)\doteq\frac{1}{2\pi}\sum_{n\in\mathbb{Z}}\widehat{j}(\ee n)e^{inx}
\]
satisfying $0\leq\widehat{j}(\xi)\leq1$ for all $\xi\in\mathbb{R}$, and $\widehat{j}(\xi)=1$ for all $\xi \in [-1,1]$.

We also define the function
\[
F_\ee(u) = -J_\ee \lambda^{-2} [ \lambda ^2J_\ee u \lambda ^2\partial_x J_\ee u ] 
  - \lambda^{-4} [   \lambda^2 u_x \lambda^2 u_{xx}  + 2 
 \lambda ^2  u \lambda^2 u_x   ]
\]
and note that
\begin{align*}
\p_uF_\ee(u)h =&   -J_\ee[\lambda^2\p_xJ_\ee u\lambda^2 J_\ee h+\lambda^2 J_\ee u \lambda ^2\p_x J_\ee h] 
\\ &  - \lambda^{-4}[\lambda^2\p_xu\lambda^2 \p_x^2h+\lambda^2\p_x^2u\lambda^2\p_xh+2\lambda^2 u\lambda^2\p_x h+2\lambda^2\p_xu\lambda^2 h].
\end{align*}

Therefore, for each $\ee$, the mollified smooth version, equation \eqref{mollifiedQR} defines an ODE on $H^s$, and thus has a unique solution $u_\ee$ with lifespan $T_\ee$. We shall first prove that there exists a uniform lifespan for the solutions to the ivp \ref{mollifiedQR}-\ref{mollifiedQRdata}. 
\begin{theorem}
For each $\ee \in (0,1]$, there exists a $T $ which depends only on the size of the initial data, $\|u_0\|_{H^s}$, and a constant $c_s$, which depends only on $s>7/2$, such that for all $0<|t|<T$, the solution $u_\ee$ to the ivp \ref{mollifiedQR}-\ref{mollifiedQRdata} satisfies
$$
\|u_\ee (t) \|_{H^s} \le \frac{\|J_\ee u_0\|_{H^s}}{1-c_s\|J_\ee u_0\|_{H^s} t}.
$$
\end{theorem}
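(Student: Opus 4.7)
The plan is to derive a standard $H^s$ energy estimate that leads to a Riccati-type differential inequality. Specifically, I would apply $\Lambda^s = (1-\partial_x^2)^{s/2}$ to equation \eqref{mollifiedQR}, take the $L^2$ inner product with $\Lambda^s u_\ee$, and show that
\[
\frac{d}{dt}\|u_\ee\|_{H^s} \le c_s \|u_\ee\|_{H^s}^2,
\]
for some constant $c_s$ depending only on $s>7/2$. Integrating this differential inequality yields
\[
\|u_\ee(t)\|_{H^s} \le \frac{\|u_\ee(0)\|_{H^s}}{1 - c_s \|u_\ee(0)\|_{H^s} t} = \frac{\|J_\ee u_0\|_{H^s}}{1 - c_s \|J_\ee u_0\|_{H^s} t},
\]
valid on an interval $[0,T]$ with $T$ depending only on $\|u_0\|_{H^s}$ (using that $\|J_\ee u_0\|_{H^s} \le \|u_0\|_{H^s}$). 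The constant $c_s$ must be independent of $\ee$, which is the whole point: a uniform lifespan comes from a uniform estimate.

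The non-local term $\lambda^{-4}[\lambda^2 u_x \lambda^2 u_{xx} + 2\lambda^2 u \lambda^2 u_x]$ is the easier one. Since $\lambda^{-4}$ corresponds to $D^{-4}$ where $D = \Lambda$, applying $\Lambda^s$ reduces it to $\Lambda^{s-4}$ of a quadratic expression. Using the Sobolev algebra property (here is where $s>7/2$ enters: we need $s-3>1/2$ so that $H^{s-3}$ is an algebra), one bounds each summand: $\lambda^2 u_x \cdot \lambda^2 u_{xx}$ lies in $H^{s-4}$ with norm $\lesssim \|u_\ee\|_{H^s}^2$ (e.g.\ by writing it as $\tfrac12 \partial_x (\lambda^2 u_x)^2$), and similarly for $\lambda^2 u \cdot \lambda^2 u_x$. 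Cauchy--Schwarz against $\Lambda^s u_\ee$ then produces a term $\lesssim \|u_\ee\|_{H^s}^3$.

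The main obstacle is the mollified transport-like term $J_\ee \lambda^{-2}[\lambda^2 J_\ee u_\ee \lambda^2 \partial_x J_\ee u_\ee]$, which carries the top-order derivative. Here I would exploit the self-adjointness of $J_\ee$ to move the outer mollifier onto $\Lambda^s u_\ee$, producing an inner product involving only $v \doteq J_\ee u_\ee$. Writing $\lambda^2 = \Lambda^2$, this contribution becomes $\langle \Lambda^{s-2}[\Lambda^2 v \cdot \partial_x \Lambda^2 v], \Lambda^s v \rangle$. Setting $w=\Lambda^2 v$ and splitting via Kato--Ponce
\[
\Lambda^{s-2}[w w_x] = w\, \Lambda^{s-2} w_x + [\Lambda^{s-2}, w]\, w_x,
\]
the first piece gives $\langle w\, \partial_x \Lambda^s v, \Lambda^s v\rangle$, which integration by parts converts into $-\tfrac12 \int (\Lambda^2 v)_x (\Lambda^s v)^2\, dx$, bounded by $\|v\|_{H^{7/2+}} \|v\|_{H^s}^2 \lesssim \|u_\ee\|_{H^s}^3$. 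The commutator piece is estimated by the Kato--Ponce inequality as $\|[\Lambda^{s-2}, w] w_x\|_{L^2} \lesssim \|v\|_{H^s} \|v\|_{H^{7/2+}}$, which again yields $\|u_\ee\|_{H^s}^3$ after pairing with $\Lambda^s v$. This derivative-balancing trick (mollifier self-adjointness plus a single commutator) is where the threshold $s>7/2$ is genuinely required, since the embedding $H^{s-3}\hookrightarrow L^\infty$ is applied to $\partial_x \Lambda^2 v$, whose regularity is exactly $s-3$. All constants produced this way depend only on $s$, not on $\ee$, and the Riccati argument completes the proof.
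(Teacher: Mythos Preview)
Your proposal is correct and follows essentially the same approach as the paper: apply $\lambda^s$, pair with $\lambda^s u_\ee$, handle the $\lambda^{-4}$ nonlocal term by Cauchy--Schwarz and the algebra property, and treat the mollified transport term via self-adjointness of $J_\ee$, the Kato--Ponce commutator estimate, and integration by parts to arrive at $\frac{d}{dt}\|u_\ee\|_{H^s}^2 \le C\|u_\ee\|_{H^s}^3$. The paper's write-up is terser (and suppresses some $J_\ee$'s in the intermediate displays), but the underlying argument is precisely the Riccati energy estimate you describe.
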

\begin{proof}
    Apply $ \lambda^{s}$ on the left side, multiply by $\lambda^{s} u_\ee$ on the right hand side, and integrate to find 
\begin{align}
   \frac{d}{dt}  \|u_\ee\|^2_{H^s}   
   =  &
  - \int \lambda^{s-2} J_\ee \left [ \lambda^2 J_\ee u_\ee \lambda^2 \partial_x J_\ee u _\ee \right]  \lambda^s u_\ee dx
\notag
\\
  &- \int \lambda^{s-4} \partial_x \left[    \frac12( \lambda^2 \partial_x u_\ee  )^2 +  
(  \lambda^2   u _\ee )^2  \right]  \lambda^s u_\ee dx .
\notag
\end{align}
  We apply the Cauchy-Schwarz inequality and the algebra property for Sobolev spaces to the second term  to obtain 
  \begin{align}
    \frac{d}{dt}  \|u_\ee\|^2_{H^s}   
  & \le
  - \int \lambda^{s-2} J_\ee \left [ \lambda^2 J_\ee u_\ee \lambda^2 \partial_x J_\ee u _\ee \right]  \lambda^s u_\ee dx
  +     C \|u_\ee\|_{H^s}^3. 
\end{align}
For the first term, we will use the following commutator estimate found in \cite{kp}.
\begin{lemma}\label{KP} [Kato-Ponce] If $s>0$ then there is a $c_s>0$ such that 
\begin{align}
\| \lambda^s [fg]- f\lambda^s[g]\|_{L^2} \le c_s \left( \|\lambda^s f\|_{L^2} \|g\|_{L^\infty}+ \|\partial_x f\|_{L^\infty} \|\lambda^{s-1}g\|_{L^2} 
\right). 
\end{align}
\end{lemma}

We commute the operator $\lambda^s$ with $J_\ee u_\ee$ and apply Lemma \ref{KP}, use the Sobolev embedding theorem and the algebra property. The result is  
  \begin{align}
   \frac{d}{dt}  \|u_\ee\|^2_{H^s}   
  & \le
    -  \int  \lambda^2 u_\ee \lambda^s \partial_x  u_\ee    \lambda^s u_\ee dx
  +     C \|u\|_{H^s}^3. 
\end{align}
 For the remaining integral, we commute $\partial_x$ with $\lambda^s$, and then apply integration by parts followed by H\"older's inequality. The result is that for a constant depending only on $s>7/2$ we have 
  \begin{align} 
   \frac{d}{dt}  \|u_\ee\|^2_{H^s}  
  & \le C \|u_\ee\|_{H^s}^3 . 
\end{align}
Solving this differential inequality gives the desired result. 
\end{proof}

By taking $t \leq T = 1/(2c_s\|u_0\|_{H^s})$, we find that $u_\ee$ satisfies the solution size bound
\begin{equation} \label{soln-bd}
    \|u_\ee\|_{H^s} \leq 2\|J_\ee u_0\|_{H^s} \leq 2\|u_0\|_{H^s}.
\end{equation}
Furthermore, we have that
\begin{equation}
    \|\p_tu_\ee\|_{H^{s-1}} \lesssim \|u_\ee\|_{H^s}^2 \lesssim \|u_0\|_{H^s}^2.
\end{equation}
In order to prove that the ivp for the RZQ equation is well-posed, we shall show that there exists a subsequence of $u_\ee$, $\ee \rightarrow 0$, that converges to a solution $u$ of the ivp for the RZQ equation. We will then show that solutions are unique and depend continuously upon the initial data, $u_0$.

\subsection{Existence of Solutions} Our first step is showing that there exists a function, $u$, constructed as the limit of a subsequence of the family of solutions $u_\ee$, as $\ee \rightarrow 0$. In order to find this subsequence, we shall refine the sequence $\{u_\ee\}$ several times labeling the subsequence $\{u_{\ee_\nu}\}$, and after each refinement, we shall for convenience, relabel the subsequence $\{u_\ee\}$. 

\subsubsection{Weak$^*$ convergence in $L^\infty([0,T]; H^s)$}Observing that the family, $\{u_\ee\}$ is bounded in\\ $C([0, T]; H^s) \subset L^\infty([0, T]; H^s) $, the Banach-Alaoglu theorem tells us that $\{u_\ee\}$  is precompact with respect to the weak$^*$ topology in $\bar{B}(0,2\|u_0\|_{H^s}) \subset L^\infty([0, T]; H^s)$. Therefore there is an element $u \in  \bar{B}(0,2\|u_0\|_{H^s})$ and a subsequence $\{u_{\ee_\nu}\}$ which converges to $u$ weakly$^*$. 

\subsubsection{Strong convergence in $C([0,T]; H^{s-1})$}We now relabel the sequence $\{u_{\ee_\nu}\}$ as $\{u_\ee\}$ and show that this sequence converges strongly to $u$ in the $C([0,T];H^{s-1})$ topology.  Indeed, we note that for $t_1,t_2 \in [0,T]$ and the Mean Value Theorem, we have that
\[
\|u_\ee(t_1)-u_\ee(t_2)\|_{H^{s-1}} \leq \sup_{t\in[0,T]}\|\p_tu_\ee\|_{H^{s-1}}|t_1-t_2| \lesssim |t_1-t_2|.
\]
This establishes the fact that the family $\{u_\ee\}$ is equicontinuous.  Furthermore, since the torus is a compact manifold, the inclusion mapping $i:H^s \hookrightarrow H^{s-1}$ is a compact operator, and therefore we may deduce that $U(t) = \{u_\ee(t)\}$ is a precompact set in $H^{s-1}$.  Thus, by Ascoli's theorem, there is a subsequence $\{u_{\ee_\nu}\}$ converging to an element in $H^{s-1}$.  By uniqueness of the limit, this must be $u$.

\subsubsection{Strong convergence in $C([0,T]; H^{s-\s})$}We now wish to demonstrate that this sequence $\{u_{\ee_\nu}\}$ (relabeled as $\{u_\ee\}$) converges strongly to $u$ in the space $C([0,T];H^{s-\s})$ for $\s \in (0,1)$.  To accomplish this task, we will need the following results.

\begin{lemma}
For $\ee \in [0,1)$ and $\s \in (0,1)$ we have $u_\ee \in C^\s([0,T];H^{s-\s})$.  Furthermore, the $C^\s([0,T];H^{s-\s})$ norm of $u_\ee$ is bounded by
\[
\|u_\ee\|_{C^\s([0,T];H^{s-\s})} \lesssim \|u_0\|_{H^s}+\|u_0\|_{H^s}^2.
\]
\end{lemma}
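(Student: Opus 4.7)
The plan is to interpolate between two endpoint estimates that are already in hand: the uniform $H^s$-bound on $u_\ee$ (corresponding to the case $\s = 0$) and a Lipschitz-in-time estimate in $H^{s-1}$ that follows from the bound on $\p_t u_\ee$ (corresponding to the case $\s = 1$). For the first, the size estimate \eqref{soln-bd} gives $\|u_\ee(t_1) - u_\ee(t_2)\|_{H^s} \le 4\|u_0\|_{H^s}$ for any $t_1, t_2 \in [0,T]$. For the second, the bound $\|\p_t u_\ee\|_{H^{s-1}} \lesssim \|u_0\|_{H^s}^2$ recorded just above the lemma, combined with the fundamental theorem of calculus applied Fourier-mode by Fourier-mode, yields
\[
\|u_\ee(t_1)-u_\ee(t_2)\|_{H^{s-1}} \le \sup_{t\in[0,T]}\|\p_t u_\ee(t)\|_{H^{s-1}}\, |t_1-t_2| \lesssim \|u_0\|_{H^s}^2\, |t_1-t_2|.
\]

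Next I would invoke the standard Sobolev interpolation inequality
\[
\|f\|_{H^{s-\s}} \le \|f\|_{H^{s-1}}^{\s}\|f\|_{H^s}^{1-\s}, \qquad \s\in(0,1),
\]
applied to $f = u_\ee(t_1)-u_\ee(t_2)$. Plugging in the two endpoint bounds gives
\[
\|u_\ee(t_1)-u_\ee(t_2)\|_{H^{s-\s}} \lesssim \bigl(\|u_0\|_{H^s}^2 |t_1-t_2|\bigr)^{\s}\bigl(\|u_0\|_{H^s}\bigr)^{1-\s} = \|u_0\|_{H^s}^{1+\s}|t_1-t_2|^{\s}.
\]
Dividing by $|t_1-t_2|^{\s}$ and taking the supremum over $t_1 \ne t_2$ controls the Hölder seminorm, while the trivial embedding $\sup_t \|u_\ee(t)\|_{H^{s-\s}} \le \sup_t \|u_\ee(t)\|_{H^s} \le 2\|u_0\|_{H^s}$ controls the sup-norm part of the $C^{\s}$-norm. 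Using the elementary inequality $\|u_0\|_{H^s}^{1+\s} \le \|u_0\|_{H^s} + \|u_0\|_{H^s}^2$, valid for every $\s\in[0,1]$ (splitting into the cases $\|u_0\|_{H^s} \le 1$ and $\|u_0\|_{H^s} > 1$), one concludes
\[
\|u_\ee\|_{C^{\s}([0,T]; H^{s-\s})} \lesssim \|u_0\|_{H^s}+\|u_0\|_{H^s}^2.
\]

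There is no substantive obstacle here; the argument is a routine interpolation between endpoint estimates. The main point to emphasize is that both endpoint bounds are \emph{uniform} in $\ee$, so the resulting $C^{\s}$-bound is as well. This $\ee$-uniformity is precisely what the next subsection will exploit, via Arzelà–Ascoli together with the compact embedding $H^s \hookrightarrow H^{s-\s}$, to promote the $C([0,T]; H^{s-1})$ convergence established above to strong convergence in $C([0,T]; H^{s-\s})$.
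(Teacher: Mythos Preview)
Your proof is correct and follows essentially the same approach as the paper: both arguments combine the uniform $H^s$ bound with the Lipschitz-in-$H^{s-1}$ bound coming from $\|\p_t u_\ee\|_{H^{s-1}} \lesssim \|u_0\|_{H^s}^2$, and then interpolate. The only cosmetic difference is that the paper performs the interpolation additively at the Fourier level via the inequality $x^{\s}\le 1+x$ (yielding $\|u_0\|_{H^s}^2 + \|u_0\|_{H^s}^4$ under the square root), whereas you use the standard multiplicative interpolation inequality $\|f\|_{H^{s-\s}}\le \|f\|_{H^{s-1}}^{\s}\|f\|_{H^s}^{1-\s}$ and then the elementary bound $\|u_0\|_{H^s}^{1+\s}\le \|u_0\|_{H^s}+\|u_0\|_{H^s}^2$ to reach the stated form.
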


\begin{proof}
    By definition, we have
    \[
    \|u_\ee\|_{C^\s([0,T];H^{s-\s})} \doteq \sup_{t \in [0,T]}\|u_\ee\|_{H^{s-\s}}+\sup_{t \neq t'}\frac{\|u_\ee(t)-u_\ee(t')\|_{H^{s-\s}}}{|t-t'|^{\s}}.
    \]
  Not that the first term on the right hand side may be bounded by using the solution size estimate.  The second term may be handled by noting the inequality $x^\s \leq 1+x$ for $x>0$ where $x = 1/((1+k^2)|t-t'|^2$ and we see that
  \begin{align*}
      \sup_{t \neq t'}\frac{\|u_\ee(t)-u_\ee(t')\|_{H^{s-\s}}^2}{|t-t'|^{2\s}} & = \sup_{t \neq t'}\sum_{k \in \zz}(1+k^2)^{s-\s}\frac{|\wh{u_\ee}(t)-\wh{u_\ee}(t')|^2}{|t-t'|^{2\s}} \\ 
& \leq \sup_{t \neq t'}\sum_{k \in \zz}(1+k^2)^s|\wh{u_\ee}(t)-\wh{u_\ee}(t')|^2+\sup_{t \neq t'}\sum_{k \in \zz}(1+k^2)^{s-1}\frac{|\wh{u_\ee}(t)-\wh{u_\ee}(t')|^2}{|t-t'|^{2}} \\
 &\leq \|u_\ee\|_{C([0,T];H^s)}^2+\|\p_tu_\ee\|_{C([0,T];H^{s-1})}^2 \\ 
 & \lesssim \|u_0\|_{H^s}^2+\|u_0\|_{H^s}^4
  \end{align*}
  By taking square roots of both sides we obtain the desired result.
\end{proof}

We again find that the sequence is equicontinuous in $H^{s-\s}$ since
\[
\|u_\ee(t)-u_\ee(t')\|_{H^{s-\s}} \leq \|u_\ee\|_{C^\s([0,T];H^{s-\s})}|t-t'|^\s.
\]
Using the fact that the inclusion mapping $i:H^s \hookrightarrow H^{s-\s}$ is a compact operator, the sequence $\{u_\ee\}$ is precompact in $H^{s-\s}$ and hence, by Ascoli's theorem, there exists a subsequence $\{u_{\ee_\nu}\}$ that converges to an element in that space.  By uniqueness of the limit, that element must be $u$.

\subsubsection{Convergence in $C^3$}We now choose $\s>0$ so small that $s-\s>7/2$ and apply the Sobolev embedding theorem to obtain
\[
\|u_\ee(t)-u_\ee(t')\|_{C^3} \lesssim \|u_\ee(t)-u_\ee(t')\|_{H^{s-\s}}.
\]
Therefore, we may also find that the sequence $\{u_\ee\}$ is also equicontinuous in $C^3$ since
\[
\|u_\ee(t)-u_\ee(t')\|_{C^3} \lesssim \|u_\ee(t)-u_\ee(t')\|_{H^{s-\s}} \lesssim |t-t'|^\s.
\]
Furthermore, we have that
\[
\|u_\ee\|_{H^{s-\s}} \lesssim \|u_\ee\|_{H^s} \lesssim \|u_0\|_{H^s}.
\]
Therefore, by Rellich's lemma, for each $t \in [0,T]$, the set $\{u_\ee\}$ is precompact in $H^{s-\s}$.  That is, any subset of $\{u_\ee\}$ contains a sequence $\{u_{\ee_n}\}$ that converges to an element $u \in H^{s-\s}$.  By the above estimates, we may conclude that $\{u_\ee\}$ is precompact in $C^3$.  Therefore, we find that $\{u_{\ee_\nu}\} \to u$ in $C([0,T];C^3)$.

\subsubsection{Verifying that the limit $u$ solves the RZQ equation}  Here, we will need the following lemma.
\begin{lemma}
\label{linop}
For $0<r\leq s$, the map $I-J_\ee:H^s \rightarrow H^r$ satisfies the norm operator estimate 
\begin{equation*}
\|I-J_\ee\|_{\mathcal{L}(H^s,H^r)}=o(\ee^{s-r}).
\end{equation*}
\end{lemma}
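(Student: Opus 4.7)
The strategy is to diagonalize $I - J_\ee$ on the Fourier side and exploit the frequency localization of its symbol. Since $J_\ee$ is convolution with $j_\ee$, it acts on Fourier coefficients as multiplication by $\wh{j}(\ee n)$, so $I - J_\ee$ is the Fourier multiplier with symbol $1 - \wh{j}(\ee n)$. The hypotheses $\wh{j}\equiv 1$ on $[-1,1]$ and $0\le\wh{j}\le1$ immediately imply that this symbol vanishes for $|n|\le 1/\ee$ and is bounded by $1$ elsewhere.

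The first step is to apply Plancherel's theorem to obtain
\[
\|(I - J_\ee) u\|_{H^r}^2 \;=\; \sum_{|n| > 1/\ee} (1 + n^2)^r \,(1 - \wh{j}(\ee n))^2 \,|\wh{u}(n)|^2.
\]
The next step is the standard trade between regularity and the $\ee$ power: for $|n| > 1/\ee$ one has $1 + n^2 > 1/\ee^2$, and since $r \le s$ this gives $(1+n^2)^{r-s} \le \ee^{2(s-r)}$. Pulling this factor out and dropping the bounded symbol $(1-\wh{j}(\ee n))^2 \le 1$ yields
\[
\|(I - J_\ee) u\|_{H^r}^2 \;\le\; \ee^{2(s-r)} \sum_{|n| > 1/\ee} (1 + n^2)^s |\wh{u}(n)|^2 \;\le\; \ee^{2(s-r)} \|u\|_{H^s}^2,
\]
which already gives the bound of order $\ee^{s-r}$.

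To upgrade from $O$ to $o(\ee^{s-r})$, the observation is that the middle sum above is the high-frequency tail of $\|u\|_{H^s}^2$, which tends to zero as $\ee\to 0$ because the full series converges. This is the standard Bona--Smith tail refinement for Friedrichs mollifiers. The non-periodic case is identical after replacing Fourier series by the Fourier transform and the sum over $|n| > 1/\ee$ by the integral over $|\xi| > 1/\ee$. The whole argument is Plancherel plus frequency localization, and I do not anticipate any substantive obstacle; the one nontrivial ingredient is the hypothesis $\wh{j}\equiv 1$ on $[-1,1]$, which is precisely what localizes the symbol to $|n| > 1/\ee$ and unlocks the $\ee^{s-r}$ trade.
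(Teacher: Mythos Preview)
The paper states this lemma without proof, so there is no argument of theirs to compare against. Your derivation of the bound $\|I-J_\ee\|_{\mathcal{L}(H^s,H^r)}\le \ee^{s-r}$ via the frequency cutoff at $|n|>1/\ee$ is the standard one and is correct.

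The upgrade from $O$ to $o(\ee^{s-r})$, however, does not go through. Your tail observation shows that for each \emph{fixed} $u\in H^s$ one has $\ee^{-(s-r)}\|(I-J_\ee)u\|_{H^r}\to 0$; that is the pointwise Bona--Smith refinement. But the lemma is stated for the \emph{operator norm}, and the tail decay is not uniform over the unit ball of $H^s$. Concretely: pick $K>1$ with $\wh{j}(K)<1$ (possible since $\wh{j}$ is Schwartz and hence vanishes at infinity), and for each small $\ee$ let $u_\ee$ be the single Fourier mode at $n_\ee=\lfloor K/\ee\rfloor$, normalized so that $\|u_\ee\|_{H^s}=1$. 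Then
\[
\|(I-J_\ee)u_\ee\|_{H^r}=(1+n_\ee^2)^{(r-s)/2}\,\bigl|1-\wh{j}(\ee n_\ee)\bigr|\ \sim\ K^{r-s}\,\bigl|1-\wh{j}(K)\bigr|\,\ee^{s-r},
\]
which gives the lower bound $\|I-J_\ee\|_{\mathcal{L}(H^s,H^r)}\gtrsim \ee^{s-r}$. (For $r=s$ the operator norm equals $\sup_n|1-\wh{j}(\ee n)|=1$ for every $\ee>0$.) So the $o$ in the statement is not attainable as an operator-norm assertion; it should read $O(\ee^{s-r})$. The paper's applications of the lemma take $r<s$ strictly and only use it to conclude that certain quantities tend to zero, for which your $O(\ee^{s-r})$ bound already suffices.
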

We will now show that the sequence $\{\p_tu_{\ee_\nu}\}$ (relabeled as $\{\p_tu_\ee\}$) converges to $\p_tu$ in the space $C([0,T];C^2)$.  Starting from our mollified RZQ \eqref{mollifiedQR}, we have
\begin{equation}\label{molli-de}
    \p_tu_\ee = -J_\ee \lambda^{-2} [ \lambda^2 J_\ee u_\ee \lambda^2 \partial_x J_\ee u_\ee ] 
  - \lambda^{-4} \p_x \left[ \frac12(\lambda^2 \p_xu_\ee)^2  +  
 (\lambda ^2  u_\ee)^2   \right].
\end{equation}
By continuity of the operator $\lambda^{-2}\p_x$, we may immediately deduce the convergence of the nonlocal terms
\begin{align*}
    &\frac12\lambda^{-4}\p_x(\lambda ^2\p_xu_\ee)^2 \to \frac12\lambda^{-4}\p_x(\lambda^2 \p_xu)^2, \\
    &\lambda^{-4}\p_x(\lambda^2 u_\ee)^2 \to \lambda^{-4}\p_x(\lambda^2 u)^2.
\end{align*}
To handle the mollified twisted-Burgers term $J_\ee \lambda^{-2} [ \lambda^2 J_\ee u_\ee \lambda^2 \partial_x J_\ee u_\ee ]$ in \eqref{molli-de} we first rewrite it as $(1/2)J_\ee\lambda^{-2}\p_x(\lambda^2 J_\ee u_\ee)^2$ and then apply triangle inequality to obtain
\begin{align}\label{ineq1}
\|J_\ee\lambda^{-2}\p_x(\lambda^2 J_\ee u_\ee)^2-\lambda^{-2}\p_x(\lambda^2 u)^2 & \|_{C([0,T];C^2)}  \leq \|J_\ee\lambda^{-2}\p_x(\lambda ^2J_\ee u_\ee)^2-\lambda^{-2}\p_x(\lambda^2  J_\ee u_\ee)^2\|_{C([0,T];C^2)} \nonumber \\
&+\|\lambda^{-2}\p_x(\lambda^2 J_\ee u_\ee)^2-\lambda^{-2}\p_x(\lambda ^2u)^2\|_{C([0,T];C^2)}
\end{align}
For the first term, choose $5/2<r<s-\s-1<s-1$.  Then we have by Lemma \ref{linop}
\begin{align}\label{ineq2}
\|J_\ee\lambda^{-2}\p_x(\lambda^2J_\ee u_\ee)^2-\lambda^{-2}\p_x(\lambda^2  J_\ee u_\ee)^2\|_{C([0,T];C^2)} &\lesssim \|J_\ee\lambda^{-2}\p_x(\lambda ^2J_\ee u_\ee)^2-\lambda^{-2}\p_x(\lambda^2  J_\ee u_\ee)^2\|_{H^r} \nonumber \\ 
& \lesssim \|I-J_\ee\|_{\mathcal{L}(H^{s-1},H^r)}\|\lambda^{-2}\p_x(\lambda ^2J_\ee u_\ee)^2\|_{H^{s-1}} \nonumber \\ 
& = o\left(\ee^{s-r-1}\right).
\end{align}
For the second term, we have that
\begin{align}\label{ineq3}
    \|\lambda^{-2}\p_x(\lambda^2 J_\ee u_\ee)^2-\lambda^{-2}\p_x(\lambda ^2u)^2\|_{C([0,T];C^2)} &\lesssim \|\lambda^{-2}\p_x(\lambda ^2J_\ee u_\ee)^2-\lambda^{-2}\p_x(\lambda^2 u)^2\|_{H^r} \nonumber \\
    & \lesssim \|J_\ee u_\ee + u\|_{H^{r+1}}\|J_\ee u_\ee - u\|_{H^{r+1}} \nonumber \\ 
    &\lesssim \|J_\ee u_\ee - u\|_{H^{r+1}} \nonumber \\ 
    &\leq \|J_\ee u_\ee-u_\ee\|_{H^{r+1}}+\|u_\ee-u\|_{H^{r+1}}.
\end{align}
For the first term in \eqref{ineq3}, we use Lemma \ref{linop} to obtain
\begin{equation}\label{ineq4}
    \|J_\ee u_\ee-u_\ee\|_{H^{r+1}} \lesssim \|I-J_\ee\|_{\mathcal{L}(H^s,H^{r+1})}\|u_\ee\|_{H^s} = o\left(\ee^{s-r-1}\right).
\end{equation}
For the second term, we note that
$
\|u_\ee-u\|_{H^{r+1}} \lesssim \|u_\ee-u\|_{H^{s-\s}} \to 0
$
as $\ee \to 0$.  Thus, we conclude that $\{\p_tu_\ee\} \to \p_t u$ in $C([0,T];C^2)$.

\subsubsection{Improving the regularity to $C([0,T];H^s)$}  Now that we know there is a solution \\ $u \in L^\infty([0,T]; H^s)$, we wish to improve it's regularity and show that $u \in C([0,T];H^s)$.  Therefore, we will fix $t \in [0,T]$ and consider a sequence $\{t_n\} \to t$.  We will show that $\|u(t_n) - u(t)\|_{H^s} \to 0$ as $n \to \infty$.  To do so, we establish the following lemmas.
\begin{lemma}\label{inprod}
    The solution $u \in L^\infty([0,T]; H^s)\cap Lip([0,T];H^{s-1})$ is continuous in $t$ with respect to the weak topology on $H^s$.  That is, for any $\varphi \in H^s$, and a sequence $\{t_n\} \to t$, we have that $$\langle u(t_n)-u(t),\varphi\rangle_{H^s} \to 0 \ \ \ \text{as} \ \ \{t_n\} \to t.$$
\end{lemma}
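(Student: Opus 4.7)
The plan is to exploit the two types of control on $u$ already in hand: the uniform bound $\|u(t)\|_{H^s}\le 2\|u_0\|_{H^s}$ from \eqref{soln-bd}, and the Lipschitz continuity in $H^{s-1}$ established earlier via the Mean Value Theorem. The key observation is the regularity-trade identity
\[
\langle f,g\rangle_{H^s}=\langle f,\lambda^2 g\rangle_{H^{s-1}},
\]
valid whenever both sides make sense, which is immediate from the Fourier characterization of the $H^s$ inner product since $\lambda^2$ acts as multiplication by $1+\xi^2$ (respectively $1+k^2$ in the periodic case). This identity lets me move two derivatives off of the difference $u(t_n)-u(t)$ and onto the test function, trading the strong topology on the $u$-side for the weaker topology where differences are small.

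Given $\varphi\in H^s$ and $\eta>0$, I would first approximate $\varphi$ by a smoother test function $\varphi_\delta\in H^{s+1}$ via the mollifier $\varphi_\delta\doteq J_\delta\varphi$, choosing $\delta$ so small that $4\|u_0\|_{H^s}\|\varphi-\varphi_\delta\|_{H^s}<\eta/2$. Splitting
\[
\langle u(t_n)-u(t),\varphi\rangle_{H^s}=\langle u(t_n)-u(t),\varphi-\varphi_\delta\rangle_{H^s}+\langle u(t_n)-u(t),\varphi_\delta\rangle_{H^s},
\]
Cauchy--Schwarz in $H^s$ combined with \eqref{soln-bd} controls the first summand by $\eta/2$, uniformly in $n$.

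For the second summand, applying the identity yields
\[
|\langle u(t_n)-u(t),\varphi_\delta\rangle_{H^s}|=|\langle u(t_n)-u(t),\lambda^2\varphi_\delta\rangle_{H^{s-1}}|\le \|u(t_n)-u(t)\|_{H^{s-1}}\|\varphi_\delta\|_{H^{s+1}}.
\]
For this fixed $\delta$, the norm $\|\varphi_\delta\|_{H^{s+1}}$ is finite, and the Lipschitz continuity of $u$ with values in $H^{s-1}$ forces the right-hand side to vanish as $t_n\to t$. Taking $n$ large enough then drives this summand below $\eta/2$, and the lemma follows.

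I do not anticipate a serious obstacle: the argument is a standard density manipulation, and the only substantive ingredient beyond the a priori bounds already established is the identity $\langle f,g\rangle_{H^s}=\langle f,\lambda^2 g\rangle_{H^{s-1}}$, which is purely formal. The only minor care needed is verifying that $J_\delta\varphi\in H^{s+1}$ with a $\delta$-dependent quantitative bound and that $J_\delta\varphi\to\varphi$ in $H^s$, both of which are standard properties of the Friedrichs mollifier introduced at the start of Section 2 and hold identically in the periodic and non-periodic cases.
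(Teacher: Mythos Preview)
Your proposal is correct and follows essentially the same approach as the paper: both arguments approximate $\varphi$ by a smoother test function (the paper uses $\psi\in\mathcal{S}$, you use $J_\delta\varphi$), split via the triangle inequality, control the tail term by Cauchy--Schwarz and the uniform $H^s$ bound, and control the main term by shifting one factor of $\lambda^2$ onto the test function to invoke the $H^{s-1}$ Lipschitz continuity. The identity $\langle f,g\rangle_{H^s}=\langle f,\lambda^2 g\rangle_{H^{s-1}}$ you single out is precisely what underlies the paper's inequality $|\langle u(t_n)-u(t),\psi\rangle_{H^s}|\le\|u(t_n)-u(t)\|_{H^{s-1}}\|\psi\|_{H^{s+1}}$, so the two proofs are structurally identical.
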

\begin{proof}
    Let $\ee >0$.  We will choose a $\psi \in \mathcal{S}(\rr)$ with $\|\vp -\psi\|_{H^s} < \ee/8\|u_0\|_{H^s}$.  By use of the triangle inequality, we have that
    \begin{equation} \label{inprodest}
        |\langle u(t_n)-u(t),\varphi\rangle_{H^s}| \leq |\langle u(t_n)-u(t),\varphi -\psi\rangle_{H^s}|+|\langle u(t_n)-u(t),\psi\rangle_{H^s}|.
    \end{equation}
    For the first term in \eqref{inprodest}, we have 
    \begin{equation}\label{inprodest1}
        |\langle u(t_n)-u(t),\varphi -\psi\rangle_{H^s}| \leq (\|u(t_n)\|_{H^s}+\|u(t)\|_{H^s})\|\varphi-\psi\|_{H^s}<\ee/2.
    \end{equation}
    For the second term, we find the following estimate.
    \begin{align}\label{inprodest2}
        |\langle u(t_n)-u(t),\psi\rangle_{H^s}| &\leq \|u(t_n) - u(t)\|_{H^{s-1}}\|\psi\|_{H^{s+1}} \nonumber \\
        &\lesssim \|\p_tu(t)\|_{H^{s-1}}\|\psi\|_{H^{s+1}}|t_n - t| \nonumber \\
        &\lesssim\|\psi\|_{H^{s+1}}|t_n-t|.
    \end{align}
    By choosing $n$ sufficiently large, we may bound \eqref{inprodest2} by $\ee/2$ so that we achieve $$|\langle u(t_n)-u(t),\varphi\rangle_{H^s}|<\ee,$$
    which completes the proof.
\end{proof}

\begin{lemma}
    For the solution $u \in L^\infty([0,T]; H^s)\cap Lip([0,T];H^{s-1})$, we have $$\lim_{n \to \infty}\|u(t_n)\|_{H^s} = \|u(t)\|_{H^s}.$$
\end{lemma}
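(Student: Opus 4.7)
My plan is to argue that $t\mapsto\|u(t)\|_{H^s}^2$ is Lipschitz continuous on $[0,T]$, which at once gives the desired limit with a quantitative rate. I would prove this by applying an energy estimate to the \emph{mollified} solution $J_\ee u$ and then passing to the limit $\ee\to 0$.

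Since $u$ solves $\p_t u=F(u)$, where $F(u)$ denotes the right-hand side of \eqref{QR-nl}, applying $J_\ee$ gives $\p_t J_\ee u=J_\ee F(u)$, so that
\[
\tfrac{1}{2}\tfrac{d}{dt}\|J_\ee u(t)\|_{H^s}^2 \;=\; \langle \lambda^s J_\ee F(u),\lambda^s J_\ee u\rangle_{L^2}.
\]
Expanding $F(u)$ and repeating the Kato-Ponce commutator (Lemma \ref{KP}), the integration by parts on the resulting transport-like piece, and the algebra property for $H^s$ exactly as in the earlier existence-time estimate, I expect to obtain
\[
\left|\tfrac{d}{dt}\|J_\ee u(t)\|_{H^s}^2\right| \;\le\; C\|u(t)\|_{H^s}^3 + R_\ee(t),
\]
where $R_\ee(t)$ collects the commutators generated by repositioning $J_\ee$ across the nonlinearity via its self-adjointness in $L^2$. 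Lemma \ref{linop} guarantees that each such remainder tends to $0$ pointwise in $t$ as $\ee\to 0$, while being dominated uniformly in $\ee$ by a constant multiple of $\|u\|_{L^\infty_tH^s}^3$; the dominated convergence theorem then gives $\int_0^T|R_\ee(s)|\,ds\to 0$. Integrating between $t$ and $t_n$ and then sending $\ee\to 0$ (using $\|J_\ee v\|_{H^s}\to\|v\|_{H^s}$ for any $v\in H^s$) yields the $\ee$-free estimate
\[
\bigl|\|u(t_n)\|_{H^s}^2-\|u(t)\|_{H^s}^2\bigr|\;\le\; C\|u\|_{L^\infty_tH^s}^3\,|t_n-t|,
\]
and $n\to\infty$ completes the proof. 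As a cross-check, Lemma \ref{inprod} yields the weak convergence $u(t_n)\rightharpoonup u(t)$ in $H^s$, and the resulting weak lower semicontinuity $\|u(t)\|_{H^s}\le\liminf_n\|u(t_n)\|_{H^s}$ matches the Lipschitz bound from below.

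The main technical obstacle is controlling $R_\ee(t)$. In the earlier energy estimate for $u_\ee$ the inner factors $\lambda^2 J_\ee u_\ee$ already carried their own mollifiers, which simplified the bookkeeping; here $F(u)$ is unmollified on the inside, so moving the outer $J_\ee$ past the quadratic nonlinearities generates new commutators such as $[J_\ee,\lambda^2 u]$ and factors of the form $(I-J_\ee^2)\lambda^s u$. Their absorption requires choosing the indices $r\le s$ in Lemma \ref{linop} carefully, balancing the one-derivative loss inherent to the map $F:H^s\to H^{s-1}$ against the fact that $u$ is only known to lie in $H^s$, not in $H^{s+1}$.
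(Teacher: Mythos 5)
Your overall architecture is the same as the paper's: mollify the solution (not the equation's data), run the $H^s$ energy estimate on $J_\ee u$ using $\p_t J_\ee u = J_\ee F(u)$, obtain a bound on $\frac{d}{dt}\|J_\ee u(t)\|_{H^s}^2$ that is uniform in $\ee$, and pass to the limit using the pointwise convergence $\|J_\ee u(t)\|_{H^s}\to\|u(t)\|_{H^s}$. The paper phrases this as: each $F_\ee(t)=\|J_\ee u(t)\|_{H^s}$ is Lipschitz with a Lipschitz constant $c_s\|u_0\|_{H^s}^2$ independent of $\ee$, and $F_\ee\to F$ pointwise, hence $F$ is Lipschitz. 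Note in particular that no remainder has to \emph{vanish}: uniform boundedness of $F_\ee'$ suffices, so your dominated-convergence step and the weak-convergence cross-check are superfluous if the estimate is set up correctly.

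The genuine gap is in how you propose to control $R_\ee$. The worst term is the twisted-Burgers contribution: after Kato--Ponce it reduces to $\langle J_\ee[\lambda^2 u\,\p_x \lambda^s u], J_\ee \lambda^s u\rangle$, where $\lambda^s u$ is only in $L^2$ and $\p_x\lambda^s u$ only in $H^{-1}$. Because the inner factor is unmollified, you cannot integrate by parts directly, and Lemma \ref{linop} cannot absorb the mismatch: that lemma buys smallness of $I-J_\ee$ only at the price of derivatives ($o(\ee^{s-r})$ from $H^s$ to $H^r$, $r<s$), and here there is no spare regularity to spend -- the relevant pairing sits at the $H^{-1}\times L^2$ endpoint, where $(I-J_\ee)$ or $(I-J_\ee^2)$ is merely bounded, not small, and any attempt to trade $\ee$-smallness for a derivative produces a factor $\ee^{-1}$ that cancels it. So the claim that ``Lemma \ref{linop} guarantees each remainder tends to $0$ pointwise while being dominated uniformly by $\|u\|_{L^\infty_t H^s}^3$'' is precisely the unproved step, and the index-juggling you describe in your last paragraph will not close it. The correct tool is the standard Friedrichs-mollifier commutator estimate, uniform in $\ee$: $\|[J_\ee, f]\p_x v\|_{L^2}\le C\|\p_x f\|_{L^\infty}\|v\|_{L^2}$. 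Writing $J_\ee[\lambda^2 u\,\p_x\lambda^s u]=\lambda^2 u\,\p_x J_\ee\lambda^s u+[J_\ee,\lambda^2 u]\p_x\lambda^s u$, the first piece integrates by parts against $J_\ee\lambda^s u$ and the second is bounded by $C\|u\|_{H^s}^3$ uniformly in $\ee$; this yields $|\frac{d}{dt}\|J_\ee u(t)\|_{H^s}^2|\lesssim\|u\|_{H^s}^3$ with no $R_\ee$ at all, after which your integration between $t$ and $t_n$ and the limit $\ee\to 0$ go through and recover the paper's conclusion.
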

\begin{proof}
    We begin by defining the functions 
    \begin{equation}
        F(t) \doteq \|u(t)\|_{H^s} \ \ \text{and} \ \ F_\ee(t) \doteq \|J_\ee u(t)\|_{H^s}.
    \end{equation}
    By Lemma \ref{linop}, we have that $F_\ee \to F$ pointwise as $\ee \to 0$.  We will prove that each $F_\ee$ is Lipschitz, and that the Lipschitz constants for this family of functions are uniformly bounded by some independent constant.  This will imply that $F$ is also Lipschitz continuous and its Lipschitz constant is bounded by this same independent constant which will complete our proof.  Applying both sides of \eqref{QR-nl} by the operator $J_\ee$, we obtain
    \begin{equation}
        \p_tJ_\ee u = -J_\ee\lambda^{-2} (\lambda^2 u \lambda^2 u_x)-J_\ee\lambda^{-4}[\lambda^2 u_x \lambda^2 u_{xx}+2\lambda^2 u \lambda^2 u_x].
    \end{equation}
    Proceeding in the same manner as section two on energy estimates, one may obtain the following $$\frac{d}{dt}\|J_\ee u(t)\|_{H^s} \lesssim \|u(t)\|_{H^s}^2.$$
    Using our lifespan estimate, we obtain $$|F_\ee'(t)| = \left|\frac{d}{dt}\|J_\ee u(t)\|_{H^s}\right|\leq c_s\|u_0\|_{H^s}^2.$$ 
    Thus, $F(t)$ is Lipschitz.  This concludes our proof.
\end{proof}

Now, observe that
\begin{equation}
    \|u(t_n) - u(t)\|_{H^s}^2 = \|u(t_n)\|_{H^s}^2+\|u(t)\|_{H^s}^2 - \langle u(t_n),u(t)\rangle_{H^s} - \langle u(t),u(t_n)\rangle_{H^s}.
\end{equation}
Using Lemma \ref{inprod} with $\varphi = u$, we have that $\lim_{n \to \infty}\langle u(t_n),u(t)\rangle=\|u(t)\|_{H^s}^2$.  Coupled with our last lemma, we find that $$\|u(t_n) - u(t)\|_{H^s} \to 0 \ \ \text{as} \ \ n \to \infty.$$  Hence, $u \in C([0,T]; H^s).$

\subsection{Uniqueness of Solutions} Our second step in our proof of well-posedness is to prove that solutions are unique. We shall accomplish this by providing an energy estimate on the difference between two solutions in $H^\sigma$, $5/2<\sigma<s-1$. Indeed, we let $u$ and $v$ be two solutions to equation \eqref{QR-nl} corresponding to initial data $u(x,0)=u_0(x)$ and $v(x,0)=v_0(x)$ respectively. 
That is to say,
\begin{equation} 
 \begin{cases} u_t  + \lambda^{-2} \left[ \lambda ^2u \lambda^2 \partial_x u   \right] 
  = 
  - \lambda^{-4}  \partial_x \left[  \frac12 \left(\lambda ^2\partial_x  u  \right )^2  +   
\left( \lambda ^2  u  \right )^2  \right ]   
\\
u(x,0) = u_0(x) ,
\end{cases}
\end{equation}
and
\begin{equation} 
 \begin{cases} 
  v_t  + \lambda^{-2} \left[ \lambda^2 v \lambda^2\partial_x v  \right ] 
  = 
  - \lambda^{-4}  \partial_x \left[  \frac12 \left(\lambda^2 \partial_x  v  \right )^2  +   
\left( \lambda ^2  v  \right )^2 \right  ]   
\\
v(x,0) = v_0(x). 
\end{cases}
\end{equation}
Subtracting $v$ from $u$ and setting $w = u-v$, we observe that $w$ is a solution to 
\begin{equation} 
 \begin{cases} w_t  +  \frac12 \lambda^{-2}\partial_x  \left[ (\lambda^2 u +\lambda^2 v) \lambda ^2  w   \right] 
  = 
  - \lambda^{-4}  \partial_x \left[  \frac12  (\lambda^2\partial_x u +\lambda ^2\partial_xv) \lambda   ^2\partial_x w  +   
  (\lambda^2 u +\lambda^2 v) \lambda^2   w     \right ]   
\\
w(x,0) = u_0(x) -v_0(x)
\end{cases}  
 \end{equation}

For $5/2<\sigma<s-1 $
 fixed, we calculate the $H^\sigma$ energy of $w$ giving us the equation 
 \begin{align}    
 \frac{d}{dt} \| w\|_{H^\sigma}^2 =& -   \frac12 \int \lambda^\sigma \left[ \lambda^{-2}\partial_x  \left[ (\lambda^2 u +\lambda^2 v) \lambda^2   w   \right]  \right]   \lambda^\sigma w dx 
     \label{firstterm-w}
 \\ & 
  - \int \lambda^\sigma \lambda^{-4}  \partial_x \left[  \frac12  ( \lambda^2 \partial_x u +\lambda^2  \partial_xv) \lambda^2  \partial_x w   \right ]    \lambda^\sigma w dx
    \label{secondterm-w}
  \\ &
 - \int \lambda^\sigma \lambda^{-4}  \partial_x \left[   
  (\lambda^2 u +\lambda^2v)\lambda^2   w     \right ]    \lambda^\sigma w dx .
  \label{thirdterm-w}
\end{align}
For the third integral, \eqref{thirdterm-w}, we apply the Cauchy-Schwarz inequality followed by the Sobolev embedding theorem to find 
 \begin{align}    \notag 
 \int \lambda^\sigma \lambda^{-4}  \partial_x \left[   
  (\lambda^2 u +\lambda^2v)\lambda^2   w     \right ]    \lambda^\sigma w dx  
 \le &\
  C 
  \left\| (\lambda^2 u +\lambda^2v)\lambda^2   w
  \right\| _{H^{\sigma-2} } 
  \|w\|_{H^\sigma} .
\end{align}
The algebra property followed by the solution size estimate allows us to bound this by 
 \begin{align}    \notag 
 \int \lambda^\sigma \lambda^{-4}  \partial_x \left[   
  (\lambda^2 u +\lambda^2v)\lambda^2   w     \right ]    \lambda^\sigma w dx  
 \le & \
  C 
 \left(  \left\| u_0   \right\| _{H^{s} }  +  \left\| v_0   \right\| _{H^{s} } \right)  
  \|w\|_{H^\sigma} ^2 .
\end{align}

For the second integral, the one found in \eqref{secondterm-w}, we again apply the Cauchy-Schwarz inequality to find 
 \begin{align}    
 \eqref{secondterm-w}\ \dot=  & \ 
   \frac12  \int \lambda^{\sigma-4}    \partial_x \left[    ( \lambda^2 \partial_x u +\lambda^2  \partial_xv) \lambda^2  \partial_x w   \right ]    \lambda^\sigma w dx 
  \\  \le & \
 C \|( \lambda^2 \partial_x u +\lambda^2  \partial_xv) \lambda^2  \partial_x w  \|_{H^{\sigma - 3} }
 \| w  \|_{H^{\sigma } }. 
\end{align}
We now apply the following estimate found in Lemma 2 of \cite{himonasholmes}. 
\begin{lemma}\label{HH}
    If $r>1/2$, then 
    $$ \| fg\|_{H^{r-1}} \le c_r \| f\|_{H^r}\|g\|_{H^{r-1}}. 
    $$
\end{lemma}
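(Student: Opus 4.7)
The plan is to establish Lemma \ref{HH} by frequency-space analysis via Bony's paraproduct decomposition $fg = T_f g + T_g f + R(f,g)$. Here $T_f g = \sum_j S_{j-1}f \cdot \Delta_j g$ is the low-high paraproduct, $T_g f$ the high-low analogue, and $R(f,g) = \sum_{|j-k|\le 1}\Delta_j f \cdot \Delta_k g$ the resonant remainder, with $\Delta_j$ denoting the standard Littlewood-Paley projections onto frequency $\sim 2^j$ and $S_{j-1} = \sum_{i<j-1}\Delta_i$. Because each piece is frequency-localized in a known way, the $H^{r-1}$ norm decomposes dyadically and each piece admits a term-by-term estimate.

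For the low-high paraproduct $T_f g$, each summand has frequency $\sim 2^j$, and H\"older's inequality combined with the Sobolev embedding $H^r \hookrightarrow L^\infty$ (valid precisely when $r > 1/2$) gives
$$ \|T_f g\|_{H^{r-1}} \lesssim \|f\|_{L^\infty}\|g\|_{H^{r-1}} \le c_r \|f\|_{H^r}\|g\|_{H^{r-1}}. $$
For the high-low paraproduct $T_g f$, the low-frequency control of $g$ is obtained from Bernstein's inequality $\|\Delta_i g\|_{L^\infty} \lesssim 2^{i/2}\|\Delta_i g\|_{L^2}$ in one dimension, and a Cauchy-Schwarz over the dyadic scale $i$ (splitting the cases $1/2 < r < 3/2$ and $r \ge 3/2$) closes the bound with the same constant dependence.

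The main obstacle is the resonant remainder $R(f,g)$, where the high-high interaction can produce output at arbitrarily low frequencies $2^l \ll 2^j$. Using Bernstein to bound $\|\Delta_j f \cdot \Delta_j g\|_{L^2} \lesssim 2^{j/2}\|\Delta_j f\|_{L^2}\|\Delta_j g\|_{L^2}$ and reindexing with $a_j = 2^{jr}\|\Delta_j f\|_{L^2}$, $b_j = 2^{j(r-1)}\|\Delta_j g\|_{L^2}$ reduces the problem to summing a dyadic series whose convergence requires $\sum_j 2^{j(1-2r)} < \infty$, i.e., $r > 1/2$. All three pieces thus depend on the sharp hypothesis $r > 1/2$, which is precisely the Sobolev embedding threshold $H^r \hookrightarrow L^\infty$ in one dimension; this is what makes the hypothesis natural and sharp. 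Summing the three contributions via the triangle inequality then yields the claim.
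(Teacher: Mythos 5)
You should first know that the paper does not prove this lemma at all: it is quoted verbatim as Lemma 2 of \cite{himonasholmes}, where it is established by a direct Fourier-side convolution estimate (splitting the convolution $\widehat{fg}(\xi)=\int\widehat f(\xi-\eta)\widehat g(\eta)\,d\eta$ into low- and high-frequency regions and applying Cauchy--Schwarz). Your paraproduct route is therefore an independent, legitimate way to prove the statement, and your treatment of the two paraproducts is sound: $\|T_fg\|_{H^{r-1}}\lesssim\|f\|_{L^\infty}\|g\|_{H^{r-1}}$ with $H^r\hookrightarrow L^\infty$ for $r>1/2$, and the high--low piece closes via Bernstein on $S_{j-1}g$ with the case split you indicate (with only a harmless logarithmic borderline at $r=3/2$).

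The gap is in the resonant piece, precisely in the range $1/2<r\le 3/4$. With your one-factor Bernstein bound and the normalizations $a_j=2^{jr}\|\Delta_jf\|_{L^2}$, $b_j=2^{j(r-1)}\|\Delta_jg\|_{L^2}$, each block obeys $\|\Delta_jf\,\widetilde\Delta_jg\|_{L^2}\lesssim 2^{j(3/2-2r)}a_jb_j$. For $r\ge 1$ you may pay $2^{j(r-1)}$ for the $H^{r-1}$ norm of a piece supported in frequencies $\lesssim 2^j$, get the weight $2^{j(1/2-r)}$, and sum; but for $r<1$ the best crude bound is $\|\cdot\|_{H^{r-1}}\le\|\cdot\|_{L^2}\lesssim 2^{j(3/2-2r)}a_jb_j$, and since $(a_jb_j)$ is only $\ell^1$, the resulting series need not converge once $r\le 3/4$, where the weight $2^{j(3/2-2r)}$ grows; in particular the series you cite, $\sum_j 2^{j(1-2r)}$, is not the one that actually arises, so the reduction as stated does not close on the full range $r>1/2$. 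The missing idea is the low-frequency gain on the \emph{output} block: use $\|\Delta_l(\Delta_jf\,\widetilde\Delta_jg)\|_{L^2}\lesssim 2^{l/2}\|\Delta_jf\|_{L^2}\|\Delta_jg\|_{L^2}$ (Bernstein from $L^1$ to $L^2$ at frequency $2^l$), which gives
\begin{equation*}
2^{l(r-1)}\|\Delta_l R(f,g)\|_{L^2}\ \lesssim\ \sum_{j\ge l-C} 2^{(l-j)(r-1/2)}\,2^{-j(r-1/2)}\,a_jb_j ,
\end{equation*}
and the kernel $2^{(l-j)(r-1/2)}\mathbf{1}_{\{l\le j+C\}}$ is summable in $l-j$ exactly because $r>1/2$; Young's inequality for discrete convolutions then yields $\|R(f,g)\|_{H^{r-1}}\lesssim\|a\|_{\ell^2}\|b\|_{\ell^2}=\|f\|_{H^r}\|g\|_{H^{r-1}}$. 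With this replacement your proof is complete on the whole range $r>1/2$ (note that for the resonant piece the hypothesis enters as $r+(r-1)>0$ together with $2r-3/2\ge r-1$, not through the embedding $H^r\hookrightarrow L^\infty$), and it is genuinely different from, though morally parallel to, the convolution argument in \cite{himonasholmes}.
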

Applying the lemma with $r = \sigma - 2>1/2$ we obtain 
 \begin{align}    
 \eqref{secondterm-w} 
    \le & \ 
 C \|( \lambda^2 \partial_x u +\lambda^2  \partial_xv) \|_{H^{\sigma - 2} }  \|  \lambda^2  \partial_x w  \|_{H^{\sigma - 3} }
 \| w  \|_{H^{\sigma } }. 
\end{align}
Using the solution size estimate and $s > \sigma +1 $, we have 
 \begin{align}    
 \eqref{secondterm-w} 
    \le & \ C \left(  \left\| u_0   \right\| _{H^{s} }  +  \left\| v_0   \right\| _{H^{s} } \right)  
  \|w\|_{H^\sigma} ^2 . 
\end{align}
To bound the first term, \eqref{firstterm-w}, we shall use the following Calderon-Coifman-Meyer type commutator estimate, which follows from proposition 4.2 in Taylor \cite{taylor}
\begin{lemma}\label{ccm}
    If $r>3/2$ and $0\le \rho+1 \le r $, then 
    $$
    \| \lambda^\rho\partial_x [fv] - f \lambda^\rho\partial_x [v]\|_{L^2 } \le C \| f\|_{H^r } \| v\|_{H^\rho }.
    $$
\end{lemma}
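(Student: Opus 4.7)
The plan is to recognize this as a standard Calder\'on--Coifman--Meyer commutator estimate and prove it via a Fourier-side analysis. Writing $a(\xi)=i\xi(1+\xi^2)^{\rho/2}$ for the symbol of $\lambda^\rho\partial_x$, the commutator
$$
C(f,v)\doteq\lambda^\rho\partial_x[fv]-f\,\lambda^\rho\partial_x[v]
$$
has Fourier transform
$$
\widehat{C(f,v)}(\xi)=\int\bigl[a(\xi)-a(\eta)\bigr]\hat f(\xi-\eta)\hat v(\eta)\,d\eta,
$$
and the crux is that although $a$ is of order $\rho+1$, the difference $a(\xi)-a(\eta)$ gains one derivative, so $[\lambda^\rho\partial_x,f]$ behaves like a pseudodifferential operator of order $\rho$ rather than $\rho+1$.

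Next I would decompose $f=\sum_j\Delta_j f$ and $v=\sum_k\Delta_k v$ into Littlewood--Paley pieces and split into the three standard paraproduct regimes. In the low--high regime $j\le k-2$, the mean value theorem yields $|a(\xi)-a(\eta)|\lesssim 2^{k\rho}|\xi-\eta|$ on the relevant Fourier supports, so that piece is pointwise dominated by $|\partial_x\Delta_j f|\,|\lambda^\rho\Delta_k v|$; summing in $j$ and using the Sobolev embedding $H^r\hookrightarrow W^{1,\infty}$ (which needs $r>3/2$) gives a bound of $\|\partial_x f\|_{L^\infty}\|\lambda^\rho v\|_{L^2}\lesssim\|f\|_{H^r}\|v\|_{H^\rho}$. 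In the diagonal and high--low regimes ($j\ge k-1$), the symbol is crudely bounded by $|a(\xi)-a(\eta)|\lesssim(1+|\xi|)^{\rho+1}\lesssim(1+|\xi-\eta|)^{\rho+1}$, reducing matters to terms of the form $\|\lambda^{\rho+1}\Delta_j f\cdot\Delta_k v\|_{L^2}$; the hypothesis $\rho+1\le r$ lets us absorb $\lambda^{\rho+1}\Delta_j f$ into $\|f\|_{H^r}$, the factor $\Delta_k v$ is estimated in $L^\infty$ via Bernstein against $\|v\|_{H^\rho}$, and the dyadic sum converges thanks to the off-diagonal gain $2^{-\varepsilon(j-k)}$ coming from the slack $r-(\rho+1)\ge 0$ combined with $r>3/2$.

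The main obstacle is precisely the high--low regime, since there the operator $\lambda^\rho\partial_x$ dumps its full order onto $f$; both hypotheses of the lemma are consumed here, with $\rho+1\le r$ needed for the order count and $r>3/2$ needed for the summability of the resulting dyadic series. An alternative route, and the one the authors clearly prefer, is to invoke Proposition 4.2 of Taylor \cite{taylor} directly, which proves exactly this type of Calder\'on commutator bound at the level of general pseudodifferential operators; the Littlewood--Paley argument above would serve as a self-contained verification in the specific scalar setting needed here.
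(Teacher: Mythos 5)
Your argument is correct in outline, but it is a genuinely different route from the paper's: the paper gives no self-contained proof at all, simply deducing the lemma from Proposition 4.2 of Taylor's commutator-estimates paper (the pseudodifferential Calder\'on--Coifman--Meyer bound), exactly the ``alternative route'' you mention at the end. Your Littlewood--Paley/paraproduct sketch is the standard hands-on verification in this scalar setting, and it has the virtue of making visible where each hypothesis enters: $r>3/2$ through $H^{r}\hookrightarrow W^{1,\infty}$ in the low--high regime, and $\rho+1\le r$ (together with Bernstein and the $r>3/2$ slack) in the diagonal and high--low regimes; the citation to Taylor buys brevity and covers the general operator case but hides this bookkeeping. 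One caveat on your low--high regime: the bound $|a(\xi)-a(\eta)|\lesssim 2^{k\rho}|\xi-\eta|$ does not literally give a \emph{pointwise} domination of that paraproduct piece by $|\partial_x\Delta_j f|\,|\lambda^\rho\Delta_k v|$; frequency-localized bilinear pieces are controlled only at the level of the multiplier, so you should either invoke a Coifman--Meyer-type bilinear multiplier bound, or write $a(\xi)-a(\eta)=(\xi-\eta)\int_0^1 a'(\eta+t(\xi-\eta))\,dt$ and estimate the resulting operator, before taking $L^2\times L^\infty$ H\"older. With that repair the sketch closes, and the diagonal/high--low bookkeeping works as you describe, including at the endpoint $\rho+1=r$.
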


We let $f = \lambda^2 u +\lambda^2 v$, thus we have 
 \begin{align}    
 \eqref{firstterm-w} 
    \ \dot = & \ 
     -   \frac12 \int \lambda^{\sigma-2 } \partial_x  \left[ f \lambda^2   w   \right]     \lambda^\sigma w dx 
    \\
    \le & \ 
    \left|  \int  \left(  \lambda^{\sigma-2 } \partial_x  \left[ f \lambda^2   w   \right] - f \lambda^{\sigma-2 } \partial_x   \lambda^2    w      \right)  \lambda^\sigma w dx  \right| 
    +    \left|  \int  f \lambda^{\sigma } \partial_x       w   \lambda^\sigma w dx 
    \right| 
    . \label{firstterm-ce}
\end{align}
For the first term in inequality \eqref{firstterm-ce}, we apply the Cauchy-Schwarz inequality followed by Lemma \ref{ccm} with $\rho = \sigma-2$ and $r =s-2$ to obtain 
$$
 \left|  \int  \left(  \lambda^{\sigma-2 } \partial_x  \left[ f \lambda^2   w   \right] - f \lambda^{\sigma } \partial_x       w      \right)  \lambda^\sigma w dx  \right| 
 \le C\|f\|_{H^{s-2}}\|\lambda^2 w\|_{H^{\sigma -2}} \|w\|_{H^\sigma} \lesssim \| w\|_{H^\sigma }^2.
$$
For the second term in inequality \eqref{firstterm-ce}, we integrate by parts to find 
$$
  \left|  \int  f \lambda^{\sigma } \partial_x       w   \lambda^\sigma w dx 
    \right|  
    \approx
      \left|  \int  \partial_x   f (\lambda^{\sigma }         w )^2   dx 
    \right| 
    \lesssim \| \partial_x f \|_{L^\infty}  \| w\|_{H^\sigma }^2.
$$
Since $s>7/2$, by the Sobolev embedding theorem and the solution size estimate, we have 
$ \| \partial_x f \|_{L^\infty}  \lesssim \|u_0\|_{H^s } + \|v_0\|_{H^s }$. 

Putting our estimates for \eqref{firstterm-w}, \eqref{secondterm-w}, and \eqref{thirdterm-w} together, we have proved the following lemma. 
\begin{lemma}
    Let $s>7/2$ and $5/2<\sigma<s-1$. If $u$ and $v$ are two solutions to the RZQ equation on the time interval $[0, T]$ corresponding to initial data $u_0, \ v_0 \in H^s$ respectively, then their difference $w = u-v$ satisfies the following differential inequality 
    \begin{align}
       \frac{d}{dt} \| w\|_{H^\sigma}^2 \le C\| w\|_{H^\sigma}^2
     \quad \text{ with } \quad    \|w(0)\| _{H^\sigma}^2=     \|u_0-v_0\| _{H^\sigma}^2.
    \end{align}
\end{lemma}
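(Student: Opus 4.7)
The plan is to derive the differential inequality by carrying out an $H^\sigma$ energy estimate on the difference $w = u - v$. First I would subtract the two copies of the RZQ equation, rewriting each nonlinear difference in a symmetric form where $w$ stands alone; for instance $\lambda^2 u\, \lambda^2 u_x - \lambda^2 v\, \lambda^2 v_x = \frac{1}{2}\partial_x[(\lambda^2 u + \lambda^2 v)\lambda^2 w]$, with analogous identities handling the two remaining nonlinearities. Applying $\lambda^\sigma$, pairing with $\lambda^\sigma w$ in $L^2$, and using the self-adjointness of the Bessel potential then yields $(d/dt)\|w\|_{H^\sigma}^2$ as the sum of three integrals, one for each nonlinear piece.

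Two of these integrals come with the strong smoothing $\lambda^{-4}$ in front, which gives them plenty of regularity room. For the most benign term $\int \lambda^\sigma \lambda^{-4}\partial_x[(\lambda^2 u + \lambda^2 v)\lambda^2 w]\,\lambda^\sigma w\,dx$, Cauchy-Schwarz reduces matters to bounding $\|(\lambda^2 u + \lambda^2 v)\lambda^2 w\|_{H^{\sigma-2}}\|w\|_{H^\sigma}$, after which the Sobolev algebra property (available since $\sigma - 2 > 1/2$) combined with the solution-size estimate from Theorem \ref{WP} closes the bound by $(\|u_0\|_{H^s}+\|v_0\|_{H^s})\|w\|_{H^\sigma}^2$. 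The intermediate term, carrying a $\partial_x$ on all of $u$, $v$, and $w$, is slightly more delicate: after Cauchy-Schwarz I would apply Lemma \ref{HH} with $r = \sigma - 2$ to distribute derivatives unevenly between an $H^{\sigma-2}$ factor (absorbing two derivatives of $u$ or $v$) and an $H^{\sigma-3}$ factor (absorbing $\lambda^2\partial_x w$), then invoke $s > \sigma + 1$ together with Theorem \ref{WP}.

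The main obstacle is the transport-like term $-\frac{1}{2}\int \lambda^{\sigma-2}\partial_x[f\,\lambda^2 w]\,\lambda^\sigma w\,dx$ with $f = \lambda^2 u + \lambda^2 v$, where only $\lambda^{-2}$ of smoothing is available and a naive Cauchy-Schwarz loses half a derivative. The standard remedy is a commutator split: write the integrand as $\bigl(\lambda^{\sigma-2}\partial_x[f\,\lambda^2 w] - f\,\lambda^{\sigma-2}\partial_x\lambda^2 w\bigr)\lambda^\sigma w + f\,\lambda^\sigma \partial_x w \cdot \lambda^\sigma w$. The commutator piece is controlled by the Calderon-Coifman-Meyer estimate (Lemma \ref{ccm}) with $\rho = \sigma - 2$ and $r = s - 2$, where the hypothesis $s - 2 > 3/2$ is exactly what is needed. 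The remaining piece is handled by integration by parts to transfer $\partial_x$ off $w$ onto $f$, producing a factor $\|\partial_x f\|_{L^\infty}\|w\|_{H^\sigma}^2$, which by Sobolev embedding ($s > 7/2$) and Theorem \ref{WP} is bounded by $(\|u_0\|_{H^s}+\|v_0\|_{H^s})\|w\|_{H^\sigma}^2$.

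Summing the three bounds yields the claimed differential inequality with a constant $C$ depending only on $s$, $\sigma$, and the $H^s$ sizes of $u_0$ and $v_0$; the initial condition $\|w(0)\|_{H^\sigma}^2 = \|u_0 - v_0\|_{H^\sigma}^2$ is immediate. Gronwall's inequality would then deliver the uniqueness and continuous dependence statements needed to complete the well-posedness argument.
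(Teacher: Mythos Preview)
Your proposal is correct and follows essentially the same approach as the paper: the same symmetric rewriting of the difference equation, the same three-integral energy decomposition, and the same tools applied term by term---Cauchy--Schwarz plus the algebra property for the most regularizing term, Lemma~\ref{HH} with $r=\sigma-2$ for the intermediate term, and the commutator split via Lemma~\ref{ccm} (with $\rho=\sigma-2$, $r=s-2$) followed by integration by parts for the transport-like term. Even the parameter choices and the concluding Sobolev embedding for $\|\partial_x f\|_{L^\infty}$ match the paper's argument exactly.
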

Solving this differential inequality, we obtain 
$$
\| w(t) \|_{H^\sigma}^2 = \| u(t) -v(t) \|_{H^\sigma}^2 \le e^{Ct}  \| u _0-v_0 \|_{H^\sigma}^2. 
$$
Uniqueness of solutions follows immediately. 
\subsection{Continuous Dependence}
Here we shall show that the data-to-solution map $u_0 \mapsto u(t)$ for the ivp corresponding to the RZQ equation is continuous from $H^s$ to $C([0, T]; H^s)$. 

Fix $u_0 \in H^s$ and let $\{u_{0,n}\} \rightarrow u_0$ in $H^s$. Let $u$ be the solution to the RZQ equation with initial data $u_0$, and $u_n$ be the solution to the RZQ equation corresponding to initial data $u_{0,n}$. We shall show that 
$$
\lim_{n\rightarrow\infty} u_n = u \quad \text{ in } \quad C([0,T]; H^s). 
$$
It suffices to show that for $\eta >0$, there exists an $N>0$ such that for all $n>N$ we have $\|u-u_n\|_{C([0,T];H^s)}<\eta.$  To overcome the difficulty of the twisted Burgers terms in our estimates, we will use information from the mollified problem.  Let $u^\ee$ and $u_n^\ee$ represent the solutions to the mollified problem \eqref{mollifiedQR} with initial data $J_\ee u_0$ and $J_\ee u_{0,n}$ respectively.  Then, by an application of the triangle inequality, we have 
\begin{equation}\label{dep-est}
    \|u-u_n\|_{C([0,T];H^s)} \leq \|u-u^\ee\|_{C([0,T];H^s)} + \|u^\ee-u_n^\ee\|_{C([0,T];H^s)} + \|u_n^\ee-u_n\|_{C([0,T];H^s)}.
\end{equation}
Since we know that $u^\ee \to u$ and $u_n^\ee \to u_n$ in $C([0,T];H^s)$, we may choose $\ee>0$ sufficiently small to bound both the first and third terms in \eqref{dep-est} by $\eta/3$.  Therefore, it remains to find sufficient estimates on the difference $v=u^\ee-u_n^\ee$ in the $C([0,T];H^s)$ norm.

We first note that this difference satisfies the following equation
\begin{equation}\label{molli-dif}
    \begin{cases}
v_t+\frac12J_\ee\lambda^{-2}\p_x\left[\lambda^2 (J_\ee w)\lambda^2(J_\ee v)\right] = -\p_x\lambda^{-4}\left[\frac12(\lambda ^2\p_x w)(\lambda ^2\p_x v)+(\lambda^2 w)(\lambda^2 v)\right] \\ 
        v_0 = J_\ee u_0-J_\ee u_{0,n},
    \end{cases}
\end{equation}
where $w = u^\ee+u_n^\ee$.
Applying the operator $\lambda^s$ to \eqref{molli-dif}, multiplying by $\lambda^sv$ and integrating over the torus yields
\begin{align}
    \frac12\frac{d}{dt}\|v\|_{H^s}^2 &= -\int_{\ci}\lambda^s\left[\frac12J_\ee \lambda^{-2}\p_x\left[\lambda ^2(J_\ee w)\lambda^2(J_\ee v)\right]\right]\lambda^sv dx \nonumber \\ 
    &- \int_{\ci}\lambda^s\left[\p_x\lambda^{-4}\left[\frac12(\lambda ^2\p_x w)(\lambda^2 \p_x v)+(\lambda ^2w)(\lambda ^2v)\right]\right]\lambda^s v dx \nonumber \\
    &=I+II.
\end{align}
We first handle integral $II$ by applying Cauchy-Schwarz, the algebra property, and our solution size estimate to obtain
\begin{equation}\label{int2}
    |II| \lesssim \|w\|_{H^s}\|v\|_{H^s}^2 \lesssim \|v\|_{H^s}^2.
\end{equation}
Applying similar methods to the first integral along with the property $\|J_\ee f\|_{H^s} \leq \|f\|_{H^s}$, we find that
\begin{equation}
    |I| \lesssim \|J_\ee w\|_{H^{s+1}}\|J_\ee v\|_{H^{s+1}}\|v\|_{H^s}.
\end{equation}
We note that $\|f\|_{H^{s+1}} = \|f\|_{H^s}+\|\p_xf\|_{H^s}$ so that we may improve our estimate to 
\begin{equation}\label{int1}
    |I| \lesssim \frac{c}{\ee^2}\|v\|_{H^s}^2.
\end{equation}
Combining estimates \eqref{int2} and \eqref{int1}, we achieve the differential inequality
\begin{equation}\label{difeq1}
    \frac12\frac{d}{dt}\|v\|_{H^s}^2 \lesssim \frac{c}{\ee^2}\|v\|_{H^s}^2.
\end{equation}
Upon solving \eqref{difeq1}, we find obtain
\begin{equation}
    \|u^\ee - u_n^\ee\|_{H^s} \lesssim e^{\frac{c}{\ee^2}t}\|J_\ee(u_0 - u_{0,n})\|_{H^s} \leq e^{\frac{c}{\ee^2}t}\|u_0 - u_{0,n}\|_{H^s}.
\end{equation}
We may then choose $n$ sufficiently large so that 
\begin{equation}
    \|u_0 - u_{0,n}\|_{H^s} \leq e^{-\frac{c}{\ee^2}t}\frac{\eta}{3}.
\end{equation}
This gives us that the second term in \eqref{dep-est} may also be bounded by $\eta/3$ which completes the proof.

\textbf{Remark:}  Well-posedness on the real line may be proven in a similar manner.  Indeed, we may duplicate the proofs for continuous dependence and uniqueness.  In the subsection for existence, however, we must change our Friedrichs mollifier by fixing $j \in \mathcal{S}(\rr)$ with $\wh{j}(0) = 1$.  Then we may define $j_\ee(x) = (1/\ee)j(x/\ee)$ so that for each $\ee \in (0,1]$, we have $J_\ee f = j_\ee * f$.  Furthermore, in each of our replicated estimates we apply a cut-off function $\vp \in \mathcal{S}(\rr)$ so that we have convergence in the respective topologies as mentioned above.

  \section{Continuity of the data-to-solution map}
In this section, we shall prove that the data-to-solution map is not uniformly continuous. We will provide all of the details in the periodic case, and we will sketch the modifications and estimates needed in the non-periodic case. In order to prove these results, we find that it is convenient to write the RZQ equation in the following equivalent form
\begin{align}
      u_t  + uu_x + \frac12 \lambda^{-2} \partial_x (u_{xx})^2 
  & =  - \lambda^{-2}  \partial_x \left [(u_x)^2   \right ]  
  - \lambda^{-4} \partial_x \left [  ( \lambda^2   u)^2 +\frac12 (\lambda^2 u_x)^2 
   \right ]  .
\end{align}

\subsection{Nonuniform dependence on $\mathbb T$} 
Following the works cited in the introduction, we shall use the method of approximate solutions. We consider the following two sequences of functions corresponding to $\omega=\pm 1$
\begin{align}
    u^{ap}_{\omega, n}  (x,t) = \frac{ \omega }{n}+ \frac{1}{n^s}\cos(nx-\omega t). 
\end{align}
Our first result shows that these functions are indeed approximate solutions to the RZQ equation.
\begin{lemma}
We denote the error
\begin{align}
E \ \dot = &  \  \partial_t  u^{ap}_{\omega, n}  +   u^{ap}_{\omega, n} \partial_ x  u^{ap}_{\omega, n} + \frac12 \lambda^{-2} \partial_x ( \partial_x^2  u^{ap}_{\omega, n})^2 
 + \lambda^{-2}  \partial_x \left [( \partial_x  u^{ap}_{\omega, n} )^2   \right ]  
\\ &  +  \lambda^{-4} \partial_x \left [  ( \lambda  ^2   u^{ap}_{\omega, n} )^2 +\frac12 (\lambda ^2 \partial_x  u^{ap}_{\omega, n} )^2 
   \right ] ,
\end{align}
and have that for any $5/2< \sigma <s-1$, there exists a constant $C$ independent of $n$, such that
$$
\| E \|_{H^\sigma} \le C  \max\{ n^{\sigma+3-2s} , n^{2\sigma-2s}  \}  .
$$
\end{lemma}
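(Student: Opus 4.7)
The plan is a direct Fourier-space computation exploiting that $U := u^{ap}_{\omega,n}$ is supported on the modes $\{-n,0,n\}$, so every quadratic expression in $U$ is supported on $\{-2n,-n,0,n,2n\}$ and each term of $E$ reduces to an explicit finite trigonometric sum whose $H^\sigma$ norm is read off from the coefficients of its nonzero harmonics.

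The key cancellation is that $\partial_t U = \omega n^{-s}\sin(nx-\omega t)$ is exactly the negative of $(\omega/n)\partial_x U$, so
\[
\partial_t U + U\partial_x U = n^{-s}\cos(nx-\omega t)\cdot\partial_x U = -\tfrac12 n^{1-2s}\sin(2(nx-\omega t)),
\]
contributing $O(n^{\sigma+1-2s})$ to $\|E\|_{H^\sigma}$. For each remaining term I record the explicit formulas $\lambda U = \omega/n + n^{-s}(1+n^2)^{1/2}\cos(nx-\omega t)$ and $\lambda U_x = -n^{1-s}(1+n^2)^{1/2}\sin(nx-\omega t)$; expand each of $(U_{xx})^2$, $(U_x)^2$, $(\lambda U)^2$, $(\lambda U_x)^2$ via $\cos^2 = (1+\cos(2\cdot))/2$ and $\sin^2 = (1-\cos(2\cdot))/2$; apply $\partial_x$ (which annihilates the constant part); and evaluate $\lambda^{-a}$ by multiplying each harmonic's coefficient by $(1+(\text{mode})^2)^{-a/2}$. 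Sample computation: the mode $\pm 2n$ coefficient of $\tfrac12\lambda^{-2}\partial_x(\lambda U_x)^2$ is $n^{3-2s}(1+n^2)/(2(1+4n^2)) \sim n^{3-2s}/8$, giving $H^\sigma$ norm $\sim n^{\sigma+3-2s}$; all other mode $\pm 2n$ contributions turn out to be of the same order or smaller.

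For the mode $\pm n$ contribution from $\lambda^{-2}\partial_x[(\lambda U)^2]$ (the only nonnegligible $\pm n$ piece surviving the cancellation above) I bypass the sharp Fourier computation and apply a Sobolev algebra estimate:
\[
\|\lambda^{-2}\partial_x(\lambda U)^2\|_{H^\sigma} \le C\|(\lambda U)^2\|_{H^{\sigma-1}} \le C\|\lambda U\|_{H^{\sigma-1}}^2 = C\|U\|_{H^\sigma}^2 \sim n^{2(\sigma-s)},
\]
valid since $\sigma-1 > 1/2$; this produces the $n^{2\sigma-2s}$ entry in the bound. The principal obstacle is bookkeeping: one must track the multipliers $(1+n^2)^{\pm a/2}$ and $(1+4n^2)^{\pm a/2}$ precisely enough to witness the exact cancellation of the mode $\pm n$ coefficient between $\partial_t U$ and the principal part of $U\partial_x U$; once that is verified, each remaining piece of $E$ is a concrete trigonometric polynomial whose $H^\sigma$ norm is a matter of multiplication.
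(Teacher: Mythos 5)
Your overall strategy---using that $U=u^{ap}_{\omega,n}$ has only the modes $0,\pm n$, so that $E$ is an explicit trigonometric polynomial supported on modes $\pm n,\pm 2n$, together with the exact cancellation $\partial_t U+U\partial_x U=-\tfrac12 n^{1-2s}\sin(2\theta)$, $\theta=nx-\omega t$---is sound, and is essentially a sharpened version of the paper's term-by-term triangle-inequality estimate. The genuine gap sits exactly where you write ``all other mode $\pm2n$ contributions turn out to be of the same order or smaller.'' Under the literal reading of the operators that you yourself adopt (you take $\lambda U_x=-n^{1-s}(1+n^2)^{1/2}\sin\theta$, i.e.\ $\lambda=(1-\partial_x^2)^{1/2}$), the term $\tfrac12\lambda^{-1}\partial_x(\partial_x^2U)^2$ dominates everything: $(\partial_x^2U)^2=\tfrac12 n^{4-2s}(1+\cos2\theta)$, and $\lambda^{-1}\partial_x$ is an order-zero operator whose multiplier at frequency $2n$ is $2n(1+4n^2)^{-1/2}\approx 1$, so this term has mode-$\pm2n$ coefficient of size $n^{4-2s}$ --- a full factor of $n$ larger than the $n^{3-2s}$ you extract from $\tfrac12\lambda^{-2}\partial_x(\lambda U_x)^2$ --- which would force $\|E\|_{H^\sigma}\sim n^{\sigma+4-2s}$ and violate the stated bound. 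So the proposal as written does not close, and cannot close with that interpretation of the operators.

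The resolution is that the ``convenient form'' uses the paper's recurring abuse of notation in which $\lambda$ stands for $1-\partial_x^2$. Deriving it from the nonlocal form \eqref{QR-nl} via the identity $((1-\partial_x^2)u)^2=(1-\partial_x^2)(u^2)+2u_x^2+u_{xx}^2$ gives $u_t+uu_x+\tfrac12(1-\partial_x^2)^{-1}\partial_x\bigl[(u_{xx})^2\bigr]=-(1-\partial_x^2)^{-1}\partial_x\bigl[(u_x)^2\bigr]-(1-\partial_x^2)^{-2}\partial_x\bigl[((1-\partial_x^2)u)^2+\tfrac12((1-\partial_x^2)u_x)^2\bigr]$, with only rational symbols (a literal $\lambda^{-1}\partial_x$ could never arise from \eqref{QR-nl}). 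With these operators the relevant multiplier at frequency $2n$ is $2n(1+4n^2)^{-1}\approx(2n)^{-1}$, the $(u_{xx})^2$ and $((1-\partial_x^2)u_x)^2$ terms each contribute mode-$2n$ coefficients of size $n^{3-2s}$, and your computational scheme then yields the claimed bound $\max\{n^{\sigma+3-2s},n^{2\sigma-2s}\}$ (your algebra-property treatment of the mode-$\pm n$ piece is fine). This is not a cosmetic point: the interpolation later in the section needs the exponent $\sigma+3-2s$; with $\sigma+4-2s$ the resulting decay rate would be negative only for $s>4$. So you must redo the bookkeeping with the correct smoothing operators and actually verify the $(u_{xx})^2$ term rather than assert it is of the same order or smaller.
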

\begin{proof}
We take the $H^\sigma$ norm of $E$, apply the triangle inequality, and use the following lemma which bounds the derivatives of $u^{ap}_{\omega, n}$. 
\begin{lemma}
For $s, \sigma>1$ and for all $k, j \in \mathbb N_0$, we have for a constant $C$ independent of $n$
$$
\|    \partial_t ^k \partial_x^j u^{ap}_{\omega, n}  \|_{H^\sigma} \le C  n^{s-\sigma - j} , \quad \|    \cos ^2 (nx-\omega t)  \|_{H^\sigma} \le C  n^{ \sigma}. 
$$
\end{lemma}
This lemma allows us to find  the following  bounds. 
\begin{align}
 & 
\left \|  \frac12 \lambda^{-2} \partial_x ( \partial_x^2  u^{ap}_{\omega, n})^2     \right \| _{H^\sigma} \le 
C n^{\sigma+3 -2s}
\\
& 
\left \|   \lambda^{-2}  \partial_x \left [( \partial_x  u^{ap}_{\omega, n} )^2   \right ]     \right \| _{H^\sigma} \le 
C n^{\sigma +1  -2s}
\\ &  
\left  \|   \lambda^{-4} \partial_x \left [  ( \lambda ^2    u^{ap}_{\omega, n} )^2 +\frac12 (\lambda^2  \partial_x  u^{ap}_{\omega, n} )^2 
   \right ]    \right  \| _{H^\sigma} \le 
C n^{2 \sigma -2  -2 s}+ 
C n^{2 \sigma  -2 s}. 
\end{align}
Also, we estimate the first two terms by
\begin{align} 
\left \| 
\partial_t  u^{ap}_{\omega, n}  +   u^{ap}_{\omega, n} \partial_ x  u^{ap}_{\omega, n}   \right  \| _{H^\sigma} 
=n^{1-2s} \left\| \sin(nx-\omega t) \cos(nx-\omega t) \right\|_{H^\sigma} 
\le C n^{\sigma +1-2s}. 
\end{align}
Combining the above estimates gives the desired bound for $E$. 
\end{proof}

We will also consider, for each $n=1,2,\dots$, the solution to the RZQ equation with initial data
$$
u_{0,\omega, n}(x) = u^{ap}_{\omega, n} (x,0).
$$
 Since the initial data is uniformly bounded for each $n = 1,2,\dots$, we have  a minimum lifespan, denoted $T>0$, for which all the solutions, $u_{\omega, n} $, exist for $0<t<T$. Furthermore, for $0<t<T$, from the solution size estimate we have   $\| u_{\omega, n}  (t)  \| _{H^\sigma} \le C n^{\sigma -s} $ for $ \sigma \in \mathbb R$. 
 
 Our next result shows that the actual solutions with the above initial data are approximated by the approximate solutions. We will denote the difference and sums by 
 \begin{align}
 v =  u^{ap}_{\omega, n}  -  u _{\omega, n} , \quad \text{ and } \quad  w=  u^{ap}_{\omega, n}   +  u _{\omega, n}.
 \end{align}
A straightforward calculation shows that $v$ satisfies the following ivp
\begin{align}\notag
 & \partial_t v = E -  \frac12 \partial_x\left (vw  \right)  -
 \frac12 \lambda^{-2} \partial_x \left  [\partial_x^2 v  \partial_x^2 w   \right  ]
 - \lambda^{-2}  \partial_x \left [\partial_x   v \partial_x w   \right ]  
 \\ & \ \ \ \ \  -  \lambda^{-4} \partial_x \left [    \lambda ^2     v \lambda ^2     w +\frac12    \lambda ^2   \partial_x   v \lambda  ^2 \partial_x     w
   \right ]
\\ & v(x,0) = 0 .
\end{align}
 Our next task is to show that the approximate solutions approximate the exact solutions in $H^s$. Our strategy is to estimate the difference in $H^\sigma $, $\sigma < s-1$ using the previous lemma, and then estimate the difference in $H^{s+1}$ and then interpolate. 
\begin{lemma}\label{error-periodic}
The $H^\sigma$, $5/2<\sigma<s-1$, norm of $v$ can be estimated by
$$
\| v\|_{H^\sigma} \le C  \max\{ n^{\sigma+3-2s} , n^{2\sigma-2s}  \}.
$$
\end{lemma}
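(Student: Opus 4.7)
The plan is to derive an $H^\sigma$ energy inequality for $v$ and conclude via Gr\"onwall's inequality, exploiting that $v(x,0)=0$. Apply $\lambda^\sigma$ to the evolution equation for $v$, pair with $\lambda^\sigma v$ in $L^2$ and integrate to obtain
\begin{equation*}
\frac{1}{2}\frac{d}{dt}\|v\|_{H^\sigma}^2 = \int \lambda^\sigma E\cdot\lambda^\sigma v\,dx - \sum_j I_j,
\end{equation*}
where the $I_j$ collect the five nonlinear contributions from the right-hand side of the PDE for $v$. The forcing integral is controlled by $\|E\|_{H^\sigma}\|v\|_{H^\sigma}$ via Cauchy--Schwarz.

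The transport-like term $-\tfrac12\p_x(vw)$ is handled exactly as in the uniqueness argument of Section~2: commute $\lambda^\sigma$ with $w$ using Lemma~\ref{KP}, bound the commutator piece by $C\|w\|_{H^s}\|v\|_{H^\sigma}^2$, and integrate the remaining symmetric piece by parts to produce $-\tfrac12\int\p_x w\,(\lambda^\sigma v)^2\,dx\lesssim \|\p_x w\|_{L^\infty}\|v\|_{H^\sigma}^2$. Since both $u^{ap}_{\omega,n}$ and $u_{\omega,n}$ are uniformly bounded in $H^s$ by the solution size estimate \eqref{soln-bd} and $s>7/2$ allows Sobolev embedding into $C^1$, the total contribution is $\lesssim \|v\|_{H^\sigma}^2$. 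The nonlocal pieces $\lambda^{-1}\p_x[\p_x v\,\p_x w]$, $\lambda^{-1}\p_x[\p_x^2 v\,\p_x^2 w]$, and $\lambda^{-2}\p_x[\lambda v\,\lambda w+\tfrac12\lambda\p_x v\,\lambda\p_x w]$ are treated by combining the smoothing of $\lambda^{-1}\p_x$ (zeroth order) and $\lambda^{-2}\p_x$ ($-1$st order) with Lemma~\ref{HH}, distributing derivatives so that the higher Sobolev norm always falls on $w$, which is uniformly controlled by \eqref{soln-bd}. The condition $\sigma<s-1$ is precisely what keeps the required norm on $w$ at or below $H^s$.

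Collecting all estimates yields the differential inequality
\begin{equation*}
\frac{d}{dt}\|v\|_{H^\sigma}^2 \le C\|v\|_{H^\sigma}^2 + C\|E\|_{H^\sigma}\|v\|_{H^\sigma}.
\end{equation*}
Since $v(0)=0$, Gr\"onwall's inequality (or direct integration) then gives $\|v(t)\|_{H^\sigma}\lesssim T\,e^{CT}\|E\|_{H^\sigma}$ on $[0,T]$, and substituting the previous lemma's bound on $\|E\|_{H^\sigma}$ delivers the stated inequality.

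The principal obstacle is the high-derivative term $\lambda^{-1}\p_x[\p_x^2 v\,\p_x^2 w]$: because $\lambda^{-1}\p_x$ is only zeroth-order, it does not directly offset the two derivatives falling on $v$. The resolution is to integrate by parts once more inside the $L^2$ pairing to shift one derivative onto $\lambda^\sigma v$, and then apply Lemma~\ref{HH} with $r=\sigma$ to estimate $\|\p_x^2 v\,\p_x^2 w\|_{H^{\sigma-1}}\lesssim \|w\|_{H^{\sigma+2}}\|v\|_{H^{\sigma+1}}$; the high-order norms on $v$ and $w$ are themselves controlled by the solution size estimate in higher Sobolev spaces, which is available via persistence of regularity since the chosen initial data are smooth, so this contribution can be absorbed into the right-hand side of the differential inequality.
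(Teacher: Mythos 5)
Your overall strategy coincides with the paper's: an $H^\sigma$ energy estimate for $v$, Cauchy--Schwarz on the forcing term $E$, a commutator plus integration by parts for the transport term (the paper uses Lemma~\ref{ccm} where you invoke Lemma~\ref{KP}, an immaterial difference), Lemma~\ref{HH} for the nonlocal terms, and Gr\"onwall with $v(0)=0$ combined with the error bound. The gap is in your resolution of the term you yourself flag as the principal obstacle, namely the contribution $\int \lambda^{\sigma-2}\p_x\bigl(\p_x^2 v\,\p_x^2 w\bigr)\lambda^\sigma v\,dx$ in \eqref{approx_sln}. Your bound $\|\p_x^2 v\,\p_x^2 w\|_{H^{\sigma-1}}\lesssim \|w\|_{H^{\sigma+2}}\|v\|_{H^{\sigma+1}}$ cannot be ``absorbed'': $\|v\|_{H^{\sigma+1}}$ is not controlled by $\|v\|_{H^\sigma}$, and persistence of regularity does not make the high norms harmless, because for $u^{ap}_{\omega,n}=\omega n^{-1}+n^{-s}\cos(nx-\omega t)$ one has $\|w\|_{H^{\sigma+2}}\approx n^{\sigma+2-s}$ (which grows in $n$ once $\sigma>s-2$) and at best $\|v\|_{H^{\sigma+1}}\lesssim n^{\sigma+1-s}+n^{-1}$ a priori. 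Treated as a forcing term, this contributes a coefficient of size roughly $\max\{n^{2\sigma+3-2s},\,n^{\sigma+1-s}\}$, which strictly exceeds the target $\max\{n^{\sigma+3-2s},n^{2\sigma-2s}\}$; already the $n^{\sigma+1-s}$ piece, after the interpolation step that follows the lemma, yields $\|v\|_{H^s}\lesssim n^{1/(s+1-\sigma)}$, which does not decay, so the nonuniform-dependence argument would collapse. Treated instead as part of the Gr\"onwall coefficient, it produces a constant blowing up with $n$. Either way the claimed absorption fails.

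The repair is to treat this term exactly as the paper treats the analogous term in the uniqueness proof, i.e.\ by commutator rather than by a direct product estimate: write
\begin{equation*}
\lambda^{\sigma-2}\p_x\bigl(\p_x^2 w\,\p_x^2 v\bigr)=\bigl[\lambda^{\sigma-2}\p_x,\p_x^2 w\bigr]\p_x^2 v+\p_x^2 w\,\lambda^{\sigma-2}\p_x^3 v,
\end{equation*}
estimate the commutator by Lemma~\ref{ccm} with $\rho=\sigma-2$ and $r=\sigma-1>3/2$, which costs only $\|w\|_{H^{\sigma+1}}\|v\|_{H^\sigma}$, and for the second piece use $\lambda^{\sigma-2}\p_x^3=-\lambda^{\sigma}\p_x+\lambda^{\sigma-2}\p_x$ and integrate by parts in the pairing with $\lambda^\sigma v$, which costs $\|\p_x^3 w\|_{L^\infty}\|v\|_{H^\sigma}^2\lesssim\|w\|_{H^{s}}\|v\|_{H^\sigma}^2$ (here $s>7/2$ is used). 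Since $\sigma+1<s$, all the norms of $w$ appearing are uniformly bounded in $n$ by the solution size estimate \eqref{soln-bd}, and the term contributes $C\|v\|_{H^\sigma}^2$, as the paper asserts; with that correction your differential inequality and Gr\"onwall step go through and give the stated bound.
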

\begin{proof}
We calculate the $H^\sigma$ energy of $v$ and we have 
\begin{align}
\frac12 \frac{d}{dt} \| v(t) \| _{H^\sigma} = &  \int \lambda^\sigma E \lambda^\sigma v dx - \frac12 \int \lambda^\sigma \partial_x (vw)  \lambda^\sigma v dx 
 - \frac12   \int \lambda^{\sigma-2} \partial_x ( \partial_x^2 v\partial_x^2 w)  \lambda^\sigma v dx  
\label{approx_sln}
\\
& -   \int \lambda^{\sigma-2} \partial_x ( \partial_x v\partial_x w)  \lambda^\sigma v dx  
-   \int \lambda^{\sigma-4} \partial_x (  \lambda^2  v\lambda^2 w   + \frac12 \lambda^2  \partial_x v\lambda^2  \partial_xw)   \lambda^\sigma v dx  .
 \notag
\end{align}
The first term is handled by the Cauchy-Schwarz inequality and our estimate on the error
$$
 \int \lambda^\sigma E \lambda^\sigma v dx \le  \| E \|_{H^\sigma }  \| v\|_{H^\sigma } \le    C  \max\{ n^{\sigma+3-2s} , n^{2\sigma-2s}  \}  \| v\|_{H^\sigma } .
$$ 
The second term is written as a commutator as follows
\begin{align}
  \int \lambda^\sigma \partial_x (vw)  \lambda^\sigma v dx 
  =   \int  \left( \lambda^\sigma \partial_x (vw)  -  w  \lambda^\sigma \partial_x   v\right)  \lambda^\sigma v dx  + 
  \int  w  \left( \lambda^\sigma \partial_x   v \right)  \lambda^\sigma v dx  .
\end{align}
We apply Lemma \ref{ccm} to obtain 
\begin{align}
  \int \lambda^\sigma \partial_x (vw)  \lambda^\sigma v dx 
   \le \| w\|_{H^s} \| v\|_{H^\sigma} ^2+ 
  \int  w  \left( \lambda^\sigma \partial_x   v \right)  \lambda^\sigma v dx  .
\end{align}
The remaining integral above is bounded using integration by parts followed by H\"older's inequality and the Sobolev embedding theorem
\begin{align}
  \int \lambda^\sigma \partial_x (vw)  \lambda^\sigma v dx 
   \le C \| v\|_{H^\sigma} ^2    .
\end{align}
Similarly, the remaining terms in equation \eqref{approx_sln} are bounded by $C \|v\|_{H^\sigma}^2$, the proofs are done in a similar way to the terms in the proof of uniqueness in the previous section, and we omit the details for the sake of brevity. 
Combining these estimates we obtain the following differential inequality for $ \|v\|_{H^\sigma}^2$
\begin{align}
\frac{d}{dt }  \|v\|_{H^\sigma}^2 \le C\left( \max\{ n^{\sigma+3-2s} , n^{2\sigma-2s}  \}  \| v\|_{H^\sigma } +  \| v\|_{H^\sigma }  ^2\right) .
\end{align}
Solving this yields
$$
 \|v(t) \|_{H^\sigma} \le e^{CT}  \max\{ n^{\sigma+3-2s} , n^{2\sigma-2s} \},
$$
which completes the proof of the lemma. 
\end{proof}

We now interpolate between $s_1 = \sigma$ and $s_2 = s+1$ to obtain 
$$
\| v\| _{H^s}
\le \|v\| _{H^\sigma} ^{\frac1{s+1-\sigma}}\|v\| _{H^{s+1}} ^{\frac{s-\sigma}{s+1-\sigma}}
\lesssim 
\left( \max\{ n^{\sigma+3-2s} , n^{2\sigma-2s} \right) ^{\frac1{s+1-\sigma}} n^{\frac{s-\sigma}{s+1-\sigma}} \lesssim n^{-\epsilon},
$$
where we used the interpolation inequality
 $$
 \|f\|_{H^s} \le \|f\|_{H^{s_1}} ^{(s_2-s)/(s_2-s_1) }  \|f\|_{H^{s_2}}^{(s-s_1)/(s_2-s_1) } ,
 $$
 and
where the exponent $\epsilon > 0$ is given by
$$
\epsilon = \min\left\{ \frac{s - \sigma}{s+1-\sigma} , \frac{s-3}{s+1-\sigma}\right\}.
$$
With this decay estimate, we are ready to prove the main theorem of this section.

We take the two sequences of solutions, $\{ u_{\omega, n}\}$ with uniform minimum lifespan $T>0$. We have that the initial data satisfies 
$$
\| u_{0,1, n}- u_{0,-1, n}\|_{H^s} \rightarrow 0, \text{ as }  n\rightarrow \infty. 
$$
At any later time, we have 
\begin{align}
\| u_{1,n} - u_{-1,n} \| _{H^s} 
 \ge &  \| u_{1,n}^{ap} - u_{-1,n}^{ap}  \| _{H^s} - \| u_{1,n}^{ap} - u_{1,n}  \| _{H^s} -  \| u_{-1,n}^{ap} - u_{-1,n}  \| _{H^s}
\\
& \ge \|  n^{-1} - n^{-s} \cos(nx)\sin(t)  \| _{H^s}  - n^{-\epsilon} \gtrsim |\sin(t)| - n^{-1} -n^{-\epsilon} . 
\end{align}
Applying $\liminf_{n\rightarrow \infty}$ on both sides yields 
\begin{align}
\liminf_{n\rightarrow\infty} \| u_{1,n} - u_{-1,n} \| _{H^s}  \gtrsim |\sin(t)| .
\end{align}
Hence, we have proved that there exists two sequences of solutions which remain bounded away from each other for all positive times, however, their initial data converge to each other; the data-to-solution map is not uniformly continuous.

\subsection{Nonuniform dependence on $\mathbb R$} The main differences in this subsection compared to the periodic case are: the construction of the approximate solutions and the estimates of the error. We provide details of these arguments, and refer the reader to the proof in the periodic case when the details are very similar.

We shall construct low-high frequency approximate solutions by first constructing low frequency {\em exact} solutions. We let $u_{\omega, n,  \ell}$ be the solution the the RZQ equation with the low frequency initial data $u_{0, \omega, n, \ell}  (x) = \omega n^{-1}    \varphi  (n^{-\delta }x) $ where $\varphi $ is a smooth bump function equal to $1$ when $|x|\le 2$. In other words, $u_{\omega, n,  \ell}$ is the solution to 
\begin{align}
& \partial_t u_{\omega, n,  \ell} + u_{\omega, n,  \ell} \partial_x u_{\omega, n,  \ell}  + \frac12 \lambda^{-2} \partial_x ( \partial_x ^2 u_{\omega, n,  \ell})^2 
  =   F(u_{\omega, n,  \ell}) 
   \\
    & u_{  \omega, n, \ell}  (x, 0)  = \omega n^{-1}   \varphi (n^{-\delta }x) .
\end{align}
where $\displaystyle F(u_{\omega, n,  \ell})= - \lambda^{-2}  \partial_x \left [(\partial_x u_{\omega, n,  \ell})^2   \right ]  
  - \lambda^{-4} \partial_x \left [  ( \lambda^2    u_{\omega, n,  \ell})^2 +\frac12 (\lambda^2  \partial_ x u_{\omega, n,  \ell})^2 
   \right ]  $. We will denote $u_\ell = u_{\omega, n,  \ell}$ whenever it is clear that the estimates we are computing are independent of $\omega$ and $n$. 
  From the well-posedness result, we have that for all $0<\delta$, the unique solution $u_{\omega, n,  \ell} \in C([0, 1]; H^s) $, $s>7/2$, satisfies, for all $\sigma\ge 0$ 
\begin{align}\label{l-est}
  \| u_{\omega, n,  \ell}(t) \|_{H^\sigma} \le c_\sigma n^{\frac{\delta}{2} - 1} . 
\end{align}

We then define the high frequency sequence of functions by
$$
u_{ h} ^{ap} = u_{\omega, n, h} ^{ap} (x,t) = n^{-\delta/2-s} \widetilde \varphi \left( \frac{x}{n^\delta} \right) \cos(nx-\omega t), 
$$
where $\widetilde \varphi (x)$ is a smooth bump function with support $|x|<  2$ and equal to $1$ when $|x|<1$, so that $\widetilde \varphi (x) \varphi (x)  =\widetilde \varphi (x) $. 
The high-frequency approximate solutions can be estimated in $H^\sigma$, for any $\sigma\ge 0$ using the following lemma found in \cite{himonas2009euler}. 
\begin{lemma}\label{hk2009}
Let $\varphi \in \mathcal S(\mathbb R)$, $\delta > 0$ and $\alpha \in \mathbb R$. Then for any $\sigma \ge 0 $ we have that 
$$
\lim_{n\rightarrow \infty} n^{-\sigma-\delta/2} \left \| \varphi \left( n^{-\delta} x\right) \cos(nx-\alpha) \right\| _{H^\sigma} = \frac{1}{\sqrt{2}} \| \varphi\|_{L^2} .
$$
The above relation also is true if $\cos$ is replaced by $\sin$.
\end{lemma}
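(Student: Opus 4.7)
The plan is to pass through Plancherel's theorem and reduce the $H^\sigma$ norm to an integral against the Fourier transform of $\varphi(n^{-\delta}x)\cos(nx-\alpha)$, which localizes near frequencies $\xi = \pm n$. First, I would write $\cos(nx-\alpha) = \tfrac12(e^{i\alpha}e^{-inx} + e^{-i\alpha}e^{inx})$ so that, using the scaling $\mathcal{F}[\varphi(n^{-\delta}\cdot)](\xi) = n^\delta \widehat\varphi(n^\delta \xi)$ together with the modulation rule, the Fourier transform of $f_n(x)\doteq\varphi(n^{-\delta}x)\cos(nx-\alpha)$ becomes
\begin{equation*}
\widehat{f_n}(\xi) = \tfrac{n^\delta}{2}\bigl(e^{-i\alpha}\widehat\varphi(n^\delta(\xi-n)) + e^{i\alpha}\widehat\varphi(n^\delta(\xi+n))\bigr).
\end{equation*}

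Next, I would compute
\begin{equation*}
\|f_n\|_{H^\sigma}^2 = \int_{\mathbb{R}}(1+\xi^2)^\sigma |\widehat{f_n}(\xi)|^2\, d\xi
\end{equation*}
and split the squared modulus into two diagonal terms and a cross term. The diagonal terms are handled by the substitution $\eta = n^\delta(\xi \mp n)$, which gives
\begin{equation*}
\tfrac{n^{2\delta}}{4}\int (1+\xi^2)^\sigma |\widehat\varphi(n^\delta(\xi\mp n))|^2\, d\xi = \tfrac{n^{\delta+2\sigma}}{4}\int \Bigl(\tfrac{1+(n^{-\delta}\eta\pm n)^2}{n^2}\Bigr)^\sigma |\widehat\varphi(\eta)|^2\, d\eta.
\end{equation*}
Since $\widehat\varphi\in\mathcal{S}$, the factor $\bigl((1+(n^{-\delta}\eta\pm n)^2)/n^2\bigr)^\sigma$ converges pointwise to $1$ and is bounded by an integrable majorant (for instance $C(1+|\eta|)^{2\sigma}|\widehat\varphi(\eta)|^2$), so dominated convergence yields
\begin{equation*}
\lim_{n\to\infty}n^{-\delta-2\sigma}\cdot\tfrac{n^{2\delta}}{4}\int(1+\xi^2)^\sigma|\widehat\varphi(n^\delta(\xi\mp n))|^2\, d\xi = \tfrac{1}{4}\|\widehat\varphi\|_{L^2}^2 = \tfrac{1}{4}\|\varphi\|_{L^2}^2
\end{equation*}
by Plancherel. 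The two diagonal contributions add to $\tfrac12\|\varphi\|_{L^2}^2$.

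The only step that requires real care is the cross term
\begin{equation*}
\tfrac{n^{2\delta}}{2}\mathrm{Re}\Bigl(e^{2i\alpha}\int(1+\xi^2)^\sigma \overline{\widehat\varphi(n^\delta(\xi-n))}\widehat\varphi(n^\delta(\xi+n))\, d\xi\Bigr),
\end{equation*}
which must vanish faster than $n^{\delta+2\sigma}$. Because $\widehat\varphi$ is Schwartz, on the relevant region at least one of the two shifted arguments has size comparable to $n^{1+\delta}$, so the product decays faster than any polynomial in $n$; changing variables and estimating crudely with $|\widehat\varphi(n^\delta(\xi-n))|\lesssim (1+n^{\delta}|\xi-n|)^{-N}$ for $N$ arbitrarily large confirms that this contribution is $O(n^{-M})$ for any $M$. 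Combining the diagonal limit with the negligibility of the cross term, multiplying by $n^{-\sigma-\delta/2}$ and taking square roots yields the claimed limit. The $\sin$ case is identical with $e^{\pm i\alpha}$ replaced by $\mp i e^{\pm i\alpha}$, which does not affect the modulus in the diagonal terms.
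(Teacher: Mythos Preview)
The paper does not supply its own proof of this lemma; it is simply quoted as a known result from \cite{himonas2009euler}. Your Fourier-analytic argument---Plancherel, localization of $\widehat{f_n}$ near $\xi=\pm n$, dominated convergence on the two diagonal terms, and rapid decay of the cross term from the Schwartz decay of $\widehat\varphi$---is correct and is essentially the standard proof one finds in that reference. One cosmetic point: the step $\|\widehat\varphi\|_{L^2}^2=\|\varphi\|_{L^2}^2$ presupposes the unitary Fourier convention, whereas the paper (cf.\ Section~4) uses $\widehat f(\xi)=\int e^{-ix\xi}f(x)\,dx$; the resulting $2\pi$ is absorbed into the definition of the $H^\sigma$ norm, so the stated limit is unaffected, but you may want to make the normalization explicit.
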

So we see that the above high frequency approximate solutions are bounded in $H^s$. We now 
  define the sequence of low-high frequency approximate solutions by 
$$
u_{\omega, n} ^{ap} (x,t) =   u_{\omega, n,  \ell}  + u_{\omega, n, h} ^{ap}  = u_\ell+u_h^{ap} , 
$$
and we define the error by substituting this function into the RZQ equation. 
\begin{align}
E = \ &  \notag
\partial_t  u_h^{ap} + u_\ell \partial_x u_h^{ap}  +  u_h^{ap} \partial_x  u_h^{ap} + u_h^{ap} \partial_x u_\ell + 
 \frac12 \lambda^{-2} \partial_x ( \partial_x ^2 u_h^{ap} )^2 +  \lambda^{-2} \partial_x ( \partial_x ^2 u_h^{ap} \partial_x ^2 u_\ell ) 
\\ &   \notag
+ \lambda^{-2}  \partial_x \left [(\partial_x u_h ^{ap}  )^2   \right ]  + 2 \lambda^{-2}  \partial_x \left [(\partial_x u_h ^{ap} \partial_x u_\ell )    \right ]  
 + \lambda^{-4} \partial_x \left [  ( \lambda^2    u_h ^{ap}  )^2    \right ] 
 + 2 \lambda^{-4} \partial_x \left [  \lambda ^2   u_h ^{ap}   \lambda    u_\ell   \right ]  
\\ &
  +  \lambda^{-4} \partial_x  \left [  \frac12 (\lambda ^2 \partial_ x u_h^{ap}  )^2 
   \right ]  
     +  \lambda^{-4} \partial_x  \left [    \lambda ^2\partial_ x u_h^{ap}  \lambda ^2 \partial_ x u_\ell     \right ]  ,
\end{align}
where we used $u_\ell$ was an exact solution to the RZQ equation. 
We observe that the first two terms sum to 
\begin{align}\notag
\partial_t  u_h^{ap} + u_\ell \partial_x u_h^{ap} = &\ (u_\ell(x,0) - u_\ell(x,t) ) n^{-s-\delta/2 +1 } \widetilde \varphi(n^{-\delta } x) \sin(nx-\omega t) 
\\
& + u_\ell(x,t) n^{-s-3\delta/2}  \partial_x \widetilde \varphi (n^{-\delta }x) \cos(nx-\omega t) . 
\end{align}
We then decompose the error, $E$, into the following terms  
\begin{align*}
 &E_1 = (u_\ell(x,0) - u_\ell(x,t) ) n^{-s-\delta/2 +1 } \widetilde \varphi(n^{-\delta } x) \sin(nx-\omega t)   ,
\\
  &E_2 = u_\ell(x,t) n^{-s-3\delta/2}  \partial_x \widetilde \varphi (n^{-\delta }x) \cos(nx-\omega t)  ,
 \quad \quad E_3 =  u_h^{ap} \partial_x  u_h^{ap}   ,
  \\
 & 
 E_4 = u_h^{ap} \partial_x u_\ell  ,
\quad \quad 
E_5 =  \frac12 \lambda^{-2} \partial_x ( \partial_x ^2 u_h^{ap} )^2 ,
 \quad \quad  
 E_6 =  \lambda^{-2} \partial_x ( \partial_x ^2 u_h^{ap} \partial_x ^2 u_\ell ) ,
\\ &
E_7 = \lambda^{-2}  \partial_x \left [(\partial_x u_h ^{ap}  )^2   \right ] ,
\quad \quad  
E_8 = 2 \lambda^{-2}  \partial_x \left [(\partial_x u_h ^{ap} \partial_x u_\ell )    \right ]  ,
\quad \quad   E_9 =   \lambda^{-4} \partial_x \left [  ( \lambda ^2   u_h ^{ap}  )^2    \right ] ,
\\ & 
E_{10}=  2 \lambda^{-4} \partial_x \left [  \lambda  ^2  u_h ^{ap}   \lambda    u_\ell   \right ]  ,
\quad \quad E_{11} = \lambda^{-4} \partial_x  \left [  \frac12  \left (\lambda^2  \partial_ x u_h^{ap}   \right)^2  \right ]  ,
\quad \quad E_{12} = \lambda^{-4} \partial_x  \left [    \lambda ^2 \partial_ x u_h^{ap}  \lambda ^2\partial_ x u_\ell     \right ]  ,
\end{align*}
 so that $$
 E = E_1+E_2+\dots + E_{12} . 
 $$
 
 \subsubsection{Estimating the error.} For $s>7/2$, we estimate the error in $H^\sigma$ for some $s-2+\delta <\sigma < s-1$ as follows. 
 
 {\em Estimating $E_1$.} Using Lemma \ref{hk2009} we have 
 \begin{align}
 \| E_1 \|_{H^\sigma} \lesssim  n^{1+\sigma - s} \| u_\ell(x,0) - u_\ell(x,t)\|_{H^\sigma} .
 \end{align}
 using the fundamental theorem of calculus we obtain 
 \begin{align}
 \| E_1 \|_{H^\sigma} \lesssim  n^{1+\sigma - s} \int_0^t \| \partial_\tau u_\ell(x,\tau)  \|_{H^\sigma} d\tau.
 \end{align}
 Using the RZQ equation, the algebra property for Sobolev spaces, and estimate \eqref{l-est}, we have 
 \begin{align}
  \| \partial_\tau u_\ell(x,\tau)  \|_{H^\sigma} \le & \
\left \| u_{ \ell} \partial_x u_{  \ell}\right\|_{H^\sigma}  + \frac12\left \| \lambda^{-2} \partial_x ( \partial_x ^2 u_{  \ell})^2 \right\|_{H^\sigma}
+\left \|  F(u_{\ell}) \right\|_{H^\sigma}
 \lesssim  \left  \| u_{ \ell}\right\|_{H^{\sigma+1}} ^2 \lesssim n^{\delta-2} .
 \end{align}
 Hence we have the estimate 
  \begin{align}
 \| E_1 \|_{H^\sigma} \lesssim  n^{\delta+\sigma - s-1}  .
 \end{align}

 {\em Estimating $E_2$.}
 We have from the algebra property for Sobolev spaces 
   \begin{align}
 \| E_2 \|_{H^\sigma} \le  n^{-s-3\delta/2}  \| u_\ell(x,t) \|_{H^\sigma}  \|    \partial_x \widetilde \varphi (n^{-\delta }x) \cos(nx-\omega t)\|_{H^\sigma}   .
 \end{align}
 Applying estimate \eqref{l-est} and Lemma \ref{hk2009} we have
   \begin{align}
 \| E_2 \|_{H^\sigma} \lesssim  n^{-s-3\delta/2}   \cdot n^{ \delta/2-1} \cdot n^{\sigma +\delta/2}    = n^{\delta/2 + \sigma-s-1} .
 \end{align}

 {\em Estimating $E_3$.}
Since 
   \begin{align}
 \| E_3 \|_{H^\sigma}  
 =  \| u_h^{ap} \partial_x  u_h^{ap}  \|_{H^\sigma} 
 \lesssim 
  \| u_h^{ap}\|_{H^\sigma}  \|  \partial_x  u_h^{ap}  \|_{L^\infty}  +   \| \partial_x u_h^{ap}\|_{H^\sigma}  \|     u_h^{ap}  \|_{L^\infty}  
  .
 \end{align}
 Using $  \| u_h^{ap}\|_{H^\sigma} \lesssim n^{\sigma -s}$ and $  \|     u_h^{ap}  \|_{L^\infty}  \lesssim n^{-\delta/2-s}$ we have 
   \begin{align}
 \| E_3 \|_{H^\sigma}  
\lesssim  n^{\sigma -s} \cdot n^{1 -\delta/2-s} + n^{\sigma+1 -s} \cdot n^{-\delta/2-s}  = n^{\sigma+1 -2s-\delta/2},  \quad n\gg 1 
  .
 \end{align}

 {\em Estimating $E_4$.}
 We have 
    \begin{align*}
 \| E_4 \|_{H^\sigma}  
 = \ &  \|  u_h^{ap} \partial_x u_\ell    \|_{H^\sigma} 
 \lesssim 
  \| u_h^{ap}\|_{H^\sigma}  \|  \partial_x u_\ell    \|_{H^\sigma}   
  \lesssim n^{\sigma -s } \cdot n^{\delta/2-1} 
  ,
 \end{align*}
 which gives 
     \begin{align}
 \| E_4 \|_{H^\sigma}  
 \lesssim\ &  n^{\sigma +\delta/2  -s-1  } 
  , \quad n\gg 1 . 
 \end{align}

 {\em Estimating $E_5$.}
 We have 
    \begin{align*}
 \| E_5 \|_{H^\sigma}  
 = \ &  \|   \frac12 \lambda^{-2} \partial_x ( \partial_x ^2 u_h^{ap} )^2     \|_{H^\sigma} 
   \approx 
   \|    ( \partial_x ^2 u_h^{ap} )^2     \|_{H^{\sigma-1} } .
 \end{align*}
 A straightforward calculation shows
    \begin{align*}
  \partial_x ^2 u_h^{ap} = \ & -n^{2-s-\delta/2} \widetilde \varphi (n^{-\delta} x) \cos(nx-\omega t) -2 n^{1-s-3\delta/2}  \widetilde \varphi '(n^{-\delta} x) \sin(nx-\omega t)
  \\
  & +n^{-s-5\delta/2}\widetilde \varphi ''(n^{-\delta} x)\cos(nx-\omega t)
  .
 \end{align*}
Since $\sigma -1 >  1/2$, then $E_5$ can be bounded by 
     \begin{align*}
 \| E_5 \|_{H^\sigma}   \lesssim 
   \|    \partial_x ^2 u_h^{ap}       \|_{ L^\infty  }   \|    \partial_x ^2 u_h^{ap}       \|_{  H^{\sigma-1} }   \lesssim n^{2-s-\delta/2} \cdot n^{\sigma + 1-s }   = n^{\sigma + 3-2s-\delta/2}  \quad n \gg 1 .
 \end{align*}

 {\em Estimating $E_6$.}
  We have 
    \begin{align*}
 \| E_6 \|_{H^\sigma}  
 = \ &  \|   \lambda^{-2} \partial_x ( \partial_x ^2 u_h^{ap} \partial_x ^2 u_\ell )     \|_{H^\sigma} 
 \\ & \lesssim
   \|   \partial_x ^2 u_h^{ap}     \|_{H^{\sigma-1} }    \|    \partial_x ^2 u_\ell     \|_{ L^\infty}  +   \|   \partial_x ^2 u_\ell        \|_{H^{\sigma-1} }    \|    \partial_x ^2 u_h^{ap}     \|_{ L^\infty}  
    \\ & \lesssim
 n^{ \sigma+ 1 -s}  \cdot   n^{-2\delta -1} +   n^{-3\delta/2-1} \cdot n^{2-s-\delta/2} .
 \end{align*}
 From which we obtain 
     \begin{align*}
 \| E_6 \|_{H^\sigma}   \lesssim n^{\sigma-2\delta -s }  \quad n \gg 1  .
  \end{align*}
 
  {\em Estimating $E_7$.}
  We have 
    \begin{align*}
 \| E_7 \|_{H^\sigma}  
 = \ &  \|   \lambda^{-2}  \partial_x \left [(\partial_x u_h ^{ap}  )^2   \right ]     \|_{H^\sigma} 
 \\ & \lesssim
   \|   \partial_x   u_h^{ap}     \|_{H^{\sigma-1} }    \|    \partial_x    u_h^{ap}    \|_{ L^\infty} 
 \\ & \lesssim
 n^{ \sigma - s }  \cdot   n^{ 1-\delta/2-s} 
 \\ & 
   \approx  n^{ \sigma+1-\delta/2 - 2s } , \quad \text{ for } n \gg 1 .
 \end{align*}

  {\em Estimating $E_8$.}
  We have 
    \begin{align*}
 \| E_8 \|_{H^\sigma}  
 = \ &  \|   \lambda^{-1}  \partial_x \left [ \partial_x u_h ^{ap}   \partial_x u_\ell  \right ]     \|_{H^\sigma} 
 \\ & \lesssim
   \|   \partial_x   u_h^{ap}     \|_{H^{\sigma-1} }    \|    \partial_x    u_\ell    \|_{ L^\infty}  +    \|   \partial_x   u_\ell    \|_{H^{\sigma-1} }    \|    \partial_x    u_h^{ap}   \|_{ L^\infty}  
 \\ & \lesssim
 n^{ \sigma - s }  \cdot   n^{ \delta/2-1}  +  n^{  \delta/2-1 }  \cdot   n^{  1-\delta/2-s}
 \\ & 
   \approx  n^{ \sigma-1+\delta/2 -  s } , \quad \text{ for } n\gg 1 ,
 \end{align*}
 where we retained  the smallest power of $n$.

   {\em Estimating $E_9$.}
  We have 
    \begin{align*}
 \| E_9 \|_{H^\sigma}  
 = \ &  \|   \lambda^{-4}  \partial_x \left [ ( \lambda^2 u_h ^{ap}  )^2   \right ]     \|_{H^\sigma} 
   \lesssim 
  \|   ( \lambda^2 u_h ^{ap}  )^2       \|_{ H^{\sigma-3} } .
   \end{align*}
   If $\sigma-3 >0$, then this can be estimated as in the estimate for $E_5$. If $\sigma-3 < 0$, then we can proceed as follows
      \begin{align*}
 \| E_9 \|_{H^\sigma}  
  \lesssim &  \ 
  \|     \lambda^2 u_h ^{ap}         \|_{ L^\infty}   \|     \lambda ^2u_h ^{ap}         \|_{ L^2} 
  \lesssim 
 n^{-\delta/2-s+2} \cdot n^{2-s}   .
 \end{align*}
 From here we obtain 
     \begin{align*}
 \| E_9 \|_{H^\sigma}  \lesssim n^{4-2s-\delta/2}  , \quad \text{ for } n\gg 1.
 \end{align*}

   {\em Estimating $E_{10}$.}
As in the estimate for $E_9$, we show the details in the case $\sigma - 3<0$. The other case is less delicate. We have 
    \begin{align*}
 \| E_{10}\|_{H^\sigma}  
 = \ & 2  \|   \lambda^{-4}  \partial_x \left [  \lambda^2 u_h ^{ap}   \lambda u_\ell    \right ]     \|_{H^\sigma} 
 \\ &  \lesssim 
  \|  \lambda^2 u_h ^{ap}   \lambda^2 u_\ell        \|_{ L^2} 
 \\ &  \lesssim 
  \|     \lambda ^2u_h ^{ap}         \|_{ L^\infty}   \|     \lambda ^2u_\ell         \|_{ L^2} 
 \\ &  \lesssim 
 n^{-\delta/2-s+2} \cdot n^{ \delta/2-1} .
 \end{align*}
 From here we obtain 
     \begin{align*}
 \| E_{10} \|_{H^\sigma}  \lesssim n^{1-s }  , \quad n\gg 1 .
 \end{align*}

   {\em Estimating $E_{11}$.}
  We have 
    \begin{align*}
 \| E_{11}\|_{H^\sigma}  
 = \ & \frac12  \|   \lambda^{-4}  \partial_x \left [ \left( \lambda^2  \partial_x u_h ^{ap}  \right)^2    \right ]     \|_{H^\sigma} 
 \\ &  \lesssim 
  \|   \left( \lambda^2  \partial_x u_h ^{ap}  \right)^2        \|_{ H^{\sigma-3} }  .
  \end{align*}
  Using Lemma \ref{HH}, we with $r = \sigma-2$, we have 
    \begin{align*}
 \| E_{11}\|_{H^\sigma}  
 &  \lesssim 
  \|     \lambda ^2 \partial_x u_h ^{ap}        \|_{ H^{\sigma-2} }   \|     \lambda ^2 \partial_x u_h ^{ap}         \|_{ H^{\sigma-3} } 
 \\ &  \lesssim 
 n^{\sigma+1-s} \cdot n^{ \sigma-s} .
 \end{align*}
 From here we obtain 
     \begin{align*}
 \| E_{11} \|_{H^\sigma}  \lesssim n^{2\sigma-2s+1 }   , \quad \text{ for } n\gg 1.
 \end{align*}

   {\em Estimating $E_{12}$.}
As in the estimate for $E_9$, we show the details in the case $\sigma - 3<0$. The other case is less delicate. We have 
    \begin{align*}
 \| E_{12}\|_{H^\sigma}  
 = \ & \|   \lambda^{-4}  \partial_x \left [  \lambda ^2 \partial_x u_h ^{ap}    \lambda ^2 \partial_x u_\ell   \right ]     \|_{H^\sigma} 
  \\ &  \lesssim 
  \|   \lambda ^2 \partial_x u_h ^{ap}    \lambda ^2 \partial_x u_\ell       \|_{ L^2} 
 \\ &  \lesssim 
  \|     \lambda ^2\partial_x  u_h ^{ap}         \|_{ L^\infty}   \|     \lambda ^2\partial_x  u_\ell         \|_{ L^2} 
 \\ &  \lesssim 
 n^{-\delta/2-s+3} \cdot n^{ \delta/2-1} .
 \end{align*}
 From here we obtain 
     \begin{align*}
 \| E_{12} \|_{H^\sigma}  \lesssim n^{2-s-\delta/2 } \quad \text{ for } n\gg 1 .
 \end{align*}
 
 Combining the above twelve estimates, we have the following lemma. 
 \begin{lemma}
 Let $s>7/2$, $1/3<\delta<1$ and $2-\delta <\sigma < s-1$. Then we have the following estimate on the error
 \begin{align}
 \| E\|_{H^\sigma} \lesssim n^{-\alpha} , \quad \text{ for } n \gg 1,
 \end{align}
 where 
\begin{align} \label{alpha-def}
\alpha =  s+1-\sigma-\delta  >0.
\end{align}
 \end{lemma}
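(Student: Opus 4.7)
The proof is a triangle-inequality bookkeeping combining the twelve bounds derived above. I would start from
\[
\|E\|_{H^\sigma} \le \sum_{j=1}^{12} \|E_j\|_{H^\sigma}
\]
and show that each summand is $\lesssim n^{-\alpha}$ with $-\alpha = \sigma + \delta - s - 1$. Positivity of $\alpha$ is immediate: $\sigma < s-1$ and $\delta < 1$ give $\alpha > 2 - \delta > 1$.

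To organize the comparison, I would recast each estimate $\|E_j\|_{H^\sigma} \lesssim n^{\beta_j}$ in the form $n^{-\alpha-\gamma_j}$ and verify $\gamma_j \ge 0$ under the standing hypotheses. The leading term is $E_1$, which produces exactly $n^{-\alpha}$ and therefore pins down the final rate. The remaining terms split into natural groups: $E_2, E_4, E_8$ each carry an additional factor $n^{-\delta/2}$; $E_3, E_5, E_7$ absorb factors of the form $n^{2-s-3\delta/2}$ or $n^{4-s-3\delta/2}$, which are negative thanks to $s > 7/2$; and $E_9, E_{11}, E_{12}$ are handled by combining $s > 7/2$ with the lower bound on $\sigma$.

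The two hypotheses of the lemma that are used sharply are worth highlighting. The term $E_6$ contributes a factor $n^{1-3\delta}$, so the condition $\delta \ge 1/3$ is precisely what absorbs it. The term $E_{10}$ has bound $n^{1-s}$, which exceeds $n^{-\alpha}$ unless $\sigma + \delta \ge 2$; this is what forces the threshold $\sigma > 2 - \delta$ in the statement. The one delicate point I foresee is $E_{11}$ near the upper end of the $\sigma$-range: the stated bound $n^{2\sigma - 2s + 1}$ only fits inside $n^{-\alpha}$ when $\sigma \le s + \delta - 2$, so for $\sigma \in (s + \delta - 2,\, s - 1)$ I would replace the Lemma \ref{HH} estimate by $\|(\lambda \partial_x u_h^{ap})^2\|_{H^{\sigma-3}} \lesssim \|\lambda \partial_x u_h^{ap}\|_{L^\infty}\|\lambda \partial_x u_h^{ap}\|_{H^{\sigma-3}}$, which yields the sharper bound $n^{\sigma + 1 - 2s - \delta/2}$ and absorbs comfortably once $s > 7/2$. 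Apart from this one refinement, the argument is a purely arithmetic comparison of the twelve exponents, and summing them gives the claimed bound $\|E\|_{H^\sigma} \lesssim n^{-\alpha}$.
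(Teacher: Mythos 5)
Your proposal takes the same route as the paper, which proves this lemma by nothing more than summing the twelve estimates; your exponent bookkeeping matches: $E_1$ gives exactly $n^{-\alpha}$ with $-\alpha=\sigma+\delta-s-1$, the condition $\delta>1/3$ is what absorbs the excess factor $n^{1-3\delta}$ in $E_6$, and $\sigma>2-\delta$ is what absorbs $E_{10}\lesssim n^{1-s}$. Your remark about $E_{11}$ is a genuine catch, not a quibble: the stated bound $n^{2\sigma-2s+1}$ lies below $n^{-\alpha}$ only when $\sigma\le s+\delta-2$, whereas the paper's own working range $s-2+\delta<\sigma<s-1$ is precisely where this fails, so a sharpening of the $E_{11}$ estimate (or a restriction of $\sigma$) is indeed needed, and your symmetric product bound $\|(\lambda\partial_x u_h^{ap})^2\|_{H^{\sigma-3}}\lesssim\|\lambda\partial_x u_h^{ap}\|_{L^\infty}\|\lambda\partial_x u_h^{ap}\|_{H^{\sigma-3}}$ is the right fix.

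Three caveats. First, in this section the paper uses $\lambda$ for $1-\partial_x^2$ (this is how $\lambda^{-2}\partial_x$ gains three derivatives in $E_9$, $E_{11}$, $E_{12}$, and how $\|\lambda\partial_x u_h^{ap}\|_{L^\infty}\lesssim n^{3-\delta/2-s}$ arises in $E_{12}$), so your improved $E_{11}$ exponent should read $n^{\sigma+3-2s-\delta/2}$ rather than $n^{\sigma+1-2s-\delta/2}$; it is still $\lesssim n^{-\alpha}$ because $s+3\delta/2>4$ — the same inequality that handles $E_5$, which therefore uses $\delta>1/3$ as well as $s>7/2$, so attributing the negativity of $4-s-3\delta/2$ to $s>7/2$ alone is slightly imprecise. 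Second, the $L^\infty$--$H^{\sigma-3}$ Leibniz bound requires $\sigma\ge 3$; for $\sigma<3$ one should fall back on the $L^2$-based bound used for $E_9$ and $E_{12}$. Third, the same kind of slack you spotted in $E_{11}$ also occurs in the paper's stated $E_{12}$ bound $n^{2-s-\delta/2}$ at the low end of the range: it fits under $n^{-\alpha}$ only when $\sigma\ge 3-3\delta/2$, which is strictly stronger than $\sigma>2-\delta$, so your grouping of $E_{12}$ as unproblematic glosses over a point that deserves the same treatment (it is likewise fixable, e.g.\ by estimating the product in $H^{\sigma-3}$ with the low-frequency factor placed in $H^{\sigma-2}$ via Lemma \ref{HH}). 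None of this changes the conclusion of the lemma, but a complete write-up should record these adjustments.
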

 
 \subsubsection{Estimating the distance between the actual and approximate solutions} 
 We consider two sequences of solutions to the RZQ equation by solving the initial value problem \ref{QR-nl}  with initial data 
 $$
 u_{\omega, n}(x,0) = u^{ap} _{\omega, n} (x,0 ) =  \omega n^{-1}   \varphi (n^{-\delta }x) 
 +n^{-\delta/2-s} \widetilde \varphi \left( \frac{x}{n^\delta} \right) \cos(nx ). 
 $$
 Since the initial data is bounded in $H^s$, by the well-posedness result, the collection of solutions exists on a time interval independent of $n$ and $\omega$. Additionally, it is easy to check that the difference $   u_{1, n}(x,0)  -  u_{-1, n}(x,0) $ goes to zero in $H^s$, and the difference $u^{ap} _{1, n} (x,t )-u^{ap} _{-1, n} (x,t )$ remains bounded away from each other in $H^s$ for $t>0$. 
 
 To prove that the actual solutions remain bounded away from each other for any positive time, we will show that the difference between the actual and approximate solutions decay as $n\rightarrow \infty$. For this we let 
 \begin{align}
 v =  u^{ap}_{\omega, n}  -  u _{\omega, n} , \quad \text{ and } \quad  w =  u^{ap}_{\omega, n}   +  u _{\omega, n}.
 \end{align}
A straightforward calculation shows that $v$ satisfies the following ivp
\begin{align}\notag
 & \partial_t v = E -  \frac12 \partial_x\left (vw  \right)  -
 \frac12 \lambda^{-2} \partial_x \left  [\partial_x^2 v  \partial_x^2 w   \right  ]
 - \lambda^{-2}  \partial_x \left [\partial_x   v \partial_x w   \right ]  
 \\ & \ \ \ \ \  -  \lambda^{-4} \partial_x \left [    \lambda  ^2    v \lambda   ^2   w +\frac12    \lambda  ^2  \partial_x   v \lambda   ^2\partial_x     w
   \right ]
\\ & v(x,0) = 0 .
\end{align}
The following lemma can be proved in a methodology similar to Lemma \ref{error-periodic} in the periodic case. 
 \begin{lemma} \label{error-nonperiodic}
The $H^\sigma$, $\max\{5/2, 2-\delta\} <\sigma<s-1$, norm of $v$ can be estimated by
$$
\| v\|_{H^\sigma} \le C n^{-\alpha} .
$$
where $\alpha = s+1-\sigma-\delta  >0$ and $1/2<\delta<1$ (see \eqref{alpha-def}). 
\end{lemma}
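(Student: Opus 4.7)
The plan is to mimic the proof of Lemma \ref{error-periodic} from the periodic case, with the twelve-term error bound $\|E\|_{H^\sigma}\lesssim n^{-\alpha}$ from the previous lemma as the main new input. First I would compute the $H^\sigma$ energy of $v$: apply $\lambda^\sigma$ to the evolution equation for $v$, pair with $\lambda^\sigma v$, and integrate over $\rr$, obtaining
$$
\tfrac{1}{2}\tfrac{d}{dt}\|v\|_{H^\sigma}^2 \;=\; \int \lambda^\sigma E\cdot \lambda^\sigma v\, dx \;+\; (\text{five nonlinear terms in } v,w),
$$
where the five terms are exactly the ones appearing in \eqref{approx_sln}. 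The forcing term is estimated by Cauchy-Schwarz and the previous lemma as $\lesssim n^{-\alpha}\|v\|_{H^\sigma}$.

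Next I would estimate each of the five nonlinear terms by $C\|w\|_{H^s}\|v\|_{H^\sigma}^2$, as in the periodic case. The transport term $\tfrac{1}{2}\partial_x(vw)$ is handled by writing
$$
\lambda^\sigma \partial_x(vw) \;=\; \bigl(\lambda^\sigma\partial_x(vw) - w\,\lambda^\sigma\partial_x v\bigr) + w\,\lambda^\sigma\partial_x v,
$$
bounding the commutator via Lemma \ref{ccm} with $\rho=\sigma$ and $r=s-1$, and handling the remaining piece by integration by parts together with Sobolev embedding on $\partial_x w$ (this is where $\sigma>5/2$ enters). The nonlocal terms $\lambda^{-1}\partial_x[\partial_x^2 v\,\partial_x^2 w]$, $\lambda^{-1}\partial_x[\partial_x v\,\partial_x w]$, and $\lambda^{-2}\partial_x[\lambda^2 v\,\lambda^2 w + \tfrac{1}{2}\lambda^2\partial_x v\,\lambda^2\partial_x w]$ are controlled by Cauchy-Schwarz followed by Lemma \ref{HH} (with $r=\sigma-2>1/2$, using $\sigma>5/2$) or the algebra property, distributing derivatives so that at most $s$ derivatives fall on $w$ and at most $\sigma$ on $v$.

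The uniform input that makes everything close is $\|w\|_{H^s}\le C$ independent of $n$: for $u_{\omega,n}$ this is the solution size bound \eqref{soln-bd} of Theorem \ref{WP}, applicable since the initial data is uniformly bounded in $H^s$; for $u^{ap}_{\omega,n} = u_\ell + u_h^{ap}$ it follows from estimate \eqref{l-est} on $u_\ell$ together with Lemma \ref{hk2009} applied at $\sigma=s$. Combining all the estimates produces the differential inequality
$$
\tfrac{d}{dt}\|v\|_{H^\sigma}^2 \;\le\; C\bigl(n^{-\alpha}\|v\|_{H^\sigma} + \|v\|_{H^\sigma}^2\bigr),
$$
and since $v(x,0)=0$, a standard Gronwall argument (after dividing by $\|v\|_{H^\sigma}$) gives $\|v(t)\|_{H^\sigma}\le Cn^{-\alpha}$ uniformly on $[0,T]$.

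The main obstacle I expect is the highest-derivative term $\lambda^{-1}\partial_x[\partial_x^2 v\,\partial_x^2 w]$: one must be careful not to lose derivatives on $v$, noting that $\lambda^{-1}\partial_x$ effectively gains one derivative, so the product of two second derivatives is to be controlled in $H^{\sigma-2}$ rather than $H^\sigma$, which is precisely the range where Lemma \ref{HH} allows splitting $s$ derivatives onto $w$ and $\sigma$ derivatives onto $v$. A secondary bookkeeping point is the lower bound $\sigma > 2-\delta$ inherited from the error estimate, which is needed to ensure $\alpha>0$ and hence decay rather than growth of the right-hand side; otherwise all the nonlinear estimates are identical to their periodic counterparts, with $\mathbb T$ replaced by $\rr$ in the domain of integration.
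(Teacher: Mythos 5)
Your proposal matches the paper's approach: the paper proves this lemma by simply repeating the periodic-case energy argument of Lemma \ref{error-periodic} with the new error bound $\|E\|_{H^\sigma}\lesssim n^{-\alpha}$ as the forcing input, arriving at the same differential inequality $\frac{d}{dt}\|v\|_{H^\sigma}^2 \le C\bigl(n^{-\alpha}\|v\|_{H^\sigma}+\|v\|_{H^\sigma}^2\bigr)$ and Gronwall conclusion with $v(0)=0$. One small parameter correction: applying Lemma \ref{ccm} with $\rho=\sigma$ requires $\sigma+1\le r$, so you should take $r=s$ (legitimate since $w\in H^s$ and $\sigma<s-1$) rather than $r=s-1$, which would force the unnecessary restriction $\sigma\le s-2$.
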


 \subsubsection{Completing the proof on $\mathbb R$} 
 Let $s > 7/2$ be fixed. We consider $u_{1, n}(x,t) $ and $u_{-1, n}(x,t) $, the unique solutions to the ivp for the RZQ equation with initial data $u^{ap} _{1, n} (x,0) $ and  $u^{ap} _{-1, n} (x,0) $ respectively. From the well-posedness argument, these solutions belong to $C([0, T]; H^s)$ for some  $T>0$ independent of $n\gg1 $ and $1/3<\delta <1$. 
 Additionally, applying the well-posedness theorem in $H^k$ for $k = \lfloor s+2 \rfloor$, we have 
 $$
 \| u_{\omega, n} (t) \|_{H^k } \lesssim  \| u_{\omega, n} ^{ap} (0) \|_{H^k } , \quad 0\le t \le T , \quad n\gg1.  
 $$
 A simple calculation shows 
  $$
  \| u_{\omega, n} ^{ap} (0) \|_{H^k }  \lesssim  n^{k-s} .  
 $$
 Therefore, we obtain the following estimate for the difference of the actual and approximate solutions in $H^k$ for $0\le t \le T$ 
 \begin{align}
  \| u_{\omega, n} (t) - u_{\omega, n}^{ap}  (t)  \|_{H^k } \lesssim n^{k-s} , \quad n\gg1 . 
 \end{align}
 Applying the interpolation inequality
 $$
 \|f\|_{H^s} \le \|f\|_{H^{s_1}} ^{(s_2-s)/(s_2-s_1) }  \|f\|_{H^{s_2}}^{(s-s_1)/(s_2-s_1) } ,
 $$
 with $s_2 = k$ and $s_1 = \sigma$ we obtain 
  \begin{align}
  \| u_{\omega, n} (t) - u_{\omega, n}^{ap}  (t)  \|_{H^s } \le  & \   \| u_{\omega, n} (t) - u_{\omega, n}^{ap}  (t)  \|_{H^\sigma }   ^{(k-s)/( k-\sigma) }   \| u_{\omega, n} (t) - u_{\omega, n}^{ap}  (t)  \|_{H^k }^{(s-\sigma)/( k-\sigma) }   
  \notag \\
  & \lesssim  n^{-\alpha(k-s)/( k-\sigma) } n^{(k-s) (s-\sigma )/( k-\sigma) }
  \notag  \\
   & \lesssim    n^{-(\sigma -s+ \alpha)(k-s)/( k-\sigma) } 
 \notag   \\
   & \lesssim    n^{-(1-\delta)(k-s)/( k-\sigma) } , \quad n\gg1.
 \end{align}
 It is easy to see the above exponent is negative, and the difference between the actual and approximate solutions tends to zero as $n\rightarrow \infty$. 
 From here, a calculation which mirrors the periodic case shows that the approximate solutions remain bounded away from each for all positive time, and in particular 
\begin{align}
\liminf_{n\rightarrow\infty} \| u_{1,n} - u_{-1,n} \| _{H^s}  \gtrsim |\sin(t)| .
\end{align}
This complete the proof in the non-periodic case.

\section{Proof of Theorem \ref{illp}} 

We first note that the pseudo-peakon solutions of the RZQ equation found in \cite{qr} are of the form
\[
u = \frac{c}{2}e^{-|\xi|}(1+|\xi|), \ \ \ \xi=x-ct.
\]
We seek to show that these solutions live in the Sobolev space $H^s$ for $s<7/2$.  Let's consider the pseudo-peakon centered at $t=0$ and  with $c=1$.  By taking the Fourier transform, we find that
\[
\wh{u} = \int_\rr e^{-ix\xi}u(x)dx = \frac{1}{1+\xi^2}+\int_{0}^\infty xe^{-x}\cos(x\xi)dx,
\]
where the last integral was obtained by symmetry of our function and the fact that $e^{-ix\xi}=\cos(x\xi)-i\sin(x\xi)$.  To compute this remaining integral, consider
\[
F(\xi) = \int_0^\infty e^{-x}\sin(x\xi)dx=\frac{\xi}{1+\xi^2},
\]
which is obtained via integration by parts.  Then we have that
\[
F'(\xi) = \frac{d}{d\xi}\int_0^\infty e^{-x}\sin(x\xi)dx=\int_0^\infty xe^{-x}\cos(x\xi)dx = \frac{1-\xi^2}{(1+\xi^2)^2}.
\]
Thus, we find that
\[
\wh{u} = \frac{2}{(1+\xi^2)^2}.
\]
Now we consider the Sobolev norm of our pseudo-peakons.  We see that
\[
\|u\|_{H^s}^2 = 4\int_\rr(1+\xi^2)^{s-4}d\xi,
\]
which is bounded for $s<7/2$.  Thus, $u \in H^s$ for $s<7/2$.  

From \cite{rzq} and again in \cite{qr}, we note that the multi-pseudo-peakon solutions are of the form
\[
u=\sum_{j=1}^N\frac{p_j(t)}{2}e^{-|x-q_j(t)|}(1+|x-q_j(t)|),
\]
where $p_j(t), q_j(t)$ satisfy the following canonical Hamiltonian dynamical system
\[
\dot{p_j} = -\frac{\p H}{\p q_j}, \ \ \dot{q_j} = \frac{\p H}{\p p_j},
\]
with the Hamiltonian function
\[
H=\frac12\sum_{i,j=1}^N p_ip_je^{-|q_i-q_j|}.
\]
Since this coincides exactly with the finite-dimensional peakon dynamical system of the CH equation, we may apply the same methods from \cite{hhg} to find that the RZQ equation is ill-posed in $H^s$ for $s<7/2$.

\section{Proof of H\"older Continuity}
Let $u,w$ be two solutions to the RZQ equation with initial data $u_0,w_0,$ respectively. 
Define $v = u-w, v_0 = u_0 - w_0.$ 
Then  
    \[v_t =  -\frac12\lambda^{-2}\partial_x\left[\lambda^2v\lambda^2(u+w)\right] - \frac12\lambda^{-4}\partial_x\left[2\lambda^2v\lambda^2(u+w) + \lambda^2v_x\lambda^2(u+w)_x\right].\] 
The $H^r$ energy of $v$ is then given by
    \[\frac12 \frac{d}{dt} \|v(t)\|_{H^r}^2 = 
        - \frac12\int \left[\lambda^{r-2}\partial_x\left[\lambda^2v\lambda^2(u+w)\right] + \lambda^4\partial_x\left[2\lambda^2v\lambda^2(u+w) + \lambda^2v_x\lambda^2(u+w)_x\right]\right]\lambda^{r}vdx.\]
\noindent Taking the absolute value and applying the triangle inequality we have 
 \begin{align} 
  \notag  \left|\frac12 \frac{d}{dt} \|v(t)\|_{H^r}^2 \right| 
        &\lesssim \left| \int \lambda^{r-2}\partial_x\left[\lambda^2v\lambda^2(u+w)\right] \lambda^{r}vdx\right|
        \\ &+ \left| \int \lambda^{r-4}\partial_x\left[2\lambda^2v\lambda^2(u+w) + \lambda^2v_x\lambda^2(u+w)_x\right] \lambda^{r}vdx\right|. 
         \label{energy}
\end{align}
 
\noindent \textbf{Lipschitz in $A_1$}. To bound the first term of \eqref{energy}, we note  
\begin{align}
\notag    \left| \int \lambda^{r-2}\partial_x\left[\lambda^2v\lambda^2(u+w)\right] \lambda^{r}vdx\right|
        &\leq \left| \int \lambda^{r-2}\left[-vz_{xxx} + v_{xx}z_{xxx} + v_{xxx}(z_{xx}-z)\right]\lambda^{r}vdx\right|
            \\&+ \left| \int \lambda^{r-2}\partial_x\left[vz - v_xz_x\right]\lambda^{r}vdx\right|,\label{d2energy}
\end{align}
where $z = u + w.$
Rewriting the first term of \eqref{d2energy} and using the triangle inequality, we have
    \begin{align}
         \notag \left| \int \lambda^{r-2}\left[-vz_{xxx} + v_{xx}z_{xxx} + v_{xxx}(z_{xx}-z)\right]\lambda^{r}vdx\right|
         & \leq \left|\int \lambda^{r-2}\left[-vz_{xxx} + z_xv_{xx}\right]\lambda^{r}vdx\right| 
            \\ & + \left| \int \lambda^{r-2} \partial_x((z_{xx}-z)v_{xx})\lambda^{r}vdx\right|
            \label{split}
    \end{align} 
We may handle the new second term by commuting $\lambda^{r-2}\partial_x$ with $(z_{xx}-z)$.
This replacement along with the triangle inequality yields   
    \begin{align}
    \notag    \left| \int \lambda^{r-2}\left[-vz_{xxx} + v_{xx}z_{xxx} + v_{xxx}(z_{xx}-z)\right]\lambda^{r}vdx\right|
            &\leq \left|\int \lambda^{r-2}\left[-vz_{xxx} + z_xv_{xx}\right]\lambda^{r}vdx\right|   
            \\ 
            \notag &+ \left| \int \left[\lambda^{r-2} \partial_x, (z_{xx}-z)\right]v_{xx}\lambda^{r}vdx \right|
            \\ &+ \left| \int (z_{xx}-z)\lambda^{r-2} v_{xxx}\lambda^{r}vdx \right|.\label{comm}
    \end{align} 
To bound the first term of \eqref{comm}, we apply the Cauchy-Schwarz inequality to get
\begin{equation}\label{cs2}
    \left|\int \lambda^{r-2}\left[-vz_{xxx} + z_xv_{xx}\right]\lambda^{r}vdx\right|  
         \leq \|vz_{xxx}\|_{H^{r-2}}\|v\|_{H^r} 
            + \|z_xv_{xx}\|_{H^{r-2}}\|v\|_{H^r}.
\end{equation}
To proceed, we require the following   estimate. 
\begin{lemma}\label{lemma3}
If $1 \leq \mu \leq 3, \sigma - 3 > \frac12,$ and $\mu + \sigma \geq 6,$ then 
    \[\|fg\|_{H^{\mu-3}} \leq c_{\mu,\sigma} \|f\|_{H^{\sigma-3}} \|g\|_{H^{\mu-3}}.\]
\end{lemma}
\begin{proof}
    We provide the details of Lemma \ref{lemma3} on $\mathbb{R}$, the periodic case is similar. 
We first note 
\begin{equation}
    \| fg \|_{H^{\mu - 3}}^2 = \int(1 + \xi^2)^{\mu-3} \left|\widehat{fg}(\xi)\right|^2 d\xi 
\end{equation}
Using the definition of convolution,
\begin{equation*}
    \| fg \|_{H^{\mu - 3}}^2 = \int(1 + \xi^2)^{\mu-3} \left|\int \hat{f}(\eta) \hat{g}(\xi - \eta)d\eta\right|^2 d\xi, 
\end{equation*}
which may be rewritten as 
\begin{equation}
    \| fg \|_{H^{\mu - 3}}^2 = 
        \int(1 + \xi^2)^{\mu-3} \left|\int (1+\eta^2)^{\frac{\sigma-3}{2}}\hat{f}(\eta)(1+\eta^2)^{\frac{3-\sigma}{2}}\hat{g}(\xi - \eta)d\eta\right|^2 d\xi. 
\end{equation}
Then by Cauchy-Schwarz, 
\begin{equation}
    \| fg \|_{H^{\mu - 3}}^2 \leq 
        \|f\|_{H^{\sigma - 3}} \int(1 + \xi^2)^{\mu-3}\int(1+\eta^2)^{3-\sigma}\left| \hat{g}(\xi - \eta)\right|^2 d\eta d\xi. 
\end{equation}
Implementing the change of variables $\tilde{\eta} = \xi - \eta$ and dropping the tildes, we have 
\begin{equation}
    \| fg \|_{H^{\mu - 3}}^2 \leq 
        \|f\|_{H^{\sigma - 3}} \int(1 + \xi^2)^{\mu-3}\int(1+(\xi - \eta)^2)^{3-\sigma}\left| \hat{g}(\eta)\right|^2 d\eta d\xi. 
\end{equation}
A change of the order of integration then yields
\begin{equation}\label{above}
    \| fg \|_{H^{\mu - 3}}^2 \leq 
        \|f\|_{H^{\sigma - 3}} \int \left| \hat{g}(\eta)\right|^2 \int(1 + \xi^2)^{\mu-3}(1+(\xi - \eta)^2)^{3-\sigma} d\xi d\eta. 
\end{equation}
For the integral in \eqref{above} to be bounded by $c_{\mu,\sigma}\|g\|_{\mu-3},$ it remains to show
\begin{equation}\label{wwts}
    I(\eta) := \int(1 + \xi^2)^{\mu-3}(1+(\xi - \eta)^2)^{3-\sigma}                 d\xi 
            \leq c_{\mu,\sigma}(1+\eta^2)^{\mu-3}.
\end{equation}
After making the change of variables $\tilde{\xi} = \xi - \eta$, dropping the tilde, and rearranging, we arrive at 
\begin{equation}
    I(\eta) = \int \frac{d \xi}{(1+(\xi - \eta)^2)^{3-\mu}(1 + \xi^2)^{\sigma-3}}.
\end{equation}
Here, we note that since $3 - \mu > 0$ and $\sigma - 3 > \frac12,$ the integral is finite for all $\eta.$ 
Additionally, we may consider only the case in which $\eta \geq 0$ since when $\eta < 0,$ the change of variables $\tilde{\xi} = -\xi$ returns the problem to the $\eta \geq 0$ case.  
Lastly, for $\eta >0 $ and $\xi \leq 0$, we have $\xi^2 \geq 2\xi\eta$ which gives $(1+(\xi - \eta)^2)^{3-\mu}\geq(1 + \eta^2)^{3-\mu}$. 
Thus \eqref{wwts} holds on $(-\infty, 0]$ and we are left to show 
\begin{equation}
    J(\eta) := \int_0^{\infty}\frac{d \xi}{(1+(\xi - \eta)^2)^{3-\mu}(1 + \xi^2)^{3-\sigma}} \leq \frac{c_{\mu,\sigma}}{(1 + \eta^2)^{3-\mu}}, \ \ \eta>0.
\end{equation}
We will estimate by breaking the domain of integration into the subintervals 
    $[0, \frac\eta2], [\frac\eta2,\eta], [\eta,\frac{3\eta}{2}],$ $ [\frac{3\eta}{2},\infty)$. 
For $\xi \in [0,\frac\eta2 ],$ we have 
    $1 + (\xi - \eta)^2 \geq 1 + \frac{\eta^2}{4}$ and thus 
\begin{equation*}
    \int_0^{\frac{\eta}{2}}\frac{d \xi}{(1+(\xi - \eta)^2)^{3-\mu}(1 + \xi^2)^{\sigma-3}} 
        \leq \frac{1}{(1+\eta^2/4)^{3-\mu}}\int_0^{\frac{\eta}{2}}\frac{d \xi}{(1 + \xi^2)^{\sigma-3}}.
\end{equation*}
Since $\mu - 3 \leq 0$, we see 
    $(1 + \frac{\eta^2}{4})^{\mu-3} = \left[\frac14( 4 + \eta^2)\right]^{\mu-3} \leq 4^{3-\mu}(1 + \eta^2)^{\mu-3}$.
Therefore 
\begin{equation}
    \int_0^{\frac{\eta}{2}}\frac{d \xi}{(1+(\xi - \eta)^2)^{3-\mu}(1 + \xi^2)^{\sigma-3}} 
        \leq \frac{4^{3-\mu}c_s}{(1+\eta^2)^{3-\mu}},
\end{equation}
where 
    \[c_s = \int_0^{\infty}\frac{d \xi}{(1 + \xi^2)^{\sigma-3}}.\]
The interval $\xi \in \left[\frac{3\eta}{2}, \infty
\right)$ can be estimated in the same manner since the inequality $1 + (\xi - \eta)^2 \geq 1 + \frac{\eta^2}{4}$ still holds. 
As such, 
\begin{equation}
    \int_{\frac{3\eta}{2}}^{\infty}\frac{d \xi}{(1+(\xi - \eta)^2)^{3-\mu}(1 + \xi^2)^{\sigma-3}} 
        \leq \frac{4^{3-\mu}c_s}{(1+\eta^2)^{3-\mu}},
\end{equation}
For $\xi \in \left[\frac\eta2,\eta\right],$ we have $(1 + \xi^2)^{\sigma - 3} \geq (1 + \frac{\eta^2}{4})^{\sigma-3}$. 
We additionally note that for $a \geq 0, 1 + a^2 \geq \frac12(1+a)^2.$
Letting $a = \eta - \xi$ and raising to the $3 - \mu$ power yields
    $(1 + (\xi - \eta)^2)^{3-\mu} \geq 2^{\mu - 3}(1 + \eta - \xi)^{2}$. 
Then 
\begin{equation}\label{J3}
    \int_{\frac{\eta}{2}}^{\eta}\frac{d \xi}{(1+(\xi - \eta)^2)^{3-\mu}(1 + \xi^2)^{\sigma-3}} 
        \leq \frac{2^{3-\mu}4^{\sigma-3}}{(1+\eta^2)^{\sigma-3}}\int_{\frac{\eta}{2}}^{\eta}\frac{d\xi}{(1+\eta-\xi)^{2(3-\mu)}}.
\end{equation}
It remains to show 
\begin{equation*}
    K(\eta) := 
         \int_{\frac{\eta}{2}}^{\eta}\frac{d\xi}{(1+\eta-\xi)^{2(3-\mu)}} < \infty.
\end{equation*}
We consider three cases: $\mu \in \left[1,\frac52\right), \mu = \frac52,$ and $\mu \in \left(\frac52, 3\right]$. 
For $\mu \in \left[1,\frac52\right)$, we have $2\mu-5 < 0$ and so evaluating the integral gives 
\begin{equation}\label{K1}
    K(\eta) = 
         \frac{1}{5-2\mu}\left[1-(1+\eta/2)^{2\mu-5}\right] 
         \leq \frac{1}{5-2\mu}.
\end{equation}
Combining \eqref{J3} and \eqref{K1} and using the fact that $3-\sigma \leq \mu - 3$, we have
\begin{equation}
    \int_{\frac{\eta}{2}}^{\eta}\frac{d \xi}{(1+(\xi - \eta)^2)^{3-\mu}(1 + \xi^2)^{\sigma-3}} 
        \leq \frac{2^{3-\mu}4^{\sigma-3}}{(5-2\mu)(1+\eta^2)^{\sigma-3}}
        \leq \frac{2^{3-\mu}4^{\sigma-3}}{(5-2\mu)(1+\eta^2)^{3-\mu}}.
\end{equation}
For $\mu \in \left(\frac52,3\right]$, we have $2\mu-5 > 0$ and thus
\begin{equation*}
    K(\eta) = 
         \frac{1}{2\mu-5}\left[(1+\eta/2)^{2\mu-5})-1\right] 
         \leq \frac{1}{2\mu-5}\left(1+\eta/2\right)^{2\mu-5}.
\end{equation*}
Noting that for $a \geq 0, \frac12(1+a)^2 \leq 4 + a^2$ and substituting in $a = \frac\eta2,$ we find 
    $(1 + \frac\eta2)^{2\mu-5} \leq 2^{\mu-5/2}(1 + \eta^2)^{\mu-5/2}.$ 
The above estimate then becomes
\begin{equation}\label{K2}
    K(\eta) \leq  
         \frac{2^{\mu-5/2}(1 + \eta^2)^{\mu-5/2}}{2\mu-5}.
\end{equation}
Combining \eqref{J3} and \eqref{K2} and using the fact that $(\sigma-3) - (\mu - 5/2) \geq 3 - \mu$, we have
\begin{equation}
    \int_{\frac{\eta}{2}}^{\eta}\frac{d \xi}{(1+(\xi - \eta)^2)^{3-\mu}(1 + \xi^2)^{\sigma-3}} 
        \leq \frac{2^{1/2}4^{\sigma-3}}{(2\mu-5)(1+\eta^2)^{3-\mu}}.
\end{equation}
Finally, if $\mu = \frac52,$ 
\begin{equation}\label{K3}
    K(\eta) = 
        \int_{\frac{\eta}{2}}^{\eta}\frac{d\xi}{(1+\eta-\xi)}
        = \ln\left(1 + \frac\eta2\right).
\end{equation}
Since $\ln\left(1 + \frac\eta2\right) \leq c_{\sigma}'\left(1 + \frac\eta2\right)^{\sigma-7/2}$, we have by combining \eqref{J3} and \eqref{K3}, 
\begin{equation}
    \int_{\frac{\eta}{2}}^{\eta}\frac{d \xi}{(1+(\xi - \eta)^2)^{3-\mu}(1 + \xi^2)^{\sigma-3}} 
        \leq \frac{2^{3-\mu}4^{\sigma-3}c_{\sigma}}{(1+\eta^2)^{3-\mu}}.
\end{equation}
Therefore for $\mu \in \left[1,3\right],$
\begin{equation}
    \int_{\frac{\eta}{2}}^{\eta}\frac{d \xi}{(1+(\xi - \eta)^2)^{3-\mu}(1 + \xi^2)^{\sigma-3}} 
        \leq \frac{c_{\mu,\sigma}}{(1+\eta^2)^{3-\mu}}.
\end{equation}
For $\xi \in \left[\eta,\frac{3\eta}{2}\right]$, we note that $(1+\xi^2)^{\sigma - 3} \geq (1 + \eta^2)^{\sigma - 3}$. 
Using the same identity as in \eqref{J3} now with $a = \xi - \eta$, we have $(1 + (\xi - \eta)^2) \geq 2^{\mu-3}(1 + \xi - \eta)^2$. 
Thus
\begin{equation}\label{J4}
    \int_{\eta}^{\frac{3\eta}{2}}\frac{d \xi}{(1+(\xi - \eta)^2)^{3-\mu}(1 + \xi^2)^{\sigma-3}} 
        \leq \frac{2^{3-\mu}}{(1+\eta^2)^{\sigma-3}}\int^{\frac{3\eta}{2}}_{\eta}\frac{d\xi}{(1+\xi-\eta)^{2(3-\mu)}}.
\end{equation}
Remaining calculations follow similarly to the above case, and we again obtain the bound 
\begin{equation}
    \int_{\eta}^{\frac{3\eta}{2}}\frac{d \xi}{(1+(\xi - \eta)^2)^{3-\mu}(1 + \xi^2)^{\sigma-3}} 
        \leq \frac{c_{\mu,\sigma}}{(1+\eta^2)^{3-\mu}}.
\end{equation}
Thus the proof of Lemma \ref{lemma3} on $\mathbb{R}$ is complete.
\end{proof}
\noindent Thus if $r - 1 > \frac12$, then by Lemma \ref{HH},
    \begin{align*}
        \left|\int \lambda^{r-2}\left[-vz_{xxx} + z_xv_{xx}\right]\lambda^{r}vdx\right|
        &\leq c_{r}\|v\|_{H^{r-1}}\|z_{xxx}\|_{H^{r-2}}\|v\|_{H^r} 
            \\&+ c_{r}\|z_x\|_{H^{r-1}}\|v_{xx}\|_{H^{r-2}}\|v\|_{H^r}.
    \end{align*}

Replacing $z = u + w$ and the solution size estimate, we have for $r+1 < s$
    \[\left|\int \lambda^{r-2}\left[-vz_{xxx} + z_xv_{xx}\right]\lambda^{r}vdx\right|   
        \leq   c_{r,\rho}\|v\|_{H^r}^2.\]
If instead $1 < r + 1 < 3$, $s - 3 > \frac12,$ and $r + 1 + s \geq 6$, Lemma \ref{lemma3} gives
\begin{equation*}
        \left|\int \lambda^{r-2}\left[-vz_{xxx} + z_xv_{xx}\right]\lambda^{r}vdx\right|
        \leq c_{r,s}\|v\|_{H^{s-3}}\|z_{xxx}\|_{H^{r-2}}\|v\|_{H^r} 
                + c_{r,s}\|z_x\|_{H^{s-3}}\|v_{xx}\|_{H^{r-2}}\|v\|_{H^r},
\end{equation*}
and so for $r + 1 < s,$
\begin{equation}\label{term1pt1est}
    \left|\int \lambda^{r-2}\left[-vz_{xxx} + z_xv_{xx}\right]\lambda^{r}vdx\right| 
        \leq c_{r,s}(\|u_{0}\|_{H^{s}} + \|w_0\|_{H^s})\|v\|^2_{H^r} \leq c_{r,s,\rho}\|v\|^2_{H^r}.
\end{equation}
To bound the second term of inequality \eqref{comm}, we require the  Calderon-Coifman-Meyer type commutator estimate, Lemma \ref{ccm}, and the Cauchy-Schwarz inequality.
 Applying these estimates we arrive at
\begin{align} \notag
    \left| \int \left[\lambda^{r-2} \partial_x, (z_{xx}-z)\right]v_{xx}\lambda^{r}vdx \right| 
        &\leq \left\|\left[\lambda^{r-2} \partial_x, (z_{xx}-z)\right] v_{xx} \right\|_{L^2} \|v\|_{H^r} 
        \\ &
            \leq c_{r,s}\|(z_{xx}-z)\|_{H^s}\|v\|^2_{H^r}\label{term1pt2est}
,
\end{align}
for $1 \leq r \leq s-1$ and $s > \frac52$.
For the third term of inequality \eqref{comm}, integrating by parts yields
    \begin{align}\label{ibp}
        \left| \int (z_{xx}-z)\lambda^{r-2} v_{xxx}\lambda^{r}vdx \right|  & \leq \left| \int (z_{xx}-z)_x\lambda^{r-2} v_{xx}\lambda^{r}vdx \right|  \notag
        \\ & + \left| \int (z_{xx}-z)\lambda^{r-2} v_{xx}\lambda^{r}v_xdx \right|.
    \end{align} 
To handle the first term on the right-hand side of inequality \eqref{ibp}, we note by Cauchy-Schwarz and the solution-size estimate
    \[\left| \int (z_{xx}-z)\lambda^{r-2} v_{xx}\lambda^{r}vdx \right| 
        \leq \|(z_{xx}-z)_x\|_{L^{\infty}} \|v_{xx}\|_{H^{r-2}}\|v\|_{H^r} 
       \leq c_{\rho}\|v\|^2_{H^r}.\]
To handle the right-hand side of \eqref{ibp}, we notice
    \begin{equation}\label{add0}
        \left| \int (z_{xx}-z)\lambda^{r-2} v_{xx}\lambda^{r}v_xdx \right| = \left| \int (z_{xx}-z)\lambda^{r}v \lambda^{r}v_xdx + (z_{xx}-z)\lambda^{r-2} v\lambda^{r}v_xdx \right|.
    \end{equation}
Additional integration by parts and applications of Cauchy-Schwarz then yields
\begin{align*}
    \left| \int (z_{xx}-z)\lambda^{r-2} v_{xx}\lambda^{r}v_xdx \right| 
        &\leq \left| \frac12\int (z_{xx}-z)_x (\lambda^{r}v)^2dx \right| + \left| \int (z_{xx}-z)\lambda^{r-2} v_x\lambda^{r}vdx \right| 
        \\ & + \left| \int (z_{xx}-z)_x\lambda^{r-2} v\lambda^{r}vdx \right| 
            \\ &\leq \|(z_{xx}-z)_x\|_{L^{\infty}}\|v\|^2_{H^r} + \|(z_{xx}-z)\|_{L^{\infty}}\|v_{x}\|_{H^{r-2}}\|v\|_{H^r} 
            \\ & + \|(z_{xx}-z)_x\|_{L^{\infty}}\|v\|_{H^{r-2}}\|v\|_{H^r},
\end{align*}
which by the solution-size estimate  becomes
\begin{equation}\label{term1pt3est}
    \left| \int (z_{xx}-z)\lambda^{r-2} v_{xx}\lambda^{r}v_xdx \right| \leq c_{\rho}\|v\|^2_{H^r}.
\end{equation}
Combining estimates \eqref{term1pt1est}, \eqref{term1pt2est}, and \eqref{term1pt3est}, we have
\begin{equation}\label{term5}
    \left| \int \lambda^{r-2}\partial_x\left[\lambda^2v\lambda^2(u+w)\right] \lambda^{r}vdx\right|
        \leq c_{r,s,\rho}\|v\|^2_{H^r}.
\end{equation}
To bound the second term of inequality \eqref{d2energy}, we apply the triangle inequality and Cauchy-Schwarz to get
\begin{equation}\label{F}
    \left| \int \lambda^{r-2}\partial_x\left[vz - v_xz_x\right]\lambda^{r}vdx\right| 
        \leq \|vz\|_{H^{r-1}}\|v\|_{H^{r}}
            + \|v_xz_x\|_{H^{r-1}}\|v\|_{H^{r}} .
\end{equation}
For $r - 1 \geq \frac12,$ Lemma \ref{HH} yields 
\begin{align}
    \left| \int \lambda^{r-2}\partial_x\left[vz - v_xz_x\right]\lambda^{r}vdx\right|
         &\leq c_r(\|u_0\|_{H^s} + \|w_0\|_{H^s})\|v\|^2_{H^r},
\end{align} 
when $r+1 \leq s$.
If rather $1 \leq r + 2 \leq 3, s - 3 \geq \frac12,$ and $r + 2 + s \geq 6$, then by Lemma \ref{lemma3} 
\begin{equation}
    \left| \int \lambda^{r-2}\partial_x\left[vz - v_xz_x\right]\lambda^{r}vdx\right|
        \leq c_r(\|z\|_{H^{s-3}}+ \|z_x\|_{H^{s-3}})\|v\|_{H^{r-1}}\|v\|_{H^{r}} ,
\end{equation}
and so the solution size estimates gives 
    \[\left| \int \lambda^{r-2}\partial_x\left[vz - v_xz_x\right]\lambda^{r}vdx\right|
        \leq 2c_{r,s}(\|u_0\|_{H^s} + \|w_0\|_{H^s})\|v\|^2_{H^r}.\]
Combining the above estimates we have for $1 \leq r \leq s-2, s > \frac72,$ and $r \geq 6-s$
\begin{equation}\label{1stterm}
    \left| \int \lambda^{r-2}\partial_x\left[\lambda^2v\lambda^2(u+w)\right] \lambda^{r}vdx\right| \leq c_{r,s,\rho}\|v\|^2_{H^r}.
\end{equation}
For the second term of inequality \eqref{energy}, we apply the triangle inequality and Cauchy-Schwarz to get
 
\begin{align}
 \notag   \left|\int \lambda^{r-4}\partial_x\left[\lambda^2v(\lambda^2(u+w) + \lambda^2v_x(\lambda^2(u+w)_x\right]\lambda^{r}vdx\right| 
        & \leq \left\|\lambda^2v(\lambda^2(u+w))\right\|_{H^{r-3}}\|v\|_{H^r} 
        \\ &+ \left\|\lambda^2v_x(\lambda^2(u+w)_x)\right\|_{H^{r-3}}\|v\|_{H^r}. \label{term2}
\end{align} 
If $\frac52 < r \leq s + 1,$ then by Lemma \ref{HH}, 
\begin{equation*} 
        \left\|\lambda^2v(\lambda^2(u+w))\right\|_{H^{r-3}} 
            \leq c_{r}\left\|\lambda^2(u+w)\right\|_{H^{r-2}}\|\lambda^2v\|_{H^{r-3}} 
                \leq c_r\|u+w\|_{H^s}\|v\|_{H^r},
\end{equation*}
and 
\begin{equation*} 
        \left\|\lambda^2v_x(\lambda^2(u+w)_x)\right\|_{H^{r-3}} 
            \leq c_{r}\left\|\lambda^2(u+w)_x\right\|_{H^{r-2}}\|\lambda^2v_x\|_{H^{r-3}} 
                \leq c_r\|u+w\|_{H^s}\|v\|_{H^r}.
\end{equation*}
Substituting the above estimates into \eqref{term2} and using the solution-size estimate, we have 
\begin{equation}\label{term2lem2est}
    \left|\int \lambda^{r-4}\partial_x\left[\lambda^2v(\lambda^2(u+w) + \lambda^2v_x(\lambda^2(u+w)_x\right]\lambda^{r}vdx\right| 
         \leq  c_{r,\rho}\|v\|_{H^r}^2
\end{equation}
If instead $1 \leq r \leq 3,$ Lemma \ref{lemma3} gives 
\begin{equation*} 
        \left\|\lambda^2v(\lambda^2(u+w))\right\|_{H^{r-3}} 
            \leq c_{r,s}\left\|\lambda^2(u+w)\right\|_{H^{s-3}}\|\lambda^2v\|_{H^{r-3}} 
                \leq c_{r,s}\|u+w\|_{H^s}\|v\|_{H^r},
\end{equation*}
and 
\begin{equation*} 
        \left\|\lambda^2v_x(\lambda^2(u+w)_x)\right\|_{H^{r-3}} 
            \leq c_{r,s}\left\|\lambda^2(u+w)_x\right\|_{H^{s-3}}\|\lambda^2v_x\|_{H^{r-3}} 
                \leq c_{r,s}\|u+w\|_{H^s}\|v\|_{H^r}.
\end{equation*}
Again, substituting into \eqref{term2} and using the solution-size estimate yields 
\begin{equation}\label{term2lem3est}
    \left|\int \lambda^{r-4}\partial_x\left[\lambda^2v(\lambda^2(u+w) + \lambda^2v_x(\lambda^2(u+w)_x\right]\lambda^{r}vdx\right| 
         \leq  c_{r,s,\rho}\|v\|_{H^r}^2.
\end{equation}
Combining estimates \eqref{1stterm}, \eqref{term2lem2est} and \eqref{term2lem3est}, we arrive at the following bound
\begin{equation}\label{finalenergy}
    \frac{1}{2}\frac{d}{dt}\|v\|^2_{H^r} \leq c_{r,s,\rho}\|v\|^2_{H^r}.
\end{equation}
Solving \eqref{finalenergy} yields 
\begin{equation}
    \|v(t)\|_{H^r} \leq e^{c_{r,s,\rho}T}\|v_0\|_{H^r},
\end{equation}
and so we have Lipschitz continuity in $A_1.$ \\

\textbf{H\"older continuity in $A_2.$}
Using the Lipschitz continuity in $A_1$ and the fact that $r \leq 6-s$ in $A_2$, we may write
\begin{equation}
    \|v(t)\|_{H^r} \leq \|v(t)\|_{H^{6-s}} \leq c_{r,s,\rho}\|v_0\|_{H^{6-s}}.
\end{equation}
Interpolating between the $H^r$ and $H^s$ norms then gives 
\begin{equation}
    \|v(t)\|_{H^r} \leq c_{r,s,\rho}\|v_0\|_{H^s}^{\frac{6-s-r}{s-r}}\|v_0\|_{H^r}^{\frac{2(s-3)}{s-r}} \leq c_{r,s,\rho}\|v_0\|_{H^r}^{\frac{2(s-3)}{s-r}}.
\end{equation}
Thus we have H\"older continuity in $A_2.$ \\ 

\textbf{H\"older continuity in $A_3.$} 
Since $s-1 \leq r \leq s$, we may interpolate between the $H^s$ and $H^{s-1}$ norms to get 
\begin{equation}
    \|v(t)\|_{H^r} \leq  \|v(t)\|_{H^{s-1}}^{s-r}\|v(t)\|_{H^s}^{r-s+1}.
\end{equation}
Since the solution size estimate gives 
\begin{equation}
    \|v(t)\|_{H^s} \lesssim \|u_0\|_{H^{s}} + \|w_0\|_{H^s} \lesssim \rho,
\end{equation}
we additionally have 
\begin{equation}
   \|v(t)\|_{H^r} \leq c_{\rho} \|v(t)\|_{H^{s-1}}^{s-r}.
\end{equation}
Therefore, by the Lipschitz continuity in $A_1$ and since $s-1 \leq r,$
\begin{equation}
    \|v(t)\|_{H^r} 
        \leq c_{\rho} \|v(t)\|_{H^{s-1}}^{s-r} 
            \leq c_{r,s,\rho} \|v_0\|_{H^r}^{s-r},
\end{equation}
and so we have the claimed  H\"older continuity in $A_3.$  

{\bf Acknowledgments.}
John Holmes thanks the NSF for support under grant DMS-2247019.

\end{document}